\numberwithin{equation}{section}
\newcommand{\nc}{\newcommand}
\nc{\parent}[1]{$[\![#1]\!]$}
\newtheorem{theorem}{Theorem}[section]
\newtheorem{lemma}{Lemma}[section]
\newtheorem{example}{Example}[section]
\newtheorem{corollary}{Corollary}[section]
\newtheorem{proposition}{Proposition}[section]
\newtheorem{remark}{Remark}[section]
\newtheorem{definition}{Definition}[section]
\newtheorem{assumption}{Assumption}[section]
\newenvironment{pf-main}{{\sc Proof of Theorem \ref{mainresult}.}\hspace{3mm}}{\qed}
\DeclareMathOperator{\sgn}{sgn}
\DeclareMathOperator{\interior}{int}
\nc{\cadlag}{c\`{a}dl\`{a}g } \nc{\ba}{\begin{array}}
\nc{\ea}{\end{array}} \nc{\be}{\begin{equation}}
\nc{\ee}{\end{equation}} \nc{\bea}{\begin{eqnarray}}
\nc{\eea}{\end{eqnarray}} \nc{\bean}{\begin{eqnarray*}}
\nc{\eean}{\end{eqnarray*}} \nc{\bu}{\bullet} \nc{\nn}{\nonumber}
\nc{\cA}{{\mathcal A}} \nc{\cB}{{\mathcal B}} \nc{\cC}{{\mathcal
C}} \nc{\cD}{{\mathcal D}} \nc{\bbD}{\mathbb{D}}
\nc{\cG}{{\mathcal G}} \nc{\cF}{{\mathcal F}} \nc{\cS}{{\mathcal
S}} \nc{\cU}{{\mathcal U}} \nc{\cH}{{\mathcal H}}
\nc{\cK}{{\mathcal K}}\nc{\cL}{{\mathcal L}}  \nc{\cM}{{\mathcal
M}} \nc{\cO}{{\mathcal O}} \nc{\cP}{{\mathcal P}}
\nc{\bbE}{\mathbb{E}} \nc{\tbA}{\tilde{\bbA}}\nc{\bbA}{\mathbb{A}}\nc{\bbF}{\mathbb{F}}
\nc{\bbEQ}{\mathbb{E}_{\mathbb{Q}}} \nc{\eps}{\varepsilon}
\nc{\bbEP}{\mathbb{E}_{\mathbb{P}}}\nc{\bbL}{\mathbb{L}}
\nc{\what}{\widehat} \nc{\bbP}{\mathbb{P}} \nc{\bbQ}{\mathbb{Q}}
\nc{\del}{\partial} \nc{\Om}{\Omega} \nc{\om}{\omega}
\nc{\bbR}{\mathbb{R}} \nc{\bbN}{\mathbb{N}} \nc{\fps}{$(\Om, \cF,
(\cF_t)_{t\geq 0}, \bbP)$} \nc{\bbC}{\mathbb{C}}
\nc{\bfr}{\begin{flushright}} \nc{\efr}{\end{flushright}}
\nc{\dXt}{\Delta X_{t}} \nc{\dXs}{\Delta X_{s}}
\nc{\bs}{\blacksquare} \nc{\dX}{\Delta X} \nc{\dY}{\Delta Y}
\nc{\dnkx}{\left(X(T^{n}_{k})-X(T^{n}_{k-1})\right)}
\nc{\esssup}{\mathrm{ess}\mbox{ }\mathrm{sup}}
\nc{\essinf}{\mathrm{ess}\mbox{ } \mathrm{inf}}
\nc{\dhats}{\widehat{\delta_s}} \nc{\half} {\frac{1}{2}}
\nc{\ol}{\overline}
\def\rar{\rightarrow}
\nc{\chf}{\mbox{$\mathbf1$}}
\begin{document}

\title{Diffusion transformations, Black-Scholes equation and optimal stopping}
\author{Umut \c{C}etin}
\address{Department of Statistics, London School of Economics and Political Science, 10 Houghton st, London, WC2A 2AE, UK}
\email{u.cetin@lse.ac.uk}
\date{\today}
\begin{abstract}
We develop a new class of path transformations for one-dimensional diffusions that are tailored to alter their long-run behaviour from transient to recurrent or vice versa. This immediately leads to a formula for the  distribution of the first exit times of diffusions, which is recently characterised by Karatzas and Ruf \cite{KR} as the minimal solution of an  appropriate Cauchy problem under more stringent conditions. A particular limit of these transformations also turn out to be instrumental in characterising the stochastic solutions of Cauchy problems defined by the generators of strict local martingales, which are well-known for not having unique solutions even when  one restricts solutions  to have  linear growth. Using an appropriate diffusion transformation we show that the aforementioned stochastic solution can be written in terms of the unique classical  solution of an {\em alternative} Cauchy problem with suitable boundary conditions. This in particular  resolves the long-standing issue of non-uniqueness with the Black-Scholes equations in derivative pricing in the presence of {\em bubbles}.  Finally, we use these path transformations to propose a unified framework for solving explicitly the optimal stopping problem for one-dimensional diffusions with discounting, which in particular is relevant for the pricing and the computation of optimal exercise boundaries of perpetual American options.
\end{abstract}
\maketitle

\section{Introduction} 
 Conditioning the paths of a given Markov process $X$ to stay in a certain subset of the path space is a well-studied subject which has become synonymous with the term $h$-transform.  If one wants to condition the paths of $X$ to stay in a certain set, the classical recipe consists of finding an appropriate excessive function $h$,  defining the transition probabilities of the conditioned process via  $h$, and constructing on the canonical space a Markov process $X^h$ with these new transition probabilities. This procedure is called an $h$-transform. In particular if $h$ is a minimal excessive function with a pole at $y$ (see Section 11.4 of \cite{ChungWalsh} for definitions), then $X^h$ is the process $X$ conditioned to converge to $y$ and {\em killed} at its last exit from $y$. We  refer the reader to Chapter 11 of \cite{ChungWalsh} for an in-depth analysis of $h$-transforms.
 
  This paper proposes a new class of path transformations for  one-dimensional regular diffusions with stochastic differential equation (SDE) representation. The new transformations are aimed at switching the behaviour of the diffusion  from transient to recurrent or vice versa.   We introduce the concept of  {\em recurrent transformation} 
 in Section \ref{s:rectr} and characterise these transforms via weak solutions of SDEs. Roughly speaking, a recurrent transformation adds a drift term to the original SDE of $X$ so that the resulting process is a {\em recurrent} regular  diffusion with the same state space whose law is locally absolutely continuous with respect to the original law.  Although the recurrent transformation is  at first sight meaningful only for transient diffusions, we note a special class of recurrent transformations  in Theorem \ref{t:prctr} that is applicable not only to transient diffusions but also to recurrent ones. This transform, by adding again a certain drift, results in a {\em positively recurrent} diffusion. For example, this transformation turns a standard Brownian motion to a Brownian motion with alternating drift, which appears in the studies of the bang-bang control problem (see Example \ref{ex:BMalt}). 

As a first application of the recurrent transformation, we compute in   Corollary \ref{c:survfunction} the distribution of the  first exit time from an interval for a given diffusion.  Although the formula does not provide an expression in closed form in general,  a simple Monte Carlo algorithm will provide a sufficiently close estimate. 

The distribution of first exit times has attracted the attention of researchers working on problems arising in the Monte Carlo simulation of stochastic processes (see, e.g., \cite{baldi1995}, \cite{BCI}, \cite{GobetKilled}, \cite{GMkilled}, and the references therein). Yet precise formulas for the distribution of  exit times of diffusions  have rarely been the subject of a thorough investigation. The recent paper of Karatzas and Ruf \cite{KR} seems to be the only work in the literature that addresses this problem in the general framework of one-dimensional diffusions. With an additional assumption on the local H\"older continuity of the coefficients of the SDE satisfied by $X$ they have shown that the distribution function of the first exit time was the minimal nonnegative solution of a particular Cauchy problem. Although this is a useful characterisation from a theoretical perspective, finding the smallest solution of a Cauchy problem is in general not a feasible numerical task. Our formula in Corollary \ref{c:survfunction} thus provides a way of computing the minimal solutions of the class of Cauchy problems considered by Karatzas and Ruf.

As described briefly in Remark \ref{r:simulation} recurrent transformations can also  be used to improve the accuracy of discrete Euler approximations of a diffusion killed when exiting a bounded interval. As shown by Gobet \cite{GobetKilled} the discretisation error for such Euler schemes is of order $N^{-\half}$, where $N$ is the number of discretisations, as opposed to $N^{-1}$, which is the rate of convergence for discrete Euler schemes for diffusions without killing. As the recurrent transformation removes the killing by passing to a locally absolutely continuous probability measure, it can be used to bring the convergence rate back to $N^{-1}$ using the recipe in Remark \ref{r:simulation}. This important application of recurrent transformations will be  studied rigorously in a subsequent paper.

Section \ref{s:limit} is devoted to the convergence of  certain recurrent transforms when $X$ is nonnegative and on natural scale. Under a mild condition on the diffusion coefficient of $X$ we show that a particular sequence of recurrent transformations converges monotonically to the $h$-transform of $X$, where $h(x)=x$. We observe that the nature of this convergence depends crucially on whether $X$ is a strict local martingale or not. In particular,  we construct on a single probability space a sequence of recurrent transforms that increases a.s. to a diffusion that has the same law as the aforementioned $h$-transform. The limiting diffusion is non-exploding on $[0,\infty)$ if and only if $X$ is a true martingale. 

Our interest in local martingales in fact stems from the financial models with bubbles. If a financial model admits no arbitrage opportunities,  the discounted stock price $X$ must follow a nonnegative local martingale under a so-called risk-neutral measure by the Fundamental Theorem of Asset Pricing \cite{DS}. When $X$ is not a martingale but a strict local martingale, the stock price exhibits a bubble and many results in the arbitrage pricing theory become invalid (see \cite{CH} and \cite{PP} for some examples). One particular issue concerns the Black-Scholes pricing equation for a European option that pays the amount of $g(X_T)$ to its holder at time $T$ for some $g \geq 0$. The arbitrage pricing theory suggests that the fair price of this option at time $t$ is $v(T-t, X_t)$, where $v(t,x):=E^x[g(X_t)]$. Under mild conditions on $X$ and a continuity and linear growth assumption on $g$, Ekstr\"om and Tysk \cite{ET} have shown that $v$  satisfies the Cauchy problem
\be \label{e:cauchyi}
u_t=Au, \qquad u(0,\cdot)=g,
\ee
where $A$ is the infinitesimal generator of $X$. As a consequence, Ekstr\"om and Tysk have  observed in \cite{ET} that (\ref{e:cauchyi}) admits multiple nonnegative solutions when $X$ is a strict local martingale and $g(x)=x$. Namely, they have identified $u(t,x):=x$ and $v(t,x)=E^x[X_t]$ as such two distinct  solutions. Note that $X$ being a strict local martingale implies $v(t,x)=E^x[X_t]<x=u(t,x)$. Thus, $x-E^x[X_t]$ is a solution of (\ref{e:cauchyi}) when $g\equiv 0$.  However, this immediately leads to the conclusion that there are infinitely many solutions of at most linear growth to (\ref{e:cauchyi}) whenever $g$ is of at most linear growth. Indeed, by the above discussion  for any $\alpha >0$, $\tilde{u}(t,x):=E^x[g(X_t)] + \alpha (x-E^x[X_t])$ is a nonnegative solution of (\ref{e:cauchyi}) when $g$ is of at most linear growth. Moreover, Ekstr\"om and Tysk have also  shown that $E^x[g(X_t)]$ is of at most linear growth when $g$ is continuous function of at most linear growth.  This in turn renders $\tilde{u}$  of linear growth.  Hence, restricting solutions to have at most linear growth does not yield uniqueness for the above Cauchy problem.  

 Bayraktar and Xing \cite{BX} have followed up this question by showing that the uniqueness of the Cauchy problem is determined by the martingale property of $X$.  Later, Bayraktar et al. \cite{BKX} have extended the scope of these conclusions to Markovian stochastic volatility models. 

The absence of uniqueness for solutions of (\ref{e:cauchyi}) is  especially problematic if one wants to compute the option prices by solving (\ref{e:cauchyi}) numerically.    Also note that one will also fail to compute $E^x[g(X_t)]$ using a Monte-Carlo simulation when $g$ is of linear growth and $X$ is a strict local martingale. Indeed, if, e.g.,  $g(x)=x$, the Monte-Carlo algorithm will yield $x$ for $E^x[X_t]$ since the discretisation of $X$ via the Monte-Carlo scheme will result in a {\em true} martingale for the approximating process. To resolve this issue  we establish in Section \ref{s:BS} a new characterisation of $E^x[g(X_t)]$   in terms of the unique solution of an alternative Cauchy problem. We show that the function $(t,x) \mapsto E^x[g(X_t)]$, after an appropriate scaling, becomes the unique solution of 
\[
w_t= \tilde{A}w
\]
with certain initial and boundary conditions, when $\tilde{A}$ is the generator of a suitable $h$-transform of $X$. More precisely, this $h$-transform coincides with the  one that is obtained as the limit of recurrent transforms in Section \ref{s:limit}. One interesting corollary of the main result of this section  is that for any $t>0$ the valuation function $E^x[g(X_t)$ is of strictly sublinear growth at $\infty$ when $g$ is of at most linear growth and the stock price is given by a strict local martingale. In particular, $\lim_{x \rar \infty}\frac{E^x[X_t]}{x}=0$ for any $t>0$ if $X$ is a strict local martingale.

While Section \ref{s:BS} is on the valuation of European options, Section \ref{s:OS} considers the pricing of perpetual American options. In order to price such an option with payoff $g$, one needs to solve the optimal stopping problem
\[
\sup_{\tau \leq \zeta}E^x\left[e^{-\lambda \tau}g(X_{\tau})\right],
\]
where $\zeta$ is the (possibly finite) lifetime of the diffusion $X$ and the {\em discount rate} $\lambda>0$ corresponds to the constant interest rate.

Peskir and Shiryaev \cite{PS} give an excellent survey of available methods to tackle this problem. One approach to the above consists of solving  a free boundary problem associated to the infinitesimal generator of $X$. Another approach is via the characterisation of $\lambda$-excessive functions of $X$ as the value function for the optimal stopping problem is the least $\lambda$-excessive majorant of $g$. This is the path taken by Dayanik and Karatzas in \cite{DK}.  Beibel and Lerche \cite{BL} have also proposed a new methodology based on simple martingale arguments, which can also be interpreted as change of measure arguments as observed by \cite{LU}. While the approach based on the solution of a free boundary problem rarely provides explicit solutions, the other two have the potential to offer explicit or semi-explicit solutions. However, these solutions  crucially depends on the assumption that one has the solutions of a family of Sturm-Liouville equations at hand. Moreover, the solution techniques offered in \cite{DK} and \cite{BL} differ for different boundary behaviour exhibited by $X$,  i.e. whether the boundaries  of the state space of $X$ are absorbing or natural, etc. Furthermore, how the function $g$ behaves near the boundaries also matters. For instance, Beibel and Lerche \cite{BL} have to check five conditions on the behaviour of $g$ to determine the solution. It is also worth to note the recent work of  Lamberton and Zervos \cite{LZ} who analyse a large class of optimal stopping problems via variational equalities defined by the generator of $X$ and $g$ without the assumption that $g$ is continuous. 

Section \ref{s:OS} presents a unified solution to the above optimal stopping problem that does not vary depending on the behaviour of $g$ or $X$ near the boundaries. We use the specific recurrent transform of Proposition \ref{p:AOtr}, which is applicable to transient as well as recurrent diffusions, to determine whether the value function is finite. We show that  the value function is finite if and only if  $g$ satisfies the single condition (\ref{e:nc0}), which depends only on the knowledge of $u^{\lambda}(\cdot, y)$, the $\lambda$-potential density, for some $y$. This recurrent transform also changes the optimal stopping problem to one without discounting. However, the new problem becomes two-dimensional. In order to reduce the dimension of the problem to one, we apply the transformation that is defined in Section \ref{s:another}, which is aimed at conditioning the recurrent transformation to have a certain behaviour at the boundary points and become transient. After this transformation all that remains to do is to solve 
\[
\sup_{\tau}\tilde{E}^x[\bar{g}(X_{\tau})],
\]
where $\bar{g}$ is a function that depends only on $g$ and $u^{\lambda}(\cdot,y)$, and $\tilde{E}$ corresponds to the expectation operator with respect to the law of the diffusion after the final transformation. Solution to the above is easy and well-known since Dynkin \cite{Dyn}: After a change of scale, the value function of the above optimal stopping problem is the smallest concave majorant of $\bar{g}$. 

It has to be noted that Cisse et al. \cite{CPT} have attacked this problem using $h$-transforms. However, as we explain in detail in Remark \ref{r:cetal} the authors make some implicit assumptions regarding the boundary behaviour of $X$ as well as the function $g$ in the proof of their key arguments. These assumptions in particular exclude the diffusion processes with infinite lifetime. As we mentioned above, our approach is general and do not impose any conditions on $X$ other than the regularity and the Engelbert-Schmidt conditions that ensures an SDE representation for $X$.

In essence our framework is fundamentally different in spirit from \cite{CPT} and \cite{BL} in the sense that it gives a probabilistic interpretation of the value function and the optimal stopping boundaries under a locally absolutely continuous measure in the classical framework of Dynkin \cite{Dyn} with no discounting. The works of \cite{CPT} and \cite{BL}, on the other hand, obtain the solution by a clever algorithm of maximisation provided one has the solutions of a family of Sturm-Liouville equations.

Differently from our treatment in Section \ref{s:BS} we do not investigate the impact of martingale property of $X$ on the valuation of perpetual American options as the methodology is the same for the martingales as well as the local martingales. We refer the reader to \cite{BKX2} for a thorough analysis of the influence of the martingale property in a general framework. 

An outline of this paper is as follows. Section \ref{s:prelim} gives a brief overview of several concepts related to one-dimensional diffusions that will be used throughout the paper. Section \ref{s:rectr} introduces the concept of recurrent transformations while Section \ref{s:limit} considers their limit in relation to the local martingale property of $X$. Section \ref{s:another} defines a transform designed specifically  for recurrent diffusions that is different than the typical $h$-transform but  will still render them transient, which will be useful in Section \ref{s:OS}.  Section \ref{s:BS} provides a resolution to the non-uniqueness issue of the Black-Scholes pricing equation and Section \ref{s:OS} addresses the optimal stopping problem. Section \ref{s:conc} concludes.  Proofs of certain results that are not contained in the main body is included in the Appendix. 

{\bf Acknowledgements:} I'd like to thank Johannes Ruf for the useful discussions  and the anonymous referees for their comments that led to several improvements.
\section{Preliminaries} \label{s:prelim}
Let $X$ be a regular  diffusion on $(l,r)$, where $ -\infty \leq l <r \leq \infty$.  We assume that if any of the boundaries are reached in finite time, the process is absorbed at that boundary. This is the only instance when the process can be `killed', we do not allow killing inside $(l,r)$. Such a diffusion is uniquely characterised by its scale function $s$ and speed measure $m$, defined on the Borel subsets of the open interval $(l,r)$. The set of points that can be reached in finite time starting from the interior of $(l,r)$ and the entrance boundaries will be denoted by $I$. That is, $I$ is the union of $(l,r)$ with the regular, exit or entrance boundaries.  The law induced on $C(\bbR_+,I)$,  the space of $I$-valued continuous functions on $[0,\infty)$, by $X$ with $X_0=x$ will be denoted by $P^x$ as usual, while $\zeta$ will correspond to its lifetime, i.e. $\zeta :=\inf\{t>0:X_{t} \in \{l,r\}\}$. For concreteness we assume that  $X$ is the coordinate process on the canonical space $\Om:=C(\bbR_+,I)$, i.e. $X_t(\om)=\om(t)$ for all $t\geq 0$. However,  this assumption is only for convenience and one can work with other measurable spaces as long as the measures $(P^x)_{x \in I}$ are properly defined. The filtration $(\cF_t)_{t \geq 0}$ will correspond to the universal completion of the natural filtration of $X$ and, therefore, is right continuous since $X$ is strong Markov by definition (see Theorem 4 in Section 2.3 in \cite{ChungWalsh}). We will also set $\cF:=\bigvee_{t \geq 0} \cF_t$. If $\mu$ is a measure on some open interval $(a,b)$ and $f$ is a nonnegative or $\mu$-integrable measurable function, the integral of $f$ with respect to $\mu$ will be denoted by $\int_{(a,b)}f(x)\mu(dx)$ unless $\mu$ is absolutely continuous with respect to the Lebesgue measure $dx$,  in which case we shall write $\int_a^b f(x)\mu(dx)$.

In what follows we will often replace $\zeta$ with $\infty$ when dealing with the limit values of the processes as long as no confusion arises.  Recall that in terms of the first hitting times,   $T_y:=\inf\{t> 0: X_t=y\}$ for $y \in (l,r)$, the regularity amounts to $P^x(T_y<\infty)>0$ whenever $x$ and $y$ belongs to the open interval $(l,r)$. This assumption entails in particular that $s$ is strictly increasing and continuous (see Proposition VII.3.2 in \cite{RY}) and $0<m((a,z))<\infty$ for all $l<a<z<r$ (see Theorem VII.3.6 and the preceding discussion in \cite{RY}). 

Recurrence or transience of $X$ depends on the behaviour of $s$ near the boundary points. More precisely, $X$ is transient if and only if at least one of $s(l)$ and $s(r)$ is finite. Since $s$ is unique only up to an affine transformation, we will use the following convention throughout the text: 
\begin{itemize}
	\item $s(l)=0$ whenever finite,
	\item  $s(r)=1$ whenever finite.
\end{itemize} 
Note that in view of our foregoing assumptions one can easily deduce that $X_{\zeta -}\in \{l,r\}$ when $X$ is transient. We refer the reader to \cite{BorSal} for a summary of results and references on one-dimensional diffusions. The definitive treatment of such diffusions is, of course, contained in \cite{IM}. The recent manuscript of Evans and Hening \cite{EH} contains a detailed discussion with proofs of some aspects of the potential theory of one-dimensional diffusions.
\begin{remark} It has to be noted that notion of recurrence that we consider here excludes some recurrent solutions of one-dimensional SDEs with time-homogeneous coefficients since we kill our diffusion as soon as it reaches a regular boundary point. A notable example is a squared Bessel process with  dimension $\delta <2$, which solves the following SDE:
\[
X_t= x+2 \int_0^t\sqrt{X_s}dB_s +\delta t.
\]
The above SDE has a global strong solution, i.e. solution for all $t \geq 0$, which is recurrent (see Section XI.1 of \cite{RY}). However, the point $0$ is reached a.s. and is instantaneously reflecting by Proposition XI.1.5 in  \cite{RY}. As such, it violates our assumption of a diffusion being killed at a regular boundary. According to  our assumption, a squared Bessel process of dimension $0<\delta<2$ has to be killed as soon as it reaches $0$ and, thus, is a {\em transient} diffusion. 
\end{remark}
As our focus is on diffusions that are also solutions of SDEs, we further impose the so-called {\em Engelbert-Schmidt conditions}. That is, we shall assume the existence of measurable functions $\sigma:(l,r)\to \bbR$ and $b: (l,r)\to \bbR$ such that 
\be \label{e:ESR}
\sigma(x) >0  \mbox{ and } \exists \eps >0 \mbox{ s.t. } \int_{x-\eps}^{x+\eps}\frac{1+ |b(y)|}{\sigma^2(y)}dy <\infty \mbox{ for any $x \in (l,r)$.}
\ee
Under this assumption (see \cite{ES} or Theorem 5.5.15 in \cite{KS}) there exists a {\em unique} weak solution (up to the exit time from the interval $(l,r)$) to the SDE
\be \label{e:sdeX}
X_t=x+\int_0^t\sigma(X_s)dB_s + \int_0^t b(X_s)ds, \qquad t < \zeta,
\ee
where $\zeta=\inf\{t> 0: X_{t}\in \{l,r\}\}$ and $l<x<r$.  
Moreover,  condition (\ref{e:ESR}) further implies one can take
\be \label{e:scalespeed}
s(x)=\int_C^x \exp\left(-2 \int_c^z \frac{b(u)}{\sigma^2(u)}du\right)dz
\; \mbox{ and }\;  m(dx)=\frac{2}{s'(x)\sigma^2(x)}dx, \mbox{ for some } (c,C) \in (l,r)^2.
\ee
We collect the assumptions on $X$ in the following:
\begin{assumption} \label{a:reg} $X$ is a regular one-dimensional diffusion on $(l,r)$ such that
\[
X_t=X_0+ \int_0^t\sigma(X_s)dB_s + \int_0^t b(X_s)ds, \qquad t < \zeta,
\]
where   $\sigma:(l,r)\to \bbR$ and $b: (l,r)\to \bbR$ satisfy (\ref{e:ESR}), $\zeta=\inf\{t>0: X_{t}\in \{l,r\}\}$.
\end{assumption}

In the sequel the {\em extended generator} of $X$ will be denoted by $\bbA$. Following Definition VII.1.8 of Revuz and Yor \cite{RY} we will write $g=\bbA f$ for a given Borel measurable function $f$, if there exists Borel function $g$ such that, for each $x \in I$, i) $P^x$-a.s. $\int_0^t|g(X_s)|ds<\infty$ for every $t>0$, and ii)
\[
f(X_t)-f(X_0)-\int_0^t g(X_s)ds
\] 
is $P^x$-local martingale. In this case $f$ is said to be in the domain of $\bbA$. If $f$ is $C^2$ on $I$, then $\bbA$ becomes a second order differential operator, i.e.
\[
\bbA f(x)=\half\sigma^2(x)f''(x)+b(x)f'(x).
\]

 Any regular transient diffusion on $(l,r)$ has a finite potential density, $u: (l,r)^2 \to \bbR_+$, with respect to its speed measure (see Paragraph 11 in Section II.1 of \cite{BorSal}). That is, for any nonnegative and measurable  $f$ vanishing at accessible boundaries
\[
Uf(x):=\int_0^{\infty}E^x[f(X_t)]dt=\int_l^r f(y)u(x,y)m(dy).
\]
The above implies that the potential density can be written in terms of the transition density\footnote{For the existence of this transition density and its boundary behaviour see Mc Kean \cite{MKpar}. }, $(p(t,\cdot,\cdot))_{t \geq 0}$, of $X$ with respect to its speed measure:
\[
u(x,y)=\int_0^{\infty}p(t,x,y)dt.
\]
The above in particular implies that $u(x,y)=u(y,x)$ since $p(t,\cdot,\cdot)$ is symmetric for each $t>0$ (see p. 520 of \cite{MKpar}). If $X$ is recurrent , either  $Uf\equiv \infty$ or $Uf\equiv 0$ (see Theorem 1 in Section 3.7 of \cite{ChungWalsh}). Therefore, potential density only makes sense for transient diffusions.

We will denote by $({L}_t^x)_{x \in (l,r)}$  the family of {\em semimartingale} local times\footnote{Observe that the {\em diffusion} local time, $\tilde{L}$, in Paragraph 13 in Section II.2 of \cite{BorSal} is defined via $\int_0^t f(X_s)ds =\int_{l}^{r}f(x)\tilde{L}^x_t m(dx)$. Comparing this with the occupation times formula for the semimartingale local time reveals the relationship $\frac{2}{s'(x)}\tilde{L}^x=L^x$.} associated to $X$. Recall that the occupation times formula for the semimartingale local time is given by
\[
\int_0^t f(X_s)\sigma^2(X_s)ds =\int_{l}^{r}f(x)L^x_t dx.
\]

In the case of one-dimensional transient diffusions  the  distribution of $L^y_{\infty}$ is known explicitly in terms of the potential density (see p.21 of \cite{BorSal}). In particular, 
\be \label{e:lawLT}
P^y(L^y_{\infty}>t)=\exp\left(-\frac{s'(y)t}{2u(y,y)}\right).
\ee
%

Note that if $s(l)=0=1-s(r)$, then $P^x(X_{\infty}=r)=s(x)=1-P^x(X_{\infty}=l)$ and
\be \label{e:psiff}
P^x(T_y<\infty)= \left\{\ba{rl}
\frac{s(x)}{s(y)}, & y\geq x; \\
\frac{1-s(x)}{1-s(y)}, & y < x.
\ea \right . \qquad \qquad u(x,y)= s(x)(1-s(y)), \; x \leq y.
\ee
On the other hand, if $s(l)=0$ and $s(r)=\infty$, then $X_t \rar l$, $P^x$-a.s. for any $x \in (l,r)$, which in turn implies
\be \label{e:psifi}
 P^x(T_y<\infty))= \left\{\ba{rl}
\frac{s(x)}{s(y)}, & y\geq x; \\
1, & y < x.
\ea \right . \qquad \qquad u(x,y)= s(x), \; x \leq y.
\ee
Similarly, if $s(l)=-\infty$ and $s(r)=1$, then $X_t \rar r$, $P^x$-a.s. for any $x \in (l,r)$, and 
\be \label{e:psiif}
P^x(T_y<\infty)= \left\{\ba{rl}
1, & y\geq x; \\
\frac{1-s(x)}{1-s(y)}, & y < x.
\ea \right . \qquad \qquad u(x,y)= 1-s(y), \; x \leq y.
\ee
While the potential density is finite only for transient diffusions, one can define a so-called {\em $\alpha$-potential density} that exists and is finite for all diffusions for all $\alpha>0$. For any nonnegative and measurable function $f$ vanishing at accessible boundaries, one defines
\[
U^{\alpha}f(x):=\int_0^{\infty}e^{-\alpha t} E^x[f(X_t)]dt.
\]
Thus, if we let
\[
u^{\alpha}(x,y):=\int_0^{\infty}e^{-\alpha t} p(t,x,y)dt,
\]
we obtain
\[
U^{\alpha}f(x)=\int_l^r f(y)u^{\alpha}(x,y)m(dy).
\]
$u^{\alpha}(\cdot,\cdot)$ is called the $\alpha$-potential density and is symmetric in  $(l,r)^2$ for all $\alpha>0$. An alternative and very useful expression for $u^\alpha$ is given in terms of the fundamental solutions of the equation $\bbA f =\alpha f$. That is,
\be \label{e:ualpha}
u^{\alpha}(x,y)=\frac{\psi_{\alpha}(x)\phi_{\alpha}(y)}{w_{\alpha}}, \qquad x \leq y,
\ee
where $\psi_{\alpha}$ and $\phi_{\alpha}$ are, respectively, the increasing and decreasing nonnegative solutions of $\bbA f=\alpha f$ subject to certain boundary conditions (see p.19 of \cite{BorSal}), and $w_{\alpha} $ is the Wronskian given by
\[
w_{\alpha}=\frac{\psi'_{\alpha}(x)\phi_{\alpha}(x)-\psi_{\alpha}(x)\phi'_{\alpha}(x)}{s'(x)},
\]
which is independent of $x$. Consequently, using the relationship between the fundamental solutions of $\bbA f=\alpha f$ and the Laplace transforms of hitting times (see p.18 of \cite{BorSal}), we have
\be \label{e:LThittime}
E^x\left[\exp\left(-\alpha T_y\right)\right]=\frac{u^{\alpha}(x,y)}{u^{\alpha}(y,y)}.
\ee
We refer the reader to Chap.~II of Borodin and Salminen \cite{BorSal} for a summary of results concerning one-dimensional diffusions including the ones sketched above. 
\section{Recurrent transformations of  diffusions} \label{s:rectr}
This section introduces a new kind of path transformation for regular diffusions that produces a recurrent diffusion whose law is locally  absolutely continuous with respect to that of the original diffusion.  To wit, suppose  $h$ is a non-negative $C^2$-function and $M$ an adapted continuous process of finite variation so that $h(X)M$ is a non-negative local martingale. If $(\tau_n)$ is a localising sequence for this local martingale, using Girsanov's theorem we arrive at a weak solution on $[0,\tau_n]$ to the following SDE for any given $x \in (l,r)$:
\be \label{e:sderectr}
X_t=x+ \int_0^t \sigma(X_s)dB_s +\int_0^t \left\{b(X_s) + \sigma^2(X_s)\frac{h'(X_s)}{h(X_s)}\right\}ds.
\ee
We can associate to the above SDE the scale function 
\be \label{e:recscale}
s_h(x):=\int_c^x \frac{s'(y)}{h^2(y)}dy, \qquad x \in (l,r),
\ee
provided that the integral is finite for all $x \in (l,r)$, which in particular requires $h>0$ on $(l,r)$. What we would like to achieve is to extend this procedure by taking $n \rar \infty$ and obtain a recurrent diffusion. The latter will require  $-s_h(l+)=s_h(r-)=\infty$. We shall see in this section that this property alone is sufficient to obtain a {\em recurrent} weak solution of (\ref{e:sderectr}) on $[0,\infty)$ under some mild conditions on $h$.

Using $h$ and $M$ to get a recurrent process imposes some boundary conditions on $h$. Indeed, if $s(l)=0$ (resp. $s(r)=1$), in order to have $s_h(l+)=-\infty$ (resp. $s_h(r-)=\infty$), we must have $\lim_{x \rar l}h(x)=0$ (resp. $\lim_{x \rar r}h(x)=0$). 

Moreover, since $h(X)M$ is a local martingale,  $dM_t= -M_t\frac{\bbA h(X_t)}{h(X_t)}dt$. Thus, $M$  is  given by
\[
M_t=\exp\left(-\int_0^t \frac{\bbA h(X_s)}{h(X_s)}ds\right).
\]

In the light of the above discussion we  now introduce the concept of a {\em recurrent transformation of a diffusion}.
\begin{definition} Let $X$ be a regular  diffusion satisfying  Assumption \ref{a:reg} and  $h:I\to [0,\infty)$ be an absolutely continuous function. Then, $(h,M)$ is said to be a recurrent transform (of $X$) if the following are satisfied:
\begin{enumerate}
\item $M$ is an adapted process of finite variation.
\item $h(X)M$ is a nonnegative local martingale. 
\item The function $s_h$ from (\ref{e:recscale}) is finite for all $x \in (l,r)$ with $-s_h(l+)=s_h(r-)=\infty$.
\item There exists a unique weak solution to (\ref{e:sderectr}) for $t \geq 0$ for any $x \in (l,r)$.
\end{enumerate}
\end{definition}
In the above definition, the defining condition for a recurrent transformation is the function $s_h$ and its explosive nature near the boundaries. The function $h$ and  the  functional $M$ come  into play when one wants to construct a weak solution of the SDE (\ref{e:sderectr}) and show that the law of its solution is locally absolutely continuous with respect to that of the original process $X$, which satisfies (\ref{e:sdeX}). The next theorem, whose proof is delegated to the Appendix, suggests a general machinery for constructing recurrent transformations.
 \begin{theorem} \label{t:rectr}  Let $X$ be a regular  diffusion satisfying Assumption \ref{a:reg}.  Consider an absolutely continuous function $h:I \to [0,\infty)$ such that its left derivative $h'$ is of finite variation. Suppose further that the mapping $s_h$ given by (\ref{e:recscale}) is finite for all $x \in (l,r)$ and that $-s_h(l+)=s_h(r-)=\infty$. Then, the following statements are valid.
 \begin{enumerate}
 \item $h'$ can be chosen to be left-continuous. Moreover, the signed measure defined by $h'$ on $(l,r)$ admits the Lebesgue decomposition $dh'(x)=h''(x)dx + n(dx)$, where $h''$ denote its Borel measurable  Radon-Nikodym derivative with respect to the Lebesgue measure on $(l,r)$, and $n$ is a locally finite signed measure on $(l,r)$ that is singular with respect to the Lebesgue measure.
 \item The integral 
\be \label{e:integrability}
 \chf_{[t<\zeta]}\left(\int_0^{t} \left|\tbA h(X_s)\right|ds + \int_l^r \frac{L^x_{t }}{2}	|n(dx)|\right)<\infty, \; P^y\mbox{-a.s.}, 
 \ee
 for every $y \in (l,r)$, where $\tbA h(x)= \frac{\sigma^2(x)}{2}h''(x)+ b(x)h'(x)$.
 \item $(h,M)$ is a recurrent transform, where, on $[t<\zeta]$,
 \bean
 M_t&:=&\exp\left(-\int_0^{t} \frac{\tbA h(X_s)}{h(X_s)}ds -\int_0^t\frac{1}{h(X_s)}d\Lambda_s(h)\right) \mbox{ and }\\
 \Lambda_t(h)&:=&\int_{(l,r)} \frac{L^x_{t }}{2}n(dx).
 \eean
 \item $\inf\{t>0:h(X_t)M_t=0\}=\zeta, \,P^x$-a.s..
 \item Let $R^{h,x}$ be the law of the solution of (\ref{e:sderectr}) and $F \in \cF_T$ for some $(\cF_t)$-stopping time $T$. Then,
 \be \label{e:ACrectr}
 R^{h,x}(F,T<\infty)=\frac{1}{h(x)}E^{x}\left[\chf_{F}h(X_T)M_T\right].
 \ee
In particular, $h(X)M$ is a $P^x$-martingale.
\item  If $T$ is an $(\cF_t)$-stopping time such that $R^{h,x}(T<\infty)=1$, then for any $ F \in \cF_T$ the following identity holds:
\be \label{e:survivalg}
P^x(\zeta>T, F)= h(x)E^{h,x}\left[\chf_F \frac{1}{h(X_T)M_T}\right],
\ee
where $E^{h,x}$ is the expectation operator with respect to the probability measure $R^{h,x}$.
\end{enumerate}
\end{theorem}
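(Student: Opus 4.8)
The plan is to invert the Radon--Nikodym relation (\ref{e:ACrectr}) of statement (5). Writing $Z_t:=h(X_t)M_t/h(x)$, the fifth statement (together with its closing assertion that $h(X)M$ is a $P^x$-martingale) says exactly that $Z$ is a nonnegative mean-one $P^x$-martingale and that $R^{h,x}(F,T<\infty)=E^x[\chf_F Z_T]$ for every $F\in\cF_T$; in other words $Z_T$ is the density of $R^{h,x}$ with respect to $P^x$ on $\cF_T\cap[T<\infty]$. The natural candidate for the \emph{reverse} density is the reciprocal $1/Z_T$, and identity (\ref{e:survivalg}) is precisely the statement that this candidate works, once one checks that $1/Z_T$ is well defined $R^{h,x}$-almost surely.

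First I would upgrade (\ref{e:ACrectr}) from indicators to arbitrary nonnegative integrands: by linearity in $\chf_F$ and monotone convergence, the identity extends to
\[
E^{h,x}\!\left[G\,\chf_{[T<\infty]}\right]=E^x\!\left[G\,Z_T\right]\qquad\text{for every nonnegative }\cF_T\text{-measurable }G,
\]
with both sides taking values in $[0,\infty]$, so that no integrability is needed a priori. Next I would record the two facts furnished by statement (4): since $\inf\{t>0:h(X_t)M_t=0\}=\zeta$ holds $P^x$-a.s., we have $\{\zeta>T\}=\{h(X_T)M_T>0\}\in\cF_T$, and on this event $h(X_T)M_T>0$, so the reciprocal $1/(h(X_T)M_T)$ is a genuine nonnegative $\cF_T$-measurable function there; note also $\{\zeta>T\}\subseteq[T<\infty]$, since $\zeta>T$ forces $T$ finite whether or not $\zeta$ is.

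Then I would apply the extended identity to $G:=\chf_F\,\chf_{[\zeta>T]}/(h(X_T)M_T)$. On the $P^x$-side the factor $Z_T$ cancels the reciprocal on $\{\zeta>T\}$, collapsing the right-hand side to
\bean
E^x\!\left[\chf_F\,\chf_{[\zeta>T]}\,\frac{1}{h(X_T)M_T}\cdot\frac{h(X_T)M_T}{h(x)}\right]
&=&\frac{1}{h(x)}E^x\!\left[\chf_F\,\chf_{[\zeta>T]}\right]\\
&=&\frac{1}{h(x)}\,P^x(\zeta>T,\,F).
\eean
On the $R^{h,x}$-side the integrand is $\chf_F\,\chf_{[\zeta>T]}\,\chf_{[T<\infty]}/(h(X_T)M_T)$. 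Here I would invoke condition (3): the explosion $-s_h(l+)=s_h(r-)=\infty$ makes the solution of (\ref{e:sderectr}) recurrent, so $\zeta=\infty$ holds $R^{h,x}$-a.s.; together with the hypothesis $R^{h,x}(T<\infty)=1$ this gives $\chf_{[\zeta>T]}\chf_{[T<\infty]}=1$ $R^{h,x}$-a.s. Hence the left-hand side equals $E^{h,x}[\chf_F/(h(X_T)M_T)]$, and multiplying through by $h(x)$ yields (\ref{e:survivalg}); as a byproduct the finiteness of $P^x(\zeta>T,F)$ shows the expectation in (\ref{e:survivalg}) is finite.

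The only genuinely delicate point, and the step I would flag as the main obstacle, is the interplay between the reciprocal and the fact that the absolute continuity is one-sided and merely \emph{local}: under $P^x$ the density $Z_T$ can vanish, precisely on $\{\zeta\le T\}$, where $1/Z_T=\infty$, so the inversion cannot be carried out $P^x$-almost surely. It is legitimate only because that degeneracy set is $R^{h,x}$-null, which is exactly the recurrence delivered by condition (3) and the positivity-up-to-$\zeta$ delivered by statement (4); keeping the indicator $\chf_{[\zeta>T]}$ throughout is what makes every quantity well defined before these two inputs are used to remove it.
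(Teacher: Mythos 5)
Your argument for identity (\ref{e:survivalg}) itself is correct, and it is essentially the paper's own argument for that item: the paper computes $E^{h,x}\left[\chf_F/(h(X_T)M_T)\right]$ by applying (\ref{e:ACrectr}) to the truncations $\chf_F\left(\frac{1}{h(X_T)M_T}\wedge n\right)$ and letting $n\rar\infty$, which is exactly your ``extend the identity to nonnegative integrands by monotone convergence, then cancel'' step carried out by hand. In both versions statement (4) is what makes the degeneracy set $\{T<\zeta\}\cap\{h(X_T)M_T=0\}$ negligible, and $R^{h,x}(\zeta=\infty)=1$ together with the hypothesis $R^{h,x}(T<\infty)=1$ removes the indicators on the $R^{h,x}$-side. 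One small imprecision: the asserted equality $\{\zeta>T\}=\{h(X_T)M_T>0\}$ can fail on $\{T=\zeta=\infty\}$, where $h(X_T)M_T$ must be read as the supermartingale limit, which need not vanish; but your cancellation only uses the inclusion $\{\zeta>T\}\subseteq\{h(X_T)M_T>0\}$ modulo $P^x$-null sets, and that does follow from (4).

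The genuine gap is one of coverage: the statement to be proved is the whole of Theorem \ref{t:rectr}, items (1)--(6), and your proposal proves only item (6) while taking items (1)--(5) as inputs. Those earlier items carry nearly all of the mathematical content of the paper's proof: (1) is the Lebesgue decomposition of the signed measure $dh'$; (2) is the $P^y$-a.s. finiteness of $\int_0^t|\tbA h(X_s)|ds+\int_l^r \frac{L^x_t}{2}|n(dx)|$, obtained via the occupation times formula and a localisation to compacts; (3) requires the It\^{o}--Tanaka formula and integration by parts to show that $h(X)M$ is a nonnegative local martingale, plus Theorem 5.5.15 and Proposition 5.5.22 of \cite{KS} to produce the unique and recurrent weak solution of (\ref{e:sderectr}); (4) rests on the boundary behaviour of $h$ forced by $-s_h(l+)=s_h(r-)=\infty$; and (5) --- the identity on which your entire strategy is built --- is derived in the paper from the theory of supermartingale multiplicative functionals (Theorem 62.19 of \cite{GTMP}) and is not available for free. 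Your use of (3), (4) and (5) is not circular, since the paper likewise proves (6) from the earlier items; but as it stands your text is a proof of the theorem's last assertion only, not of the theorem.
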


\begin{example} \label{ex:Bes}Suppose $\delta >2$ and consider a $\delta$-dimensional Bessel process on $(0,\infty)$, i.e. a one-dimensional diffusion with the dynamics 
\[
dX_t= 2\sqrt{X_t}dB_t + \delta dt.
\]
The scale function is given by $s(x)=1-x^{\frac{2-\delta}{2}}$. Thus, $X$ is transient and  approaches to $\infty$ as $t \rar \infty$, while $0$ is an inaccessible boundary.

Let  $h(x):=x^{\frac{2-\delta}{4}}$ and define 
\[
M_t:=\exp\left(\frac{(\delta-2)^2}{8}\int_0^t \frac{1}{X_s}ds\right), \qquad t \geq 0.
\]
Then, it follows from Theorem \ref{t:rectr} that $M$ is of finite variation. Moreover,
\[
s_h(x)=\frac{\delta -2}{2}\int_1^x\frac{1}{u} du=\log x, \qquad x >0.
\]
Thus, $-s_h(0)=s_h(\infty)=\infty$, and we conclude that $(h,M)$ is a recurrent transform by invoking Theorem \ref{t:rectr} again. The transformation yields the following SDE for the resulting process
\[
dX_t=   2\sqrt{X_t}dB_t + 2 dt,
\]
which is the SDE for a $2$-dimensional squared Bessel process. Recall (or see p.442 of \cite{RY}) that $0$ is polar for a $2$-dimensional squared Bessel process.
\end{example}
The following proposition gives an important example of a recurrent transformation for transient diffusions, which will  be useful in the sequel. 
\begin{proposition} \label{p:step1} Suppose $X$ is a regular transient diffusion satisfying Assumption \ref{a:reg}.  Let $y\in (l,r)$ be fixed and consider the pair $(h,M)$ defined by
\[
h(x):=u(x,y), \; x \in (l,r),\mbox{ and } M_t=\exp\left(\frac{s'(y)L^y_t}{2u(y,y)}\right).
\]
Then, the following hold:
\begin{enumerate}
\item $(h,M)$ is a recurrent transform for $X$.
\item There exists a unique weak solution to 
\be \label{e:step1}
X_t=x+\int_0^t\sigma(X_s)dB_s + \int_0^t \left\{b(X_s)+\sigma^2(X_s)\frac{u_x(X_s, y)}{u(X_s, y)}\right\} ds, \qquad t \geq 0,
\ee
for any $x \in (l,r)$, where  $u_x$ denotes the first partial left derivative of $u(x,y)$ with respect to $x$. 
\item Moreover, if $R^{h,x}$ denotes the law of the solution and $T$ is a stopping time such that $R^{h,x}(T<\infty)=1$, then for any $ F \in \cF_T$ the following identity holds:
\be \label{e:survival}
P^x(\zeta>T, F)= u(x,y)E^{h,x}\left[\chf_F \frac{1}{u(X_T,y)}\exp\left(-\frac{s'(y)}{2u(y,y)}L^y_T\right)\right],
\ee
where $E^{h,x}$ is the expectation operator with respect to the probability measure $R^{h,x}$.
\end{enumerate}
\end{proposition}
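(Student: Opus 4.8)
The plan is to realise $(h,M)$ as a special instance of the general construction in Theorem \ref{t:rectr}, applied to $h(x)=u(x,y)$. The work then reduces to checking the three hypotheses of that theorem for this particular $h$ --- that $h$ is absolutely continuous with left derivative of finite variation, that $s_h$ from (\ref{e:recscale}) is finite on $(l,r)$, and that $-s_h(l+)=s_h(r-)=\infty$ --- and afterwards to identifying the functional $M$ produced by the theorem with the one stated in the proposition. Once the hypotheses are verified, parts (1) and (2) follow at once from Theorem \ref{t:rectr}(3): weak existence and uniqueness are built into the notion of a recurrent transform, and the drift of (\ref{e:step1}) is exactly $\sigma^2 h'/h$ with $h'=u_x(\cdot,y)$, the isolated kink at $y$ being irrelevant for the $ds$-integral. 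I would also note that $h$ extends to $I$ vanishing at the finite-scale accessible boundaries, which is precisely the boundary condition flagged after (\ref{e:recscale}).

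First I would record the structural properties of $h=u(\cdot,y)$. By (\ref{e:psiff})--(\ref{e:psiif}), $u(\cdot,y)$ is, up to positive constants, proportional to the increasing scale solution on $(l,y)$ and to the decreasing one on $(y,r)$; in particular it is strictly positive on $(l,r)$, continuous, and $\bbA$-harmonic away from $y$. Since $s'(x)=\exp(-2\int_c^x b/\sigma^2)$ is absolutely continuous under (\ref{e:ESR}), $h'$ is a constant multiple of $s'$ on each side of $y$, hence of finite variation, with a single jump at $y$; computing the one-sided derivatives in each of the three boundary regimes shows this jump equals $-s'(y)$, so the singular part of $dh'$ is the point mass $n(dx)=-s'(y)\,\delta_y(dx)$. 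Harmonicity off $y$ gives $\tbA h=\half\sigma^2 h''+b h'=0$ Lebesgue-a.e.; since the occupation measure of $X$ is absolutely continuous with respect to Lebesgue measure, the corresponding time integral vanishes.

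The main obstacle is the boundary behaviour $-s_h(l+)=s_h(r-)=\infty$, which I would establish by a short case analysis over the three transient regimes. Near a finite-scale boundary, say $s(r)=1$, one has $h(x)\asymp 1-s(x)$ there, so $s_h(x)=\int^x s'(z)h^{-2}(z)\,dz\asymp\int^x s'(z)(1-s(z))^{-2}\,dz=(1-s(x))^{-1}\to\infty$; near an infinite-scale boundary, say $s(r)=\infty$, $h$ is eventually constant and $s_h(x)\asymp\int^x s'(z)\,dz=s(x)\to\infty$. The analogous computation at $l$, using $s(l)\in\{0,-\infty\}$, yields $s_h(l+)=-\infty$, while finiteness of $s_h$ at interior points is clear because $h$ is bounded away from $0$ on compacts.

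With the hypotheses in hand, Theorem \ref{t:rectr}(3) produces the recurrent transform with functional
\[
M_t=\exp\left(-\int_0^t\frac{\tbA h(X_s)}{h(X_s)}\,ds-\int_0^t\frac{1}{h(X_s)}\,d\Lambda_s(h)\right),\qquad \Lambda_t(h)=\int_{(l,r)}\frac{L^x_t}{2}\,n(dx).
\]
The first integral vanishes by the harmonicity observation, while $\Lambda_t(h)=-\tfrac{s'(y)}{2}L^y_t$ and $L^y$ increases only on $\{X_s=y\}$, where $h(X_s)=u(y,y)$; hence $M_t=\exp\big(\tfrac{s'(y)}{2u(y,y)}L^y_t\big)$, which is exactly the $M$ of the proposition and confirms part (1). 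Finally, part (3) follows by substituting this $h$ and $M$ into the survival identity (\ref{e:survivalg}) of Theorem \ref{t:rectr}(6): with $h(x)=u(x,y)$, $h(X_T)=u(X_T,y)$ and $M_T^{-1}=\exp(-\tfrac{s'(y)}{2u(y,y)}L^y_T)$, equation (\ref{e:survivalg}) becomes (\ref{e:survival}) verbatim.
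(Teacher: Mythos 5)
Your proposal is correct and follows essentially the same route as the paper: the paper proves Proposition \ref{p:step1} as a one-line corollary of Theorem \ref{t:rectr}, citing exactly the two facts you verify, namely that the singular part of $dh'$ is $n(dx)=-s'(y)\eps_y(dx)$ and that $u(\cdot,y)$ is $\bbA$-harmonic off $y$. Your additional case analysis of $-s_h(l+)=s_h(r-)=\infty$ via (\ref{e:psiff})--(\ref{e:psiif}) and the identification of $M$ are details the paper leaves implicit here (the boundary computation appears essentially verbatim in its proof of the more general Theorem \ref{t:rtrpot}), so you have simply made the same argument explicit.
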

The above is a direct corollary of Theorem \ref{t:rectr} since $n(dx)=-s'(y)\eps_{y}(dx)$ in the Lebesgue decomposition of $du_x(x,t)$ as in Part (1) of Theorem \ref{t:rectr} and $u(\cdot,y)$ is twice differentiable with  $\half\sigma^2(x)u_{xx}(x,y)+b(x)u_x(x,y)=0$ for all $x\neq  y$. 

Proposition \ref{p:step1} is in fact a special case of a more general result that will allow us to construct a large family of recurrent transformations. In order to motivate this more general result note that $u(\cdot,y)$ is the {\em potential\footnote{If $\mu$ is a measure on $(l,r)$, the potential of $\mu$ is the function $x \mapsto \int_{(l,r)} u(x,y)\mu(dy)$ and is denoted by $U\mu$. See Section VI.2 of \cite{BG} for details.}} of the Dirac measure at point $y$. Moreover, it is uniformly integrable being bounded.  Conversely, since $X$ in Assumption \ref{a:reg} is a symmetric diffusion, it is well-known (see, e.g., Theorem VI.2.11 in  \cite{BG}) any uniformly integrable potential $h$ is the potential of some measure $\mu$ on $(l,r)$, i.e. $h(x)=\int_{(l,r)} u(x,y)\mu(dy)$. Also note that if $h\not\equiv 0$ is a uniformly integrable potential, e.g. $h=u(\cdot,y)$, then $h(X)$ is a supermartingale, which is not a martingale. As a matter of fact, in view of the Riesz representation of excessive functions (see Theorem VI.2.11 in conjunction  with Proposition IV.5.4 in \cite{BG}) the greatest uniformly integrable harmonic function dominated by $h$ is $0$. The next result, whose proof is in the Appendix, shows that the potential of a probability measure on $(l,r)$ gives rise to a recurrent transform under an integrability condition. 
\begin{theorem} \label{t:rtrpot} Let $\mu$ be a Borel probability measure on $(l,r)$  such that $\int_{(l,r)}|s(y)|\mu(dy)<\infty$. Suppose $X$ is a regular  transient diffusion satisfying Assumption \ref{a:reg} and define 
	\[
	h(x):=\int_{(l,r)} u(x,y)\mu(dy).
	\]
\begin{enumerate}
	\item The left derivative $h'$ of $h$ exists and $(h,M)$ is a recurrent transform of $X$, where
	\[
	M_t :=\exp\left(\int_0^t\frac{1}{h(X_s)}dA_s\right) \mbox{ and } A_t:= \int_{(l,r)} \frac{s'(x)L^x_t}{2}\mu(dx).
	\]
	\item If $R^{h,x}$ denotes the law of the solution of (\ref{e:sderectr}) and $T$ is a stopping time such that $R^{h,x}(T<\infty)=1$, then for any $ F \in \cF_T$ the following identity holds:
	\[
	P^x(\zeta>T, F)= h(x)E^{h,x}\left[\chf_F \frac{1}{h(X_T)}\exp\left(-\int_0^t\frac{1}{h(X_s)}dA_s\right)\right],
	\]
	where $E^{h,x}$ is the expectation operator with respect to the probability measure $R^{h,x}$.
\end{enumerate}
\end{theorem}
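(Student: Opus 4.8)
The plan is to derive Theorem \ref{t:rtrpot} as an application of the general machinery in Theorem \ref{t:rectr}, exactly as Proposition \ref{p:step1} was obtained. The starting observation is that $h(x)=\int_{(l,r)}u(x,y)\mu(dy)$ is a potential of the probability measure $\mu$, so I first need to understand the smoothness of $h$ and compute the Lebesgue decomposition of its left-derivative $h'$. For each fixed $y$, the function $x\mapsto u(x,y)$ is harmonic (i.e. $\bbA u(\cdot,y)=0$) away from $y$ and has a kink at $y$ whose size is governed by the scale function; concretely, from the explicit forms \eqref{e:psiff}--\eqref{e:psiif} one reads off that the left-derivative jump of $u(\cdot,y)$ at $y$ contributes an atom $-s'(y)\eps_y(dx)$ to $du_x(\cdot,y)$, which is precisely the singular part used in Proposition \ref{p:step1}.

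\emph{First}, I would verify that differentiation passes through the integral against $\mu$, so that $h'(x)=\int_{(l,r)}u_x(x,y)\mu(dy)$ exists as a left-derivative of finite variation, and that the singular part of $dh'$ is $n(dx)=-s'(x)\mu(dx)$. This identifies the candidate additive functional $\Lambda_t(h)=\int_{(l,r)}\frac{L^x_t}{2}n(dx)=-\int_{(l,r)}\frac{s'(x)L^x_t}{2}\mu(dx)=-A_t$, which matches the claimed formula for $M_t=\exp(\int_0^t h(X_s)^{-1}dA_s)$ once one notes that $\tbA h\equiv 0$ off the support of $\mu$, so the drift contribution in the general $M$ of Theorem \ref{t:rectr} vanishes and only the local-time term survives. \emph{Second}, I would check the hypotheses of Theorem \ref{t:rectr}: that $h\geq 0$ is absolutely continuous with $h'$ of finite variation (inherited from the $u(\cdot,y)$ building blocks via the integrability of $s$ against $\mu$), that $s_h$ from \eqref{e:recscale} is finite on $(l,r)$ with $-s_h(l+)=s_h(r-)=\infty$, and that the integrability condition \eqref{e:integrability} holds.

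The boundary behaviour of $s_h$ is where the moment assumption $\int_{(l,r)}|s(y)|\mu(dy)<\infty$ enters. Using the explicit expression $u(x,y)=s(x\wedge y)$-type formulas of \eqref{e:psiff}--\eqref{e:psiif}, one sees that as $x\to r$ the potential $h(x)$ behaves like a constant (controlled by the finite $\mu$-mass of $s$), so that $s'(x)/h^2(x)$ is integrable near $r$ in the sense forcing $s_h(r-)=\infty$ exactly when $s(r)=1$, and the mirror statement holds near $l$; the integrability of $s$ against $\mu$ is what guarantees $h$ does not degenerate to $0$ prematurely nor blow up. I would make this quantitative by bounding $h$ from above and below near each boundary using the monotonicity of $u(\cdot,y)$ in the appropriate direction. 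Once $s_h$ has the required explosion, Theorem \ref{t:rectr} delivers that $(h,M)$ is a recurrent transform and yields parts (3)--(6) of that theorem verbatim; in particular \eqref{e:survivalg} specialises to the stated identity in part (2) after substituting $\Lambda_t(h)=-A_t$ into $\frac{1}{h(X_T)M_T}=\frac{1}{h(X_T)}\exp(-\int_0^T h(X_s)^{-1}dA_s)$.

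The main obstacle I anticipate is the \emph{second} step rather than the \emph{first}: establishing the precise boundary asymptotics of the potential $h$ and hence the explosion $-s_h(l+)=s_h(r-)=\infty$ under only the first-moment hypothesis on $s$. The differentiation-under-the-integral and the identification of the singular measure $n(dx)=-s'(x)\mu(dx)$ are essentially a Fubini argument combined with the known kink of $u(\cdot,y)$, and are routine once the dominated-convergence bounds are in place. By contrast, controlling $h$ two-sidedly near the boundaries — especially ruling out that $h(x)\to 0$ too fast at an accessible boundary in a way that would break finiteness of $s_h$ on the interior, versus ensuring $h$ is bounded enough that $s'/h^2$ integrates to $\infty$ at each end — requires carefully splitting $\mu$ according to whether mass lies above or below the current point $x$ and invoking the convention $s(l)=0$, $s(r)=1$ together with $\int|s|\,d\mu<\infty$. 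I would handle this by writing $h(x)=\int_{(l,x]}u(x,y)\mu(dy)+\int_{(x,r)}u(x,y)\mu(dy)$ and analysing each piece using \eqref{e:psiff}--\eqref{e:psiif}, then passing to the limit.
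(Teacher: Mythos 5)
Your overall architecture coincides with the paper's: Theorem \ref{t:rtrpot} is deduced from Theorem \ref{t:rectr} exactly as Proposition \ref{p:step1} was, and the two substantive tasks are (i) identifying the measure that drives $M$, and (ii) proving $-s_h(l+)=s_h(r-)=\infty$. However, your step (i) contains a genuine error. You claim that the singular part of $dh'$ is $n(dx)=-s'(x)\mu(dx)$ and that $\tbA h\equiv 0$ off the support of $\mu$, so that the drift contribution to $M$ in Theorem \ref{t:rectr} vanishes and only the local-time term survives. This fails whenever $\mu$ has an absolutely continuous component. Take $\mu(dy)=f(y)m(dy)$ with $f\geq 0$ continuous: then $h=Uf$, the measure $dh'$ has no singular part at all ($n\equiv 0$, because the atoms $-s'(y)\eps_y(dx)$ of $du_x(\cdot,y)$ get smeared by $m(dy)$ into an absolutely continuous measure), the support of $\mu$ may be all of $(l,r)$, and $\tbA h=-f\not\equiv 0$; the functional $A_t=\int_0^t f(X_s)ds$ is produced entirely by the drift term via the occupation times formula. (This is precisely the situation described in the remark following Theorem \ref{t:prctr}.) What is actually true, and what the paper proves, is the combined identity of measures
\[
\half\, dh'(x)+\frac{b(x)h'(x)}{\sigma^2(x)}dx=-\half s'(x)\mu(dx) \qquad \mbox{on } (l,r),
\]
established weakly by testing against compactly supported continuous functions, using Fubini (justified by the uniform bound $|u_x(x,y)|\leq s'(x)$), the harmonicity $\half\sigma^2 u_{xx}(\cdot,y)+bu_x(\cdot,y)=0$ off $y$, the kink $u_x(y+,y)-u_x(y,y)=-s'(y)$, and integration by parts. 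Your Fubini-plus-kink computation is the right tool but is aimed at the wrong target (a pointwise Lebesgue decomposition of $dh'$ rather than this combined identity); as written, your argument only covers measures $\mu$ that are singular with respect to Lebesgue measure.

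On step (ii), which you flag as the main obstacle, your proposal is correct but takes a different route from the paper. You split $h(x)=\int_{(l,x]}u(x,y)\mu(dy)+\int_{(x,r)}u(x,y)\mu(dy)$ and estimate each piece via (\ref{e:psiff})--(\ref{e:psiif}); the moment hypothesis then controls $h$ near an inaccessible boundary (for instance, when $s(l)=-\infty$ it forces $(1-s(x))\mu((l,x])\to 0$ as $x\to l$, so $h$ stays bounded there), and near an accessible boundary it gives $h(x)\leq C(1-s(x))$ or $h(x)\leq Cs(x)$, which yields the required explosion of $s_h$. The paper instead uses a concavity/Jensen argument: since $u(x,\cdot)$ is concave as a function of the scale of its second argument, $h(x)\leq u(x,y^*)$ where $y^*\in(l,r)$ is chosen so that $s(y^*)=\int_{(l,r)}s\,d\mu$ (the moment hypothesis is exactly what places $\int s\,d\mu$ strictly between $s(l)$ and $s(r)$), and the explosion of $s_h$ follows by comparison with the single-point potential, computed explicitly. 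The Jensen route is shorter and treats all boundary configurations at once, but your case analysis delivers the same conclusion. Two smaller remarks: your difficulty assessment is inverted relative to the paper — with the Jensen trick, (ii) is the short step and (i) is where the careful analysis lives — and condition (\ref{e:integrability}) is a conclusion of Theorem \ref{t:rectr} under its hypotheses, not something you need to verify separately.
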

\begin{remark}
	Note that $u(\cdot,y)$ satisfies the assumptions of the above theorem since $\mu=\eps_y$ and $\int_{(l,r)} u(x,z)\mu(dz)=u(x,y)<\infty$ for all $x\in (l,r)$. Thus, Proposition \ref{p:step1} is a direct consequence of Theorem \ref{t:rtrpot} as well.
\end{remark}
The next example of a recurrent transform that we shall consider in this paper is obtained via  the $\alpha$-potential density, $u^{\alpha}$ of $X$. In contrast with the previous transform, which only exists for transient diffusions, the next transform can be applied to all regular diffusions. Moreover, the resulting diffusion  will be {\em positive recurrent}. 
\begin{proposition} \label{p:AOtr} Suppose $X$ is a regular  diffusion satisfying Assumption \ref{a:reg}.  Let $y\in (l,r)$ and $\alpha >0$ be fixed and consider the pair $(h,M)$ defined by
\[
h(x):=u^{\alpha}(x,y), \; x \in (l,r),\mbox{ and } M_t=\exp\left(-\alpha t+ \frac{s'(y)L^y_t}{2u^{\alpha}(y,y)}\right).
\]
Then, the following hold:
\begin{enumerate}
\item $(h,M)$ is a recurrent transform for $X$.
\item There exists a unique weak solution to 
\be \label{e:AOtr}
X_t=x+\int_0^t\sigma(X_s)dB_s + \int_0^t \left\{b(X_s)+\sigma^2(X_s)\frac{u^{\alpha}_x(X_s, y)}{u^{\alpha}(X_s, y)}\right\} ds, \qquad t \geq 0,
\ee
for any $x \in (l,r)$, where  $u^{\alpha}_x$ denotes the first partial left derivative of $u^{\alpha}(x,y)$ with respect to $x$. 
\item Moreover, the diffusion defined by the solutions of (\ref{e:AOtr}) is positive recurrent and its stationary distribution on $(l,r)$ is given by
\be \label{e:invdist}
\pi(dx)=\frac{(u^{\alpha}(x,y))^2}{\int_0^{\infty}se^{-\alpha s}p(s,y,y)ds}m(dx),
\ee
where $(p(t,\cdot,\cdot))_{t >0}$ is the transition density of the original diffusion with respect to its speed measure $m$. 
\end{enumerate}
\end{proposition}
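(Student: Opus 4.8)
The plan is to read off parts (1) and (2) from Theorem \ref{t:rectr} applied to $h=u^\alpha(\cdot,y)$, and to prove part (3) by identifying the speed measure of the transformed diffusion and computing its total mass.

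First I would record the structure of $h$. By (\ref{e:ualpha}), $h(x)=\frac{\phi_\alpha(y)}{w_\alpha}\psi_\alpha(x)$ for $x\le y$ and $h(x)=\frac{\psi_\alpha(y)}{w_\alpha}\phi_\alpha(x)$ for $x\ge y$, with $\psi_\alpha,\phi_\alpha$ solving $\bbA f=\alpha f$. Hence $h\in C^2$ off $y$ with $\tbA h(x)=\alpha h(x)$ there, while at $y$ the left derivative jumps by $\frac{\psi_\alpha(y)\phi'_\alpha(y)-\psi'_\alpha(y)\phi_\alpha(y)}{w_\alpha}=-s'(y)$ by the definition of $w_\alpha$. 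Thus the singular part in the Lebesgue decomposition of Theorem \ref{t:rectr}(1) is $n(dx)=-s'(y)\eps_y(dx)$, exactly as in Proposition \ref{p:step1}. Substituting $\tbA h=\alpha h$ (off $y$) and this $n$ into the expression for $M$ in Theorem \ref{t:rectr}(3) collapses the first integral to $-\alpha t$ (the diffusion spends zero Lebesgue time at $y$) and the second to $\frac{s'(y)}{2u^\alpha(y,y)}L^y_t$ (as $L^y$ grows only on $\{X=y\}$, where $h=u^\alpha(y,y)$), reproducing the stated $M$. The integrability condition (\ref{e:integrability}) is immediate: $u^\alpha(\cdot,y)\le u^\alpha(y,y)$ gives $\int_0^t|\tbA h(X_s)|\,ds=\alpha\int_0^t u^\alpha(X_s,y)\,ds\le \alpha u^\alpha(y,y)t$, and the singular term is $\frac{s'(y)}{2}L^y_t<\infty$.

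The one nontrivial hypothesis is the explosive scale condition $-s_h(l+)=s_h(r-)=\infty$. Here I would use $\frac{d}{dz}\big(\phi_\alpha/\psi_\alpha\big)=-w_\alpha s'/\psi_\alpha^2$ and $\frac{d}{dz}\big(\psi_\alpha/\phi_\alpha\big)=w_\alpha s'/\phi_\alpha^2$. Since $h^2$ is proportional to $\psi_\alpha^2$ near $l$ and to $\phi_\alpha^2$ near $r$, integrating $s_h'=s'/h^2$ shows that $s_h(l+)$ and $s_h(r-)$ are governed by the boundary limits of $\phi_\alpha/\psi_\alpha$ and $\psi_\alpha/\phi_\alpha$ respectively; both limits are $+\infty$ by the standard boundary behaviour of the fundamental solutions (pp.~18--19 of \cite{BorSal}), giving $s_h(l+)=-\infty$ and $s_h(r-)=+\infty$. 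This completes (1) and (2).

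For (3), the transformed SDE (\ref{e:AOtr}) has scale derivative $s_h'(x)=s'(x)/h^2(x)$, so its speed measure is $m_h(dx)=\frac{2}{s_h'(x)\sigma^2(x)}\,dx=h^2(x)m(dx)=(u^\alpha(x,y))^2 m(dx)$. Because the speed measure is the invariant measure of a one-dimensional diffusion, and a recurrent diffusion is positive recurrent exactly when this measure is finite---its normalisation then being the stationary law---it remains to compute $m_h((l,r))$. Writing each factor as $u^\alpha(x,y)=\int_0^\infty e^{-\alpha t}p(t,x,y)\,dt$, using $p(t,x,y)=p(t,y,x)$, Tonelli, and Chapman--Kolmogorov $\int_{(l,r)}p(s,y,x)p(t,x,y)m(dx)=p(s+t,y,y)$, I obtain
\[
\int_{(l,r)}(u^\alpha(x,y))^2 m(dx)=\int_0^\infty\int_0^\infty e^{-\alpha(s+t)}p(s+t,y,y)\,ds\,dt=\int_0^\infty v\,e^{-\alpha v}p(v,y,y)\,dv,
\]
the last equality being $\int_0^\infty\int_0^\infty f(s+t)\,ds\,dt=\int_0^\infty v f(v)\,dv$. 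This is finite since $v e^{-\alpha v}\le C_\beta e^{-\beta v}$ for any $\beta\in(0,\alpha)$, so the integral is at most $C_\beta u^\beta(y,y)<\infty$. Normalising $m_h$ by this mass yields precisely (\ref{e:invdist}).

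The main obstacle is not any single computation but the clean assembly of classical inputs: quoting correctly the endpoint limits of $\phi_\alpha/\psi_\alpha$ and $\psi_\alpha/\phi_\alpha$ from Sturm--Liouville theory for the scale condition, and confirming that the criterion ``finite speed measure $\Rightarrow$ positive recurrence with stationary law the normalised speed measure'' applies verbatim to the diffusion produced by Theorem \ref{t:rectr}. By contrast, the computational heart---the Fubini/Chapman--Kolmogorov identity above---is short once set up.
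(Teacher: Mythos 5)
Your proposal is correct and follows essentially the same route as the paper: the paper proves parts (1) and (2) as a direct corollary of Theorem \ref{t:rectr} (formally, as the special case $\mu=\eps_y$ of Theorem \ref{t:prctr}), using exactly the structural facts you record --- $\tbA h=\alpha h$ off $y$, the jump $-s'(y)$ in the left derivative at $y$ computed from the Wronskian, hence $n(dx)=-s'(y)\eps_y(dx)$ --- and part (3) by identifying the speed measure of the transform as $h^2\,dm$ and computing its total mass via symmetry of $p$, Tonelli and Chapman--Kolmogorov, with the same bound $ve^{-\alpha v}\le C_\beta e^{-\beta v}$ giving finiteness. The only point where you diverge is the verification of $-s_h(l+)=s_h(r-)=\infty$: the paper (in the proof of Theorem \ref{t:prctr}) splits into the transient case, where $u^\alpha\le u$ reduces the claim to Theorem \ref{t:rtrpot}, and the recurrent case, where boundedness of $h$ together with $-s(l)=s(r)=\infty$ suffices; you instead give a unified argument integrating $s'/\psi_\alpha^2$ and $s'/\phi_\alpha^2$ exactly via the Wronskian identities and quoting that $\phi_\alpha/\psi_\alpha\to\infty$ at $l$ and $\psi_\alpha/\phi_\alpha\to\infty$ at $r$. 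Both are valid; yours is cleaner for the Dirac-measure case at hand (and yields an explicit formula for $s_h$ near the boundaries), while the paper's case split is what generalises to potentials of arbitrary measures $\mu$.
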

As in the case of Proposition \ref{p:step1}, parts (1) and (2) of the above result is a direct corollary of Theorem \ref{t:rectr} but will also be a special case of a more general theorem in terms of $\alpha$-potentials. Analogously,  $u^{\alpha}(\cdot,y) $ of $X$ is the $\alpha$-potential of the Dirac measure at $y$ and $(e^{-\alpha t}u^{\alpha}(X_t,y))$ is a uniformly integrable supermartingale converging a.s. to $0$ as $t \rar \zeta$.  Moreover,  any uniformly integrable $\alpha$-potential is of the form $\int_l^r u^{\alpha}(x,y)\mu(dy)$ for some measure on $(l,r)$.
\begin{theorem} \label{t:prctr}
	 Suppose $X$ is a regular  diffusion satisfying Assumption \ref{a:reg} and $\alpha>0$. Let $\mu$ be a Borel probability measure on $(l,r)$ such that $\int_{(l,r)} u^{\alpha}(y,y)d\mu(y)<\infty$. Define 
	 \[
	 h(x):=\int_{(l,r)}u^{\alpha}(x,y)\mu(dy).
	 \]
	 \begin{enumerate}
	 	\item The left derivative $h'$ of $h$ exists and $(h,M)$ is a recurrent transform of $X$, where
	 	\[
	 	M_t :=\exp\left(-\alpha t+\int_0^t\frac{1}{h(X_s)}dA_s\right) \mbox{ and } A_t:= \int_{(l,r)} \frac{s'(x)L^x_t}{2}\mu(dx).
	 	\]
	 	\item Moreover, if there exists $\eps >0$ such that $\int_{(l,r)} u^{\alpha-\eps}(y,y)\mu(dy)<\infty$, then the diffusion defined by the  solutions of (\ref{e:sderectr}) is positive recurrent and its stationary distribution on $(l,r)$ is given by 
	 	\[
	 	\pi(dx)=\frac{h^2(x)}{\int_l^r h^2(y)m(dy)}m(dx).
	 	\]
	 \end{enumerate} 
\end{theorem}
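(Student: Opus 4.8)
The plan is to read part (1) off Theorem \ref{t:rectr} once its hypotheses are verified for $h(x)=\int_{(l,r)}u^{\alpha}(x,y)\mu(dy)$, and to obtain part (2) by identifying the speed measure of the transformed diffusion and checking it is finite. I would first record, from (\ref{e:ualpha}), the representation $h=w_{\alpha}^{-1}\left(\phi_{\alpha}\Psi+\psi_{\alpha}\Phi\right)$ with $\Psi(x):=\int_{(l,x)}\psi_{\alpha}\,d\mu$ and $\Phi(x):=\int_{[x,r)}\phi_{\alpha}\,d\mu$. The kernel bound $u^{\alpha}(x,y)\le\sqrt{u^{\alpha}(x,x)u^{\alpha}(y,y)}$ (Chapman--Kolmogorov plus Cauchy--Schwarz on $p$) together with $C_0:=\int u^{\alpha}(y,y)\mu(dy)<\infty$ makes $h$ finite, positive and bounded. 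Differentiating, the atoms of $d\Psi$ and $d\Phi$ cancel, so $h'=w_{\alpha}^{-1}(\phi'_{\alpha}\Psi+\psi'_{\alpha}\Phi)$ is the left derivative; differentiating once more and using the Wronskian identity $\psi'_{\alpha}\phi_{\alpha}-\psi_{\alpha}\phi'_{\alpha}=w_{\alpha}s'$ produces the decomposition of Theorem \ref{t:rectr}(1) with singular part $n(dx)=-s'(x)\mu(dx)$ and $\tbA h=\alpha h$ on the absolutely continuous part. Consequently $\Lambda_t(h)=-A_t$, the integrability requirement (\ref{e:integrability}) holds because $h$ is bounded and $A_t<\infty$ on $[t<\zeta]$, and the functional $M$ of Theorem \ref{t:rectr}(3) collapses to the stated $\exp(-\alpha t+\int_0^t h(X_s)^{-1}dA_s)$.

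The one genuinely delicate hypothesis, and the step I expect to be the main obstacle, is the third defining property of a recurrent transform, namely $-s_h(l+)=s_h(r-)=\infty$ for $s_h$ in (\ref{e:recscale}). I would argue it at $r$ (the endpoint $l$ being symmetric) through $G:=\psi_{\alpha}/\phi_{\alpha}$, which is increasing with $G'=w_{\alpha}s'/\phi_{\alpha}^2$ and satisfies $G(r-)=\infty$ by the boundary behaviour of the fundamental solutions (valid at accessible and inaccessible boundaries alike under the paper's convention of killing at regular boundaries). Writing $h=w_{\alpha}^{-1}\phi_{\alpha}(\Psi+G\Phi)$ yields the exact identities
\be
\frac{s'(x)}{h^2(x)}=\frac{w_{\alpha}\,G'(x)}{(\Psi(x)+G(x)\Phi(x))^2},\qquad \frac{(\Psi+G\Phi)^2}{G}=\frac{w_{\alpha}\,h^2}{u^{\alpha}(x,x)}.
\ee
The same Cauchy--Schwarz bound gives $h^2\le C_0\,u^{\alpha}(x,x)$, whence $(\Psi+G\Phi)^2\le w_{\alpha}C_0\,G$ and therefore
\be
s_h(r-)=w_{\alpha}\int^{r}\frac{G'(x)}{(\Psi+G\Phi)^2}\,dx\ge \frac{1}{C_0}\int^{r}\frac{G'(x)}{G(x)}\,dx=\infty.
\ee
Pinning down the cancellation behind $n(dx)=-s'(x)\mu(dx)$ and the claim $G(r-)=\infty$ are the points that need the most care.

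For part (2) I would use that the diffusion solving (\ref{e:sderectr}) has scale $s_h$ and speed measure $m_h(dx)=h^2(x)m(dx)$, that both of its boundaries are inaccessible by part (1), and the classical fact that a recurrent one-dimensional diffusion is positive recurrent with stationary law the normalised speed measure precisely when $m_h(l,r)<\infty$; so it suffices to prove $\int_l^r h^2\,dm<\infty$. Expanding the square and using Chapman--Kolmogorov and the symmetry of $p$,
\be
\int_l^r u^{\alpha}(x,y)u^{\alpha}(x,y')\,m(dx)=\int_0^{\infty}\tau e^{-\alpha\tau}p(\tau,y,y')\,d\tau,
\ee
so $\int_l^r h^2\,dm$ equals the double $\mu$-integral of the right-hand side, which for $\mu=\eps_y$ recovers the denominator of (\ref{e:invdist}). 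A Cauchy--Schwarz in $\tau$ and in $(y,y')$ reduces finiteness to $\int(\int_0^{\infty}\tau e^{-\alpha\tau}p(\tau,y,y)\,d\tau)^{1/2}\mu(dy)<\infty$, and the elementary estimate $\tau e^{-\alpha\tau}\le(e\eps)^{-1}e^{-(\alpha-\eps)\tau}$ gives $\int_0^{\infty}\tau e^{-\alpha\tau}p(\tau,y,y)\,d\tau\le(e\eps)^{-1}u^{\alpha-\eps}(y,y)$; Jensen against the probability measure $\mu$ then closes the bound using the stronger hypothesis $\int u^{\alpha-\eps}(y,y)\mu(dy)<\infty$. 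Normalising $m_h$ finally gives $\pi(dx)=h^2(x)m(dx)/\int_l^r h^2\,dm$.
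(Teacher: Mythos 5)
Your proposal is correct, and at the level of architecture it matches the paper: both reduce part (1) to Theorem \ref{t:rectr} by differentiating under the integral, showing $h'$ is of finite variation, computing $\tbA h=\alpha h$ together with the Wronskian-driven singular contribution $-s'(x)\mu(dx)$ so that the functional of Theorem \ref{t:rectr} collapses to the stated $M$; and both prove part (2) by showing the speed measure $h^2m$ of (\ref{e:sderectr}) is finite via Chapman--Kolmogorov, symmetry of $p$, and the elementary bound $\tau e^{-\alpha\tau}\le(e\eps)^{-1}e^{-(\alpha-\eps)\tau}$ (the paper applies Jensen first and only needs the diagonal kernel identity, while you expand the square exactly and use Cauchy--Schwarz; this difference is immaterial). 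The genuine divergence is the key step $-s_h(l+)=s_h(r-)=\infty$. The paper argues by cases: for transient $X$ it bounds $h\le\int_{(l,r)}u(\cdot,y)\mu(dy)$ and quotes Theorem \ref{t:rtrpot}; for recurrent $X$ it passes to natural scale and combines $h\le\int_{(l,r)}u^{\alpha}(y,y)\mu(dy)<\infty$ with $-s(l)=s(r)=\infty$. Your single argument via $G=\psi_{\alpha}/\phi_{\alpha}$, the identity $s'/h^{2}=w_{\alpha}G'/(\Psi+G\Phi)^{2}$, the bound $h^{2}\le C_{0}u^{\alpha}(x,x)$, and $\int G'/G=\log G\rar\infty$ handles all boundary types at once, and it buys something concrete: the paper's transient branch implicitly requires the hypothesis of Theorem \ref{t:rtrpot}, namely $\int_{(l,r)}|s(y)|\mu(dy)<\infty$, which does \emph{not} follow from the stated assumption $\int_{(l,r)}u^{\alpha}(y,y)\mu(dy)<\infty$. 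For instance, for Brownian motion on $(0,\infty)$ absorbed at $0$ one has $u^{\alpha}(y,y)=\big(1-e^{-2y\sqrt{2\alpha}}\big)/(2\sqrt{2\alpha})$ bounded while $u(y,y)=s(y)=y$, so any probability measure with infinite mean satisfies the theorem's hypothesis but not that of Theorem \ref{t:rtrpot}; your route covers this case under exactly the stated hypothesis, so it is not merely different but more robust.

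Three details to nail down when writing this up. First, Cauchy--Schwarz gives $h(x)\le\sqrt{C_{0}u^{\alpha}(x,x)}$, which is finiteness rather than boundedness of $h$; boundedness (only local boundedness is actually needed) follows instead from the monotonicity bound $u^{\alpha}(x,y)\le u^{\alpha}(y,y)$. Second, the claim $G(r-)=\infty$ at every boundary type follows from the dichotomy: if $r$ is inaccessible then $T_y\rar\infty$ as $y\rar r$ forces $\psi_{\alpha}(r-)=\infty$ (with $\phi_{\alpha}(r-)<\infty$ by monotonicity), while if $r$ is accessible it is absorbing under the paper's convention, whence $\phi_{\alpha}(x)/\phi_{\alpha}(y)=E^{x}[e^{-\alpha T_y}]\le P^{x}(T_y<\zeta)\rar0$ as $x\rar r$, so $\phi_{\alpha}(r-)=0$. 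Third, when $\mu$ has an absolutely continuous component, the singular part of $dh'$ is $-s'\,d\mu_{\mathrm{sing}}$ (the rest folds into $h''$), but the combined identity $\frac{\sigma^{2}}{2}\,dh'+bh'\,dx=\alpha h\,dx-\frac{\sigma^{2}s'}{2}\,d\mu$, and hence the stated $M$, are unchanged.
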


\begin{remark}
	Note that $u^{\alpha}(\cdot,y)$ satisfies the assumptions of the above theorem since $\mu=\eps_y$ and $\int_{(l,r)} u^{\alpha-\eps}(x,z)\mu(dz)=u^{\alpha-\eps}(x,y)<\infty$ for all $\eps\in [0,\alpha)$. Thus Proposition \ref{p:AOtr} follows directly from Theorem \ref{t:prctr}.
	
	 Moreover, if $X$ is transient, the potential density $u$ exists and is finite.  In this case the condition $\int_{(l,r)} u(y,y)\mu(dy)<\infty$ is equivalent to $\int_{(l,r)} |s(y)|\mu(dy)<\infty$ under the assumption that $\mu$ is a probability measure. Thus,   the condition $\int_{(l,r)} u^{\alpha}(y,y)\mu(dy)<\infty$ in Theorem \ref{t:prctr} is the exact analogue of the condition $\int_{(l,r)} |s(y)|\mu(dy)<\infty$ of Theorem \ref{t:rtrpot}.
	 
	 If $f$ is nonnegative, $\int_l^r f(x)m(dx)=1$, and $\int_l^r f(x)u^{\alpha}(x,x)m(dx)<\infty$,  Theorems \ref{t:rtrpot} and \ref{t:prctr} show that $h(x):=U^{\alpha}f(x)$ will define a recurrent transform for $\alpha\geq 0$. In this case the finite variation process $A$ will be given by
	\[
	A_t=\int_0^t f(X_s)ds.
	\]	
	
	For instance, in  Example \ref{ex:Bes} it can be verified using the scale function and the speed measure of squared Bessel processes that $h(x)=\frac{(\delta-2)^2}{8}\int_0^{\infty}u(x,y)y^{-\frac{\delta +2}{4}}m(dy)$ leading to $dA_t=X_t^{-\frac{\delta +2}{4}}dt$ in the notation of Theorem \ref{t:rtrpot}.
\end{remark}
\begin{example} \label{ex:BMalt} Suppose $X$ is a standard Brownian motion. It is well-known that
\[
u^{\alpha}(x,y)=\frac{1}{\sqrt{2 \alpha}}\exp\left(-\sqrt{2 \alpha}|x-y|\right).
\]
Thus, if we use  the transform in Proposition \ref{p:AOtr} with $y=0$, the recurrent transform is the solution to the following SDE:
\[
dX_t= dB_t -\sqrt{2 \alpha}\sgn(X_t)dt,
\]
where $\sgn(x)=-\chf_{[x<0]}+\chf_{[x\geq 0]}$. This is a Brownian motion with alternating state-dependent drift, which plays a key role in the so-called {\em bang-bang} control problem (see Section 6.6.5 in \cite{KS} and the references therein).
\end{example}

We shall  consider in  subsequent sections the applications of the above recurrent transforms to optimal stopping as well as some pricing issues arising in Black-Scholes models when the stock  price follows a strict local martingale. However, one can find an immediate application of the recurrent transform to the computation of the distribution of the first exit time for a one-dimensional diffusion from an interval. Indeed,  such a first exit time can always be viewed as the life time of a transient diffusion by killing the original one as soon as it exits the given interval. Thus, the problem reduces to finding $P^x(\zeta>t)$ for all $t>0$, where $P^x$ is the law of the transient diffusion starting at $x$ and $\zeta$ is its lifetime, i.e. the first time it exits the given interval.  The following is a direct consequence of Proposition \ref{p:step1}.
\begin{corollary} \label{c:survfunction}  Let $X$ be a regular transient diffusion satisfying Assumption \ref{a:reg}.  Then
\[
P^x(\zeta>t)=  u(x,y)E^{h,x}\left[\frac{1}{u(X_t,y)}\exp\left(-\frac{s'(y)}{2u(y,y)}L^y_t\right)\right],
\]
where $E^{h,x}$ is the expectation operator with respect to the law of the recurrent transform given by (\ref{e:step1}). 
\end{corollary}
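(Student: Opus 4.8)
The plan is to read off the corollary as the specialisation of Proposition \ref{p:step1}(3) to a deterministic time and a trivial event. Recall that for the fixed $y \in (l,r)$ the pair $(h,M)$ with $h=u(\cdot,y)$ and $M_t=\exp\left(\frac{s'(y)L^y_t}{2u(y,y)}\right)$ is, by Proposition \ref{p:step1}(1)--(2), a recurrent transform of $X$, and that (\ref{e:step1}) admits a unique weak solution whose law I denote by $R^{h,x}$. The entire content of the corollary is therefore already packaged in the identity (\ref{e:survival}); what remains is to select the correct stopping time and event.

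First I would check that the hypothesis of Proposition \ref{p:step1}(3) is met by the constant time $T=t$. A constant is trivially an $(\cF_t)$-stopping time, and since $t<\infty$ we have $R^{h,x}(t<\infty)=1$ without any further argument. It is worth noting here that the transformed diffusion is recurrent on $(l,r)$: the defining property $-s_h(l+)=s_h(r-)=\infty$ renders both boundaries inaccessible, so under $R^{h,x}$ the process has infinite lifetime and $X_t\in(l,r)$, $R^{h,x}$-a.s. In particular $u(X_t,y)>0$, so the integrand $\frac{1}{u(X_t,y)}\exp\left(-\frac{s'(y)}{2u(y,y)}L^y_t\right)$ is well defined almost surely, and the expectation on the right-hand side is meaningful.

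I would then apply Proposition \ref{p:step1}(3) with $T=t$ and $F=\Om$, so that $\chf_F\equiv 1$ and the left-hand side of (\ref{e:survival}) collapses to $P^x(\zeta>t,\Om)=P^x(\zeta>t)$. Substituting these choices into (\ref{e:survival}) produces exactly the claimed identity. Since the argument is a direct substitution, there is no real obstacle; the only point deserving a line of justification is the well-definedness just discussed, and the finiteness of the right-hand side is automatic because it equals the probability $P^x(\zeta>t)\in[0,1]$. One might also remark that the formula holds for every admissible reference point $y\in(l,r)$, the left-hand side being independent of $y$, which furnishes a family of Monte Carlo estimators of the survival function indexed by $y$.
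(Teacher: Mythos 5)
Your proof is correct and matches the paper's own route: the paper derives Corollary \ref{c:survfunction} precisely as "a direct consequence of Proposition \ref{p:step1}," i.e.\ by applying the identity (\ref{e:survival}) with the deterministic stopping time $T=t$ and the trivial event $F=\Om$, exactly as you do. Your additional remarks on the well-definedness of the integrand under $R^{h,x}$ and on the $y$-indexed family of estimators are sound but not needed for the argument.
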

Although the above formula does not in general give $P^x(\zeta>t)$ in closed-form, it is nevertheless practical. Indeed, by running a Monte-Carlo simulation of the solution of (\ref{e:step1}), one can get a close estimate of
\[
E^{h,x}\left[\frac{1}{u(X_t,y)}\exp\left(-\frac{s'(y)}{2u(y,y)}L^y_t\right)\right]
\]
by approximating the local time using the occupation times formula. 

Karatzas and Ruf \cite{KR} have shown that the function $v(t,x):=P^x(\zeta>t)$ is the smallest nonnegative classical supersolution of 
\be \label{e:cauchy0}
v_t=Av, \qquad v(0,\cdot)=1
\ee
under the assumption that $\sigma$ and $b$ are locally uniformly H\"older continuous on $(l,r)$.  Thus,  combining their Proposition 5.4 and Corollary \ref{c:survfunction} we deduce the following.
\begin{corollary}  Let $X$ be a regular transient diffusion satisfying Assumption \ref{a:reg}. Assume further that $\sigma$ and $b$ that appears in (\ref{e:sdeX}) are locally uniformly H\"older continuous on $(l,r)$. Define
\[
v(t,x):= u(x,y)E^{h,x}\left[\frac{1}{u(X_t,y)}\exp\left(-\frac{s'(y)}{2u(y,y)}L^y_t\right)\right],
\]
where $E^{h,x}$ is the expectation operator with respect to the law of the recurrent transform given by (\ref{e:step1}).  Then, $v$ is the smallest nonnegative classical supersolution of (\ref{e:cauchy0}).
\end{corollary}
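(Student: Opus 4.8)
The plan is to obtain this corollary as an immediate synthesis of two ingredients that are, by this point, already in hand: the probabilistic identity furnished by Corollary~\ref{c:survfunction}, and the partial differential equation characterisation of the survival function due to Karatzas and Ruf.

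First I would appeal to Corollary~\ref{c:survfunction}, which asserts precisely that the function $v$ written in the statement is nothing but the survival function $t\mapsto P^x(\zeta>t)$. It is worth emphasising that this identity rests only on Assumption~\ref{a:reg} (through Proposition~\ref{p:step1} and the change-of-measure formula (\ref{e:survival})); the additional local uniform H\"older continuity of $\sigma$ and $b$ plays no role at this stage. Thus, at the level of functions, $v(t,x)=P^x(\zeta>t)$ holds outright, and the remaining task is purely to identify the regularity class of this object.

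Second, I would invoke the added regularity hypothesis to bring in the result of \cite{KR}. Their Proposition~5.4 states that, once $\sigma$ and $b$ are locally uniformly H\"older continuous on $(l,r)$, the map $(t,x)\mapsto P^x(\zeta>t)$ is the smallest nonnegative classical supersolution of the Cauchy problem (\ref{e:cauchy0}). The H\"older hypothesis is exactly what licenses the classical (parabolic Schauder) regularity underlying the notion of a \emph{classical} supersolution and the comparison/minimality argument carried out in \cite{KR}. Substituting the identity of the previous paragraph then transfers this minimality property verbatim from $P^x(\zeta>t)$ to $v$, which completes the argument.

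The only genuine care required is bookkeeping: one must check that the second-order operator $A$ appearing in (\ref{e:cauchy0}) coincides with the generator used in \cite{KR}, that the initial datum $v(0,\cdot)=1$ matches their normalisation, and that their precise meaning of ``smallest nonnegative classical supersolution'' is the one reproduced in the statement. None of these is a substantive obstacle, and there is no new estimate or construction to carry out; the entire content of the corollary is absorbed by Corollary~\ref{c:survfunction} together with \cite[Proposition~5.4]{KR}. In this sense the ``hard part'' has already been discharged upstream, and the present statement is a packaging of those two facts under the stronger smoothness assumption.
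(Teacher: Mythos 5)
Your proposal is correct and is essentially identical to the paper's own argument: the corollary is obtained by combining Corollary \ref{c:survfunction} (which identifies $v(t,x)$ with $P^x(\zeta>t)$ under Assumption \ref{a:reg} alone) with Proposition 5.4 of \cite{KR} (which, under the local uniform H\"older continuity of $\sigma$ and $b$, characterises $P^x(\zeta>t)$ as the smallest nonnegative classical supersolution of (\ref{e:cauchy0})). No further comment is needed.
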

\begin{remark}
	In fact there is not a unique way of representing the minimal nonnegative classical supersolutions of (\ref{e:cauchy0}). Indeed, if $h$ is the potential of a probability measure $\mu$ on $(l,r)$ satisfying the hypothesis of Theorem \ref{t:rtrpot}, then
	\[
	P^x(\zeta>t)=  h(x)E^{h,x}\Big[\frac{1}{h(X_t)}\exp\Big(-\int_{(l,r)}\frac{s'(y)L^y_t}{2h(y)}\mu(dy)\Big)\Big].
	\] 
	In particular if $\mu(dy)=f(y)m(dy)$ for some $f$, $\int_{(l,r)}\frac{s'(y)L^y_t}{2h(y)}\mu(dy)=\int_0^t \frac{f(X_s)}{h(X_s)}ds$. 
\end{remark}
\begin{remark} \label{r:simulation}
	The recurrent transformation of a transient diffusion can be used to improve the accuracy of discrete Euler approximations of diffusions that are killed when leaving a bounded interval $[a,b]$. Suppose $\zeta$ represents the first exit time from this interval and one is interested in the Monte Carlo simulation of $E^x[F(X_T)\chf_{[T<\zeta]}]$ for some suitable $F$ via a discrete Euler scheme applied to the SDE (\ref{e:sdeX}) for $X$. Gobet \cite{GobetKilled} has shown that the discretisation error is of order $N^{-\half}$, where $N$ is the number of discretisations. This order of convergence is exact and intrinsic to the killing. However, this corresponds to a loss of accuracy compared to the standard Euler scheme applied to a diffusion without killing, where the error is of order $N^{-1}$. On the other hand, the recurrent transformation from Theorem \ref{t:rtrpot} can be used to improve the convergence rate back to $N^{-1}$ since
	\[
	E^x[F(X_T)\chf_{[T<\zeta]}]= h(x)E^{h,x}\Big[F(X_T) \frac{1}{h(X_T)}\exp\Big(-\int_0^T\frac{f(X_s)}{h(X_s)}ds\Big)\Big],
	\]  
	where $h(x)=\int_l^r u(x,y)f(x)m(dx)$ for a nonnegative $f$ with $\int_l^r f(x)m(dx)=1$. This is due to the fact that there is no killing under $R^{h,x}$, i.e. $R^{h,x}(\zeta=\infty)=1$. We will study in more detail the improvement of the discrete Euler scheme for killed diffusions in a subsequent paper.
\end{remark}
\subsection{Connection with Doob's $h$-transform} \label{s:rvsh} It is trivial to check that $(h,M)$-recurrent transform of $X$ has $h^2dm$ as its speed measure. In the specific case considered in Proposition \ref{p:step1} the recurrent transform is a one-dimensional diffusion with scale
\[
s_h(x)=\int_c^x \frac{s'(z)}{(u(z,y))^2}dz,
\]
and the speed measure $(u(z,y))^2m(dz)$. Note that this is not the only diffusion with this scale function and the speed measure. Indeed, if one considers the $h$-transform of $X$ via $h(x)=\frac{u(x,y)}{u(y,y)}$, one obtains a diffusion which amounts to conditioning the paths of $X$ to converge to $y$ and killed at its last exit from $y$. The resulting diffusion is obviously a transient diffusion but has the same scale and the speed (see, e.g. Theorem 6.2 in \cite{EH} or Paragraph 31 in Section II.5 of \cite{BorSal}). The crucial difference between the two transformations is that the $h$-transform  involves killing while the recurrent transform does not. 

Killing of the trajectories in the $h$-transform is also apparent from the following representation. Denoting the law of the $h$-transform by $\tilde{P}^{u,x}$ we deduce 
\[
\tilde{E}^{u,x}[F\chf_{[\zeta>t]}]=\frac{E^x[F u(X_t,y)]}{u(x,y)}=\frac{E^x\left[F\chf_{[G_y>t]}\right]}{h(x)}.
\]
In the above $F$ is an $\cF_t$-measurable random variable and $G_y:=\sup\{t:X_t=y\}$ (see Section 3.9 -- in particular the expression (3.211)-- in \cite{MR} for the  details). The above identity in particular implies
\[
\tilde{P}^{u,x}(\zeta>t)=\frac{P^x(G^y>t)}{h(x)}, \; \forall t\geq 0,
\]
i.e., $\tilde{P}^{u,x}$-distribution of $\zeta$ coincides with the law of $G^y$ under $P^x$ after a normalisation. Observe that $P^x(G_y<\zeta)=1$ since $X$ is transient under $P^x$. 

Given this close relationship between the recurrent transform and the $h$-transform one may wonder whether it is possible to obtain the latter from the former via a killing. This is in fact possible. Indeed, for any $\cF_t$-measurable bounded random variable $F$, one has
\be \label{e:killRT}
E^{h,x}\left[F \exp\left(-\frac{s'(y)}{2 u(y,y)}L^y_t\right)\right]=E^{x}\left[F \frac{u(X_t,y)}{u(x,y)}\right]=\tilde{E}^{u,x}\left[F\chf_{[\zeta>t]}\right].
\ee
Thus, if one kills the trajectories of the recurrent transform at rate $\frac{s'(y)}{2 u(y,y)}L^y_t$, then one obtains the $h$-transform. As such, $h$-transform is subordinate (see Section III.2 of \cite{BG} for a description of subordinate semigroups) to the recurrent transform, i.e. $\tilde{E}^{u,x}\left[F\right]\leq  E^{h,x}\left[F\right]$ for all nonnegative  $\cF_t$-measurable  $F$  that vanishes on $[\zeta,\infty)$. 

We shall next describe how one can implement this killing in practice. To this end define $S_a:=\inf\{t\geq 0:L_t^y >\frac{2 u(y,y)a}{s'(y)}\}$ for $a > 0$ and consider a unit exponential random variable $\alpha$ that is independent from the recurrent process. Then, for any $\cF_t$-measurable bounded random variable $F$
\bean
E^{h,x}\left[F\chf_{[t<S_{\alpha}]}\right]&=&\int_0^{\infty}e^{-a}E^{h,x}\big[F\chf_{[L^y_t\leq \frac{2 u(y,y)a}{s'(y)}]}\big]da=E^{h,x}\left[\int_{\frac{s'(y)L^y_t}{2  u(y,y)}}^{\infty}Fe^{-a} da\right]\\
&=&E^{h,x}\left[F \exp\left(-\frac{s'(y)}{2 u(y,y)}L^y_t\right)\right],
\eean
yielding the relationship in (\ref{e:killRT}).

Note that if the $h$-transform is given by a bounded potential $h$ as in Theorem \ref{t:rtrpot}, similar considerations also show that the $h$-transform can be obtained from the recurrent transform by killing the recurrent transform at the first time that $\int_0^{\cdot}\frac{1}{h(X_s)}dA_s$, where $A$ is the finite variation process  associated to the recurrent transform via Theorem \ref{t:rtrpot}, exceeds a unit exponential time. We leave the easy details to the reader. This in turn gives a very useful recipe for the simulation of $h$-transforms, whose lifetimes often correspond to  some last passage times that are not stopping times (see Remark 11.27 in \cite{ChungWalsh}). 

\section{Limits of recurrent transforms and strict local martingales}\label{s:limit}
Motivation of this section comes from the financial models that we shall treat in more detail in Section \ref{s:BS}. Consistent with the setting therein $X$ will assumed to be a non-negative diffusion in natural scale in this section.  As our focus is on strict local martingales this necessitates the choice of $r=\infty$. We also translate $X$ so that $l=0$. Consequently, $u(x,y)=x\wedge y$ and the recurrent transform in (\ref{e:step1}) reads
\be \label{e:LMrt}
X_t=x+\int_0^t \sigma(X_s)dB_s+\int_0^t\frac{\sigma^2(X_s)}{X_s}\chf_{[X_s\leq y]}ds, \quad x>0.
\ee
We have established in Proposition \ref{p:step1} that the above SDE has a non-explosive weak solution that is unique in law. Moreover, the solution never hits $0$.  If $(X^y)_{y >0}$ denotes the solutions of (\ref{e:LMrt}) indexed by $y$, we notice immediately that the drift term associated to $X^y$ is increasing in $y$. Thus, if the solutions are strong, we may hope that the solutions are increasing in $y$ under a mild hypothesis on $\sigma$. Then, if we let $Y_t:=\lim_{y \rar \infty}X^y_t$, the resulting limit is expected to satisfy
\be \label{e:limitRT}
Y_t=x+\int_0^t \sigma(Y_s)dB_s+\int_0^t\frac{\sigma^2(Y_s)}{Y_s}ds, \quad x>0.
\ee
Since $Y$ is obtained as an increasing limit of $X^y$, it will never hit $0$. However, its behaviour near the infinite boundary, and in particular whether it may explode in finite time, requires a further look. We shall in fact see that $Y$ is the SDE satisfied by the $h$-transform of $X$, where $h(x)=x$, and its explosive behaviour depends exclusively on the strict local martingale property of $X$. 

The next assumption will be sufficient to ensure that the solutions of (\ref{e:LMrt}) are strong and increase in $y$. Note that  one could get the existence and uniqueness of strong solutions under weaker hypothesis. However, the following stronger condition is imposed  since we are also interested in a comparison result for the strong solutions.
\begin{assumption}\label{a:sholder} There exists a strictly increasing function $\rho:[0,\infty)\to [0,\infty)$ with 
\[
\int_{0+}^{\infty}\frac{1}{\rho(a)}da=\infty
\]
such that
\[
(\sigma(x)-\sigma(y))^2 \leq \rho(|x-y|), \qquad x\neq y.
\]
\end{assumption} 
As we will be working with strong solutions in this section let us fix a Brownian motion, $\beta$, on a fixed probability space $(\Om, \cF, (\cF_t), \bbP)$, where $(\cF_t)_{t \geq 0}$ is as in Section \ref{s:prelim}, so that  
\be \label{e:XLM}
X_t= x+\int_0^t \sigma(X_s)d\beta_s, \quad x>0.
\ee
It follows from  Theorem IX.3.5 in \cite{RY} and Corollary 5.3.23 in \cite{KS} that $X$ is the unique strong solution of  (\ref{e:XLM}) under Assumptions \ref{a:reg} and \ref{a:sholder}.

What we would like to achieve next is to pass to a locally absolutely continuous measure, which will support all the solutions of (\ref{e:LMrt}). The next result does not need Assumption \ref{a:sholder}.
\begin{proposition} \label{p:labcQ} Suppose that Assumption \ref{a:reg} is in force and $X$ satisfies (\ref{e:XLM}) on $(\Om, \cF, (\cF_t), \bbP)$ supporting the Brownian motion, $\beta$. There exists a $\bbQ$ on $(\Om, \cF)$  and a sequence of stopping times $(\tau_n)_{n \geq 1}$ such that i) $\lim_{n \rar \infty}\bbQ(\tau_n\leq t)=0$, ii) $\bbQ |_{\cF_{\tau_n}} \ll \bbP |_{\cF_{\tau_n}}$ and iii)
\[
X_t= x+\int_0^t \sigma(X_s)dB_s +\int_0^t\frac{\sigma^2(X_s)}{X_s}\chf_{[X_s\leq 1]}ds,
\]
where $B$ is a $(\Om, \cF, (\cF_t), \bbQ)$-Brownian motion.
\end{proposition}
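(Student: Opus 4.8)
The plan is to exhibit $\bbQ$ as the law of the recurrent transform of Proposition \ref{p:step1} at the level $y=1$, but carried on the fixed space $(\Om,\cF,(\cF_t),\bbP)$ and linked to $\beta$ through an explicit Girsanov density. Since $X$ is on natural scale with $l=0$ and $r=\infty$, we have $u(x,y)=x\wedge y$, so the relevant pair is $h(x)=u(x,1)=x\wedge1$ and $M_t=\exp\left(\frac{s'(1)L^1_t}{2u(1,1)}\right)=\exp\left(\half L^1_t\right)$. Writing $N_t:=h(X_t)M_t=(X_t\wedge1)\exp\left(\half L^1_t\right)$ and $D_t:=N_t/N_0$, I would first apply the It\^o--Tanaka formula to the concave map $x\mapsto x\wedge1$ to obtain
\[
dN_t=N_t\,\frac{\sigma(X_t)}{X_t}\chf_{[X_t<1]}\,d\beta_t,
\]
the local-time contributions cancelling because $X_t=1$ on the support of $dL^1$. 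Thus $D$ is the candidate density $\mathcal E\big(\int_0^\cdot\frac{\sigma(X_s)}{X_s}\chf_{[X_s<1]}\,d\beta_s\big)$, a nonnegative $\bbP$-local martingale with $D_0=1$.

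Next I would fix the localising sequence $\tau_n:=\inf\{t\geq0:X_t\notin(1/n,n)\}$ and set $d\bbQ_n:=D_{\tau_n}\,d\bbP$ on $\cF_{\tau_n}$. The preliminary step is that each stopped density is a uniformly integrable martingale, i.e. $E^\bbP[D_{\tau_n}]=1$. Since $N_{t\wedge\tau_n}\leq\exp\left(\half L^1_{\tau_n}\right)$, this reduces to an exponential moment of the local time: a Dambis--Dubins--Schwarz time change identifies $L^1_{\tau_n}$ with the Brownian local time at $1$ collected before exiting $(1/n,n)$, which is exponentially distributed with rate $\frac{n+1}{2(n-1)}$ when started at $1$; as this rate exceeds $\half$ for every $n\geq2$, the variable $\exp\left(\half L^1_{\tau_n}\right)$ is $\bbP$-integrable. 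Hence $\{N_{t\wedge\tau_n}\}_{t\geq0}$ is dominated by an integrable random variable and is uniformly integrable, each $\bbQ_n$ is a probability measure, and optional sampling makes the family consistent: $\bbQ_n|_{\cF_{\tau_m}}=\bbQ_m$ for $m\leq n$.

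The main obstacle is to upgrade this consistent family of locally absolutely continuous measures to a single $\bbQ$ on all of $\cF$. No global density can exist, because $D$ is \emph{not} uniformly integrable: under $\bbP$ the diffusion is transient with $X_t\rar0$ and $L^1_\infty<\infty$, whence $D_\infty=0$ $\bbP$-a.s., and $\bbQ$ will turn out to be singular with respect to $\bbP$ on $\cF$. I would therefore invoke the extension theorem for consistent families of probability measures on the canonical path space $C(\bbR_+,I)$ equipped with the universally completed natural filtration of the coordinate process -- the same device that underlies the proof of Theorem \ref{t:rectr} -- to produce $\bbQ$ with $\bbQ|_{\cF_{\tau_n}}=\bbQ_n$ for every $n$. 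Property (ii) is then immediate, since $\bbQ|_{\cF_{\tau_n}}=D_{\tau_n}\,\bbP|_{\cF_{\tau_n}}$.

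It remains to read off the dynamics and to verify (i) and (iii). By Girsanov's theorem applied on each $\cF_{\tau_n}$, the process $B_t:=\beta_t-\int_0^t\frac{\sigma(X_s)}{X_s}\chf_{[X_s<1]}\,ds$ is a $\bbQ$-Brownian motion up to $\tau_n$, and substituting $d\beta_s=dB_s+\frac{\sigma(X_s)}{X_s}\chf_{[X_s<1]}\,ds$ into $X_t=x+\int_0^t\sigma(X_s)\,d\beta_s$ produces the asserted SDE on $[0,\tau_n]$. Because $\bbQ$ agrees with the law $R^{h,x}$ of the recurrent transform of Proposition \ref{p:step1}, which is a non-explosive recurrent diffusion on $(0,\infty)$ reaching neither boundary, one has $\tau_n\uparrow\infty$ $\bbQ$-a.s.; this simultaneously promotes $B$ to a $\bbQ$-Brownian motion on $[0,\infty)$ and extends the SDE to all $t\geq0$, giving (iii), and yields $\bbQ(\tau_n\leq t)\downarrow\bbQ(\lim_n\tau_n\leq t)=0$, giving (i). Replacing $\chf_{[X<1]}$ by $\chf_{[X\leq1]}$ is harmless, as $\{s:X_s=1\}$ is Lebesgue-null almost surely.
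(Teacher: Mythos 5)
Your construction is, in substance, the paper's own proof: the density is built from the same recurrent-transform pair $h(x)=x\wedge 1$, $M_t=\exp\big(\half L^1_t\big)$ of Proposition \ref{p:step1} with $y=1$, your It\^o--Tanaka computation of $dN_t$ is correct, Girsanov gives the drifted SDE up to each $\tau_n$, and the single measure $\bbQ$ is produced by the Stroock--Varadhan consistency theorem (Theorem 1.3.5 of \cite{SV}). The one substantive difference is the localisation: the paper takes $\tau_n=\inf\{t\geq 0: L^1_t\geq n\}$, so that $h(X^{\tau_n})M^{\tau_n}\leq e^{n/2}$ is bounded and a uniformly integrable martingale for free, whereas your exit times of $(1/n,n)$ force you to establish $E^{\bbP}\big[\exp\big(\half L^1_{\tau_n}\big)\big]<\infty$. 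Your derivation of this via the exponential law of the local time at the exit time (rate $\frac{n+1}{2(n-1)}>\half$; for $X_0=x\neq 1$ the strong Markov property at $T_1$ reduces to the started-at-$1$ case) is correct, just more labour than the paper's choice. A side remark: the device underlying the proof of Theorem \ref{t:rectr} is the supermartingale multiplicative-functional theorem of \cite{GTMP}, not the consistency theorem, which appears only in this proposition.

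The genuine flaw is the order in which you deploy the extension theorem. That theorem does not apply to an arbitrary consistent family $(\bbQ_n, \cF_{\tau_n})_{n\geq 1}$: its hypothesis is precisely that $\lim_{n\rar\infty}\bbQ_n(\tau_n\leq t)=0$ for every $t$, and this must be verified \emph{before} any measure $\bbQ$ exists. You invoke the theorem first and then obtain (i) from the assertion that ``$\bbQ$ agrees with the law $R^{h,x}$'' --- an assertion which is never proved and whose natural justification (agreement of $\bbQ$ with $\bbQ_n$ on each $\cF_{\tau_n}$) presupposes the extension; as written this is circular. Nor is the hypothesis a formality: it is exactly where the strict local martingale subtlety lives. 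Running your construction with $h(x)=x$ in place of $x\wedge 1$ when $X$ is a strict local martingale produces a perfectly consistent family of probability measures for which the limit condition fails (by Proposition \ref{p:Xvshtr} the corresponding $h$-process explodes in finite time), and no extension exists on $C(\bbR_+,(0,\infty))$. The repair is the reordering the paper performs: first identify $\bbQ_n=R^{h,x}$ on $\cF_{\tau_n}$ --- this follows from (\ref{e:ACrectr}) in Theorem \ref{t:rectr} applied to $T=t\wedge\tau_n$, or from weak uniqueness of (\ref{e:LMrt}) up to $\tau_n$ --- then conclude $\bbQ_n(\tau_n\leq t)=R^{h,x}(\tau_n\leq t)\rar 0$ because the recurrent transform neither explodes nor approaches $\{0,\infty\}$, and only then apply \cite{SV} to obtain $\bbQ$; properties (i), (ii) are then immediate, and (iii) follows from Girsanov together with (i) exactly as in your final paragraph.
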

\begin{proof}
Consider the $(h,M)$ transform in Proposition \ref{p:step1}, where $y=1$, and set $\tau_n:=\inf\{t\geq 0:L^y_t \geq n\}$.  Then,
$h(X_{t \wedge \tau_n}) M_{t \wedge \tau_n}$ is a bounded martingale that defines a $\bbQ_n$ on $\cF_{\tau_n}$. Note that $\bbQ_n(\tau_n \leq t)= R^{h,x}(L^y_t \geq n)$ using the notation of Proposition \ref{p:step1}. Thus, $\lim_{n \rar \infty}Q_n(\tau_n \leq t)=\lim_{n \rar \infty}R^{h,x}(L^y_t \geq n)=R^{h,x}(L^y_t= \infty)=0$, and i) and ii) follow from Theorem 1.3.5 in \cite{SV}. 

Moreover, since $\bbQ$ agrees with $\bbQ_n$ on $\cF_{\tau_n}$, we have
\[
X_t= x+\int_0^t \sigma(X_s)dB_s +\int_0^t\frac{\sigma^2(X_s)}{X_s}\chf_{[X_s\leq 1]}ds, \quad t<\tau_n,
\]
where 
\[
B_t= \beta_t-\int_0^t \frac{\sigma(X_s)}{X_s}\chf_{[X_s\leq 1]}ds, \quad t <\tau_n.
\]
As such, $B$ is a Brownian motion stopped at $\tau_n$. Invoking the fact that $\lim_{n \rar \infty} \bbQ(\tau_n \leq t)=0$ yields iii).
\end{proof}

The above proposition constructs a locally absolutely continuous probability measure, $\bbQ$, and a $\bbQ$-Brownian motion, $B$.  Thus, once we impose Assumption \ref{a:sholder}, (\ref{e:LMrt}) will possess the pathwise uniqueness property by virtue of Proposition IX.3.1 and Lemma IX.3.1 in \cite{RY}. Combining this with  Corollary 5.3.23 in \cite{KS} we arrive at the following.
\begin{proposition} Suppose that Assumptions \ref{a:reg} and \ref{a:sholder} hold.  Let $B$ and $(\Om, \cF, (\cF_t), \bbQ)$ be as in Proposition \ref{p:labcQ}. Then, for each $y>0$, there exists a unique strong solution to (\ref{e:LMrt}).
\end{proposition}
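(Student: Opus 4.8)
The plan is to establish the two ingredients of the Yamada--Watanabe theory---weak existence and pathwise uniqueness---and then to invoke it in the form of Corollary 5.3.23 of \cite{KS} so as to upgrade a weak solution to a pathwise-unique strong one, realised on the prescribed filtered space $(\Om,\cF,(\cF_t),\bbQ)$ carrying the Brownian motion $B$ of Proposition \ref{p:labcQ}.

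Weak existence is essentially already in hand. Here $X$ is a nonnegative diffusion in natural scale on $(0,\infty)$, hence transient with $u(x,y)=x\wedge y$, and for each fixed $y>0$ equation (\ref{e:LMrt}) is exactly the recurrent-transform SDE (\ref{e:step1}) of Proposition \ref{p:step1}, since $\sigma^2(x)u_x(x,y)/u(x,y)=\frac{\sigma^2(x)}{x}\chf_{[x<y]}$. That proposition, together with the discussion following (\ref{e:LMrt}), furnishes a weak solution that is non-explosive on $[0,\infty)$, stays in $(0,\infty)$, and is unique in law. The genuinely delicate point is therefore pathwise uniqueness: the coefficient $\sigma$ is only assumed to satisfy the Yamada-type modulus of Assumption \ref{a:sholder} (in general no better than $\half$-H\"older), while the drift $\frac{\sigma^2(x)}{x}\chf_{[x\le y]}$ is singular at $0$ and discontinuous at $y$. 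A direct Tanaka estimate on $X-X'$ will not close, because in the region $\{X,X'\le y\}$ the drift inherits only a $\sqrt{\rho}$-modulus, which is too weak for a Gronwall argument.

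The device that removes this obstruction is to pass to natural scale. Writing $s_h$ for the scale function of (\ref{e:LMrt}), determined by $s_h'(x)=(x\wedge y)^{-2}$, the process $Z:=s_h(X)$ solves the \emph{driftless} equation $dZ_t=\what\sigma(Z_t)\,dB_t$, where $\what\sigma(z):=s_h'(x)\sigma(x)$ with $x=s_h^{-1}(z)$; the drift cancels by the defining ODE of the scale function. Since $-s_h(0+)=s_h(\infty)=\infty$, the map $s_h$ is a strictly increasing $C^1$-bijection of $(0,\infty)$ onto $\bbR$ whose inverse is locally Lipschitz on compacts, and $Z$ is recurrent and non-explosive on $\bbR$. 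On each compact the coefficient $\what\sigma$, being the product of a locally Lipschitz factor with $\sigma\circ s_h^{-1}$, inherits a Yamada--Watanabe modulus from Assumption \ref{a:sholder}: one checks that a modulus of the form $u\mapsto\sqrt{\rho(Lu)}+Cu$ still satisfies $\int_{0+}\eta(u)^{-2}\,du=\infty$ whenever $\int_{0+}\rho(u)^{-1}\,du=\infty$. Pathwise uniqueness for $Z$ then follows, after localisation, from the one-dimensional driftless criterion (Lemma IX.3.1 and Theorem IX.3.5 in \cite{RY}): the Tanaka expansion of $|Z-Z'|$ has vanishing local time at $0$ by the lemma, no drift term to control, and a second-order term that the modulus of $\what\sigma$ drives to zero along the standard Yamada approximation. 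Transporting back through the homeomorphism $s_h$ yields $X=X'$, i.e.\ pathwise uniqueness for (\ref{e:LMrt}).

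With weak existence from Proposition \ref{p:step1} and pathwise uniqueness just established, Corollary 5.3.23 of \cite{KS} produces a strong, pathwise-unique solution adapted to the augmented filtration of the driving noise; realising it on the fixed space $(\Om,\cF,(\cF_t),\bbQ)$ with the Brownian motion $B$ of Proposition \ref{p:labcQ} gives the asserted unique strong solution for every $y>0$. The main obstacle throughout is the pathwise-uniqueness step, and the scale transformation---equivalently, applying the local-time lemma to the driftless scaled process rather than to $X$ itself---is precisely what neutralises the weak regularity of $\sigma$ and the singular drift.
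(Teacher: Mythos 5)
Your proof is correct, and its first and last steps (weak existence from Proposition \ref{p:step1}, upgrade to a unique strong solution via Corollary 5.3.23 of \cite{KS}) coincide with the paper's; but the middle step, pathwise uniqueness, follows a genuinely different route. The paper disposes of that step by citing the one-dimensional results of Section IX.3 of \cite{RY}: under Assumption \ref{a:sholder} the local time at $0$ of the difference of two solutions of (\ref{e:LMrt}) driven by the same Brownian motion vanishes (a statement involving only the martingale parts, hence indifferent to the drift), and the companion proposition that the pointwise maximum of two such solutions is again a solution then converts uniqueness in law --- already available from the Engelbert--Schmidt theory behind Proposition \ref{p:step1} --- into pathwise uniqueness. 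So the irregular drift $\sigma^2(x)x^{-1}\chf_{[x\leq y]}$ never has to be estimated at all; your opening remark that a ``direct Tanaka estimate will not close'' is accurate only against the naive Yamada--Watanabe theorem with Lipschitz drift, not against the argument the paper actually uses. Your alternative --- push $X$ through the scale $s_h$ of the transformed diffusion so that $Z=s_h(X)$ is driftless, check that $\hat{\sigma}$ keeps a Yamada modulus on compacts, conclude by the driftless criterion plus localisation, and transport back through the homeomorphism --- is sound: $s_h'(x)=(x\wedge y)^{-2}$ is locally Lipschitz and locally bounded away from $0$, so the It\^{o}--Tanaka expansion of $s_h(X)$ has no singular local-time term and the drift cancels off the Lebesgue-null set $\{X_s=y\}$, and your auxiliary claim that $\eta(u)=\sqrt{\rho(Lu)}+Cu$ still satisfies $\int_{0+}\eta^{-2}(u)\,du=\infty$ is true (immediate when $\rho$ is concave, since then $\rho(u)\geq cu$ near $0$; for a general increasing $\rho$ it requires a short argument splitting a neighbourhood of $0$ according to whether $\rho(Lu)\geq u^2$ or not), though this is a real lemma you should prove rather than leave at ``one checks''. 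The trade-off between the two routes: the paper's is shorter and works verbatim for any measurable drift once weak uniqueness is known, because the maximum-of-solutions trick never inspects the drift, but it does consume uniqueness in law as an input; yours never uses uniqueness in law in the uniqueness step and explains structurally why this particular drift is harmless --- it is exactly of scale-function form --- but for the same reason it does not generalise beyond drifts that can be scaled away.
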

As mentioned earlier Assumption \ref{a:sholder} will also imply that the solutions of (\ref{e:LMrt}) are increasing in $y$.
\begin{proposition} Suppose that Assumptions \ref{a:reg} and \ref{a:sholder} hold.  Let $B$ and $(\Om, \cF, (\cF_t), \bbQ)$ be as in Proposition \ref{p:labcQ} and denote by $X^y$ the unique strong solution of (\ref{e:LMrt}). Then, $\bbQ(X^{y_0}_t\leq X^{y_1}_t, \, \forall t \geq 0)=1$ whenever $y_0\leq y_1$.  
\end{proposition}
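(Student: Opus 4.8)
The plan is to derive the ordering from the comparison theorem for one-dimensional stochastic differential equations, after first isolating the one genuinely delicate point: the drifts are not Lipschitz. Write $b^{y}(z):=\sigma^2(z)z^{-1}\chf_{[z\le y]}$, so that for fixed $y$ the process $X^y$ is the unique strong solution (established in the preceding proposition) of $dX^y_t=\sigma(X^y_t)\,dB_t+b^{y}(X^y_t)\,dt$, $X^y_0=x$, driven by the same Brownian motion $B$ and carrying the same diffusion coefficient $\sigma$. The only difference between $X^{y_0}$ and $X^{y_1}$ lies in the drift, and since $\sigma^2(z)z^{-1}\ge 0$ we have the pointwise ordering $b^{y_0}(z)\le b^{y_1}(z)$ for all $z\in(0,\infty)$ whenever $y_0\le y_1$. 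This pointwise ordering of the drifts, together with the common diffusion coefficient satisfying the Yamada-type condition $(\sigma(x)-\sigma(y))^2\le\rho(|x-y|)$ of Assumption \ref{a:sholder} (equivalently $|\sigma(x)-\sigma(y)|\le\sqrt{\rho(|x-y|)}$, with $\int_{0+}\rho^{-1}=\infty$), is exactly what the comparison theorem feeds on.

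First I would reduce to the case of a Lipschitz dominating drift. Since $b^{y_1}$ is locally bounded on $(0,\infty)$ and upper semicontinuous (its only discontinuity being the downward jump at $y_1$), I can choose locally Lipschitz functions $b^{(n)}$ on $(0,\infty)$ with $b^{(n)}\ge b^{y_1}\ge b^{y_0}$ and $b^{(n)}\downarrow b^{y_1}$ pointwise, for instance by Lipschitz regularisation (sup-convolution) of $b^{y_1}$ on compact subsets. Let $X^{(n)}$ denote the unique strong solution driven by $B$ with drift $b^{(n)}$ and $X^{(n)}_0=x$. I would then apply the comparison theorem (see, e.g., Theorem IX.3.7 in \cite{RY}) to the pair $(X^{y_0},X^{(n)})$: the diffusion coefficient is common and satisfies Assumption \ref{a:sholder}, the dominating drift $b^{(n)}$ is Lipschitz, and $b^{y_0}\le b^{(n)}$. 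The point worth stressing is that only the Lipschitz regularity of the larger drift and the pointwise ordering are used, so the discontinuity of $b^{y_0}$ is harmless: in the Tanaka estimate for $(X^{y_0}_t-X^{(n)}_t)^+$ the term $b^{y_0}(X^{y_0}_s)-b^{(n)}(X^{y_0}_s)$ already has the right sign, while $b^{(n)}(X^{y_0}_s)-b^{(n)}(X^{(n)}_s)$ is controlled by $K_n(X^{y_0}_s-X^{(n)}_s)^+$; meanwhile, by the occupation-times formula, Assumption \ref{a:sholder} forces the local time at $0$ of the difference to vanish, so no boundary term survives. A Gronwall argument (after localising by the stopping times that keep both processes in a compact $[1/N,N]\subset(0,\infty)$, which is legitimate since the solutions never reach $0$ and are non-explosive) then yields $\bbQ(X^{y_0}_t\le X^{(n)}_t,\ \forall t)=1$. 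The same reasoning gives $X^{y_1}\le X^{(n)}$ and $X^{(n+1)}\le X^{(n)}$ for every $n$.

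It remains to pass to the limit $n\to\infty$, and this is where the main obstacle lies. The sequence $X^{(n)}$ is decreasing and bounded below by $X^{y_1}$, hence converges pathwise to some $X^{(\infty)}\ge X^{y_1}$; the result follows once $X^{(\infty)}=X^{y_1}$, for then $X^{y_0}\le X^{(n)}\downarrow X^{y_1}$. To identify the limit I would verify that $X^{(\infty)}$ is itself a solution of (\ref{e:LMrt}) and invoke the pathwise uniqueness already in force. The delicate step is the convergence of the integrals $\int_0^t\sigma(X^{(n)}_s)\,dB_s\to\int_0^t\sigma(X^{(\infty)}_s)\,dB_s$ and, above all, $\int_0^t b^{(n)}(X^{(n)}_s)\,ds\to\int_0^t b^{y_1}(X^{(\infty)}_s)\,ds$: one must control the behaviour near the single discontinuity point $\{y_1\}$ of $b^{y_1}$, which is legitimate because the occupation time of a single point is zero, and localise away from $0$ and $\infty$ as above. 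Equivalently, one may bypass the explicit identification by appealing to a stability theorem for one-dimensional SDEs whose common diffusion coefficient satisfies the Yamada condition and whose limiting equation enjoys pathwise uniqueness, so that $b^{(n)}\to b^{y_1}$ forces $X^{(n)}\to X^{y_1}$ in probability, uniformly on compact time intervals. Either route closes the argument and delivers $\bbQ(X^{y_0}_t\le X^{y_1}_t,\ \forall t\ge0)=1$.
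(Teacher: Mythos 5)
Your argument is correct and takes essentially the same route as the paper's proof: both rest on the comparison theorem for one-dimensional SDEs under the Yamada-type condition of Assumption \ref{a:sholder} (the paper invokes Theorem 1.1 in Chap.~VI of \cite{IW}, you invoke Theorem IX.3.7 of \cite{RY}), both localise away from the boundary $0$, and both identify a decreasing limit of upper approximations with $X^{y_1}$ via the uniqueness of solutions of (\ref{e:LMrt}). The only difference is cosmetic: the paper compares $X^{y_0}$ directly with a process $Z^{\eps}$ carrying the discontinuous drift $b_1+\eps$, a Lipschitz function serving only as the separating function in the Ikeda--Watanabe formulation, and then lets $\eps\downarrow 0$; you instead give the approximating processes the Lipschitz drifts $b^{(n)}\downarrow b^{y_1}$ themselves, after which the limit identification (dominated convergence for the stochastic integrals, a.e.\ convergence of the drift terms away from the single discontinuity point) proceeds exactly as in the paper.
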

\begin{proof}
Let $b_i(x)=\frac{\sigma^2(x)}{x}\chf_{[x\leq y_i]}$ for $i=0,1$, and define $b^{\eps}(x)=b_1(x)+\eps.$ Observe that for any sufficiently small $\delta >0$  there exists a Lipschitz function, $g$, such that $b_0(x) \leq g(x)\leq b^{\eps}(x)$ for $x > \delta$ due to the continuity of $\sigma$. Thus, it follows from Theorem 1.1 in Chap. VI of \cite{IW} that $X^{y_0}_t \leq Z^{\eps}_t$ for all $t <T_{\delta}$, where $T_{\delta}=\inf\{t\geq 0: X^{y_0}_t  \leq \delta\}$ and
\[
Z^{\eps}_t =x + \int_0^t \sigma(Z^{\eps}_s)dB_s + \int_0^t b^{\eps}(Z^{\eps}_s)ds.
\]
Note that since $\sigma$ satisfies (\ref{e:ESR}) and Assumption \ref{a:sholder} the above SDE has a unique strong solution. Since $\delta$ is arbitrary and $\lim_{\delta\rar 0}T_{\delta}=\infty, \bbQ$-a.s., we immediately deduce that $X^{y_0}_t \leq Z^{\eps}_t$ for all $t\geq 0$.  Next, we claim that $Z^{\eps}_t \rar X^{y_1}_t$ as $\eps \rar 0$ for $t <T_{\delta}$. 

Indeed, we can again find a Lipschitz continuous function between $b^{\eps_0}$ and $b^{\eps_1}$ whenever $\eps_0<\eps_1$ on $(\delta, \infty)$ for any $\delta >0$. Therefore, the same theorem in \cite{IW} yields that $Z^{\eps}_t$ is increasing in $\eps$ for each $t>0$. Set $Z_t=\lim_{\eps \rar 0}Z^{\eps}_t$. It follows from the continuity of $\sigma$ and the  dominated convergence theorem for stochastic integrals that
\[
\lim_{\eps \rar 0}\int_0^t \sigma(Z^{\eps}_s)dB_s=\int_0^t \sigma(Z_s)dB_s.
\]
Also observe that $Z_t <y_1$ if and only if $Z^{\eps}_t<y_1$ for all but finitely many $\eps$ (number possibly depending on $\om$) since $Z^\eps$ is decreasing to $Z$ as $\eps \rar 0$. Thus, $b^{\eps}(Z^{\eps}_t)\rar b_1(Z_t)$ as $\eps \rar 0$ for each $t>0$. Since $b^{\eps_n}$ is uniformly bounded on $(\delta, \infty)$ given any $(\eps_n)_{n \geq 1}$ converging to $0$, we deduce from Lebesgue's dominated convergence theorem that
\[
\lim_{\eps \rar 0}\int_0^{t\wedge S_{\delta}} b^{\eps}(Z^{\eps}_s)ds= \int_0^{t\wedge S_{\delta}}  b_1(Z_s)ds,
\]
where $ S_{\delta}=\inf\{t>0:Z_t <\delta\}$. Thus, we have shown that $Z$ solves (\ref{e:LMrt}) with $y=y_1$ up to  $S_{\delta}$. Since $X^{y_1}$ is the unique solution of this equation, we therefore establish that $X^{y_1}_t=\lim_{\eps \rar 0}Z^{\eps}_t$ for $t \leq S_{\delta}= \inf\{t>0:X^{y_1}_t <\delta\} $.  Therefore,  $X^{y_0}_t \leq X^{y_1}_t$ for $t <T_{\delta}$. As before, we can pass to the limit as $\delta \rar 0$ and concludefor every $t \geq 0$ that   $X^{y_0}_t \leq X^{y_1}_t$ . Moreover, due to the continuity of $X^{y_i}$s, we may choose a null set independent of $t$ to deduce  $\bbQ(X^{y_0}_t\leq X^{y_1}_t, \, \forall t \geq 0)=1$.
\end{proof}

Thanks to the above result $X^y$ is increasing in $y$ and we can define $Y_t= \lim_{y \rar \infty}X^y_t$. Moreover, the arguments used in the proof of the above proposition yields the following corollary. 
\begin{corollary}\label{c:rtlimit} Suppose that Assumptions \ref{a:reg} and \ref{a:sholder} hold and let $X^y$ be the unique strong solution of (\ref{e:LMrt}), where $B$ and $(\Om, \cF, (\cF_t), \bbQ)$ are as in Proposition \ref{p:labcQ}. Then, $Y$ is the unique strong solution of (\ref{e:limitRT}), where $Y_t=\lim_{y \rar \infty}X^y_t$.
\end{corollary}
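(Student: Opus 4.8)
The plan is to pass to the limit $y \to \infty$ directly in the SDE (\ref{e:LMrt}), exploiting the monotonicity $X^y \uparrow Y$ already established, while localising at the level-crossing times of $Y$ so as to keep all the processes inside a compact subinterval of $(0,\infty)$ on which both $\sigma$ and the drift $x \mapsto \sigma^2(x)/x$ are well-behaved. First I would record that the monotone limit $Y_t := \lim_{y \to \infty} X^y_t$ exists in $(0,\infty]$ and that $Y_t \geq X^{y_0}_t > 0$ for any fixed $y_0$, so $Y$ never reaches $0$; in particular, on any $[0,t]$ the continuous process $X^{y_0}$ furnishes an a.s. strictly positive lower bound $\delta(\om) := \inf_{s \leq t} X^{y_0}_s > 0$.

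Next I would introduce the localisation. Fix $m > x$ and set $T^y_m := \inf\{t : X^y_t \geq m\}$. Since $X^y$ is nondecreasing in $y$, $T^y_m$ is nonincreasing in $y$, so $T_m := \lim_{y \to \infty} T^y_m = \inf_y T^y_m \leq T^y_m$. The decisive elementary observation is that for $s < T_m$ and $y \geq m$ one has $X^y_s < m \leq y$, whence $\chf_{[X^y_s \leq y]} = 1$; thus for every $y \geq m$ the process $X^y$ in fact solves (\ref{e:limitRT}) on $[0,T_m)$, the indicator having disappeared.

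With this in hand the limit passage mirrors the proof of the preceding proposition. On $[0, t \wedge T_m)$ all the $X^y$ take values in the compact set $[\delta,m] \subset (0,\infty)$, where $\sigma$ (continuous under Assumption \ref{a:sholder}) and $x \mapsto \sigma^2(x)/x$ are bounded. Letting $y \to \infty$ I would verify that $X^y_{t \wedge T_m} \to Y_{t \wedge T_m}$; that $\int_0^{t \wedge T_m} \sigma(X^y_s)\,dB_s \to \int_0^{t \wedge T_m} \sigma(Y_s)\,dB_s$ by the dominated convergence theorem for stochastic integrals (integrands converging pointwise by continuity of $\sigma$ and dominated by $\sup_{[\delta,m]}|\sigma|$); and that $\int_0^{t \wedge T_m} \frac{\sigma^2(X^y_s)}{X^y_s}\chf_{[X^y_s \leq y]}\,ds \to \int_0^{t \wedge T_m} \frac{\sigma^2(Y_s)}{Y_s}\,ds$ by ordinary dominated convergence, the indicator being identically $1$ for $y \geq m$. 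Passing to the limit in (\ref{e:LMrt}) stopped at $T_m$ then yields that $Y$ solves (\ref{e:limitRT}) on $[0,T_m)$, and since $\zeta := \sup_m T_m = \lim_{m \to \infty} T_m$ is precisely the (possibly finite) explosion time to $\infty$, letting $m \to \infty$ shows $Y$ is a strong solution of (\ref{e:limitRT}) up to $\zeta$; in particular $Y$ is continuous and finite on $[0,\zeta)$.

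Uniqueness follows from pathwise uniqueness for (\ref{e:limitRT}), which holds on every compact subinterval $[\delta,m]$ of $(0,\infty)$ by the same machinery already invoked for (\ref{e:LMrt}) — Proposition IX.3.1 and Lemma IX.3.1 in \cite{RY} together with Corollary 5.3.23 in \cite{KS} — since Assumption \ref{a:sholder} supplies the Yamada modulus for $\sigma$ while the drift $\sigma^2(\cdot)/\cdot$ is bounded and continuous there, hence no worse than the discontinuous drift of (\ref{e:LMrt}). Concretely, any competing strong solution $Y'$ driven by $B$ with $Y'_0 = x$ coincides, up to its first passage to level $m$, with $X^y$ for $y \geq m$, hence with $Y$; letting $m \to \infty$ and $\delta \to 0$ identifies $Y' = Y$ on $[0,\zeta)$. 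The main obstacle, and the reason the localisation through the $T_m$ is indispensable, is exactly that $Y$ need not live on all of $[0,\infty)$: it may explode to $\infty$ in finite time (the phenomenon tied to the strict local martingale property of $X$ that the following results analyse). One must therefore read ``strong solution'' as a solution up to the explosion time $\zeta$ and cannot dominate the stochastic integral globally; confining the processes to $[\delta,m]$, with $\delta$ from the lower bound $X \geq X^{y_0} > 0$ and $m$ from the level-crossing time, is what renders the dominated-convergence arguments legitimate.
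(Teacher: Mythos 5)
Your proof is correct, and its skeleton — monotone limit, dominated convergence for the stochastic and Lebesgue integrals, identification via pathwise uniqueness — is exactly what the paper has in mind: the paper offers no separate proof of this corollary, saying only that ``the arguments used in the proof of the above proposition'' carry over. Where you genuinely differ, and improve on a literal transcription of those arguments, is in the localisation. The proposition's limit passage was built for a \emph{decreasing} limit ($Z^{\eps}\downarrow Z$), where an upper bound is free and only the lower barrier $\delta$ needs care; for the \emph{increasing} limit $X^{y}\uparrow Y$ one must control the processes from above and confront the fact that $Y$ may explode in finite time — indeed it does precisely when $X$ is a strict local martingale, by Proposition \ref{p:Xvshtr}, which is the whole point of the section. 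Your stopping times $T_m$ handle this cleanly: for $y\geq m$ and $s<T_m$ the indicator $\chf_{[X^y_s\leq y]}$ is identically $1$, so each such $X^y$ already solves (\ref{e:limitRT}) before $T_m$, the drift convergence becomes trivial, all processes live in $[\delta,m]$ so both dominated-convergence steps are legitimate, continuity and finiteness of $Y$ up to $T_m$ come from the limit equation itself, and $\zeta=\sup_m T_m$ is the explosion time (to make this last identification airtight one can squeeze $T_m$ between the first times $Y$ reaches level $m$ and exceeds level $m$, using $X^{y'}_{T^{y'}_m}=m$ for the lower bound). Your uniqueness step — any solution of (\ref{e:limitRT}) agrees with $X^y$, $y\geq m$, before reaching $m$, since both solve the same equation there — rests on pathwise uniqueness for the common localised equation, which you justify by the same citations (Proposition IX.3.1 and Lemma IX.3.1 in \cite{RY}, Corollary 5.3.23 in \cite{KS}) the paper invokes for (\ref{e:LMrt}); this transfer is legitimate because the drift $\sigma^2(\cdot)/\cdot$ has the same form (namely $\sigma$ times a function locally bounded on $(0,\infty)$) as the drift of (\ref{e:LMrt}), and the paper itself relies on exactly this implicit transfer when it asserts uniqueness in the corollary. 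In short: same strategy as the paper, with your $T_m$-localisation supplying precisely the step that is specific to the increasing limit and that the paper leaves unstated.
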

It can be checked easily that the scale function of the diffusion in (\ref{e:limitRT}) is $1-\frac{1}{x}$. Thus, the solution never hits $0$ and diverges to $\infty$ as $t \rar \infty$. Whether the explosion happens in finite time depends on the martingale property of $X$. Note that if one is content with weak solutions, (\ref{e:limitRT}) has a unique weak solution when $\sigma$ satisfies (\ref{e:ESR}).
\begin{proposition} \label{p:Xvshtr} Suppose that $\sigma$ satisfies (\ref{e:ESR}) and consider a weak solution,  $Y$, of  (\ref{e:limitRT}). Let $Q^x$ be the law of the solution of (\ref{e:limitRT}). Then, $Q^x(\lim_{t \rar \infty}Y_t=\infty)=Q^x(Y_t>0, \; \forall t>0)=1$. In particular, $\zeta=\inf\{t: Y_t =\infty\},\, Q^x$-a.s. for each $x >0$. Moreover, $Q^x(\zeta=\infty)=1$ if and only if $X$ is a martingale, where $X$ is given by (\ref{e:XLM}).
\end{proposition}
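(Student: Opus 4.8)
The plan is to dispatch the qualitative claims (the behaviour of $Y$ and the identification of $\zeta$) by a scale-function analysis, and then to reduce the martingale dichotomy to a single change-of-measure identity. First I would note that, since $\sigma$ satisfies (\ref{e:ESR}) and the drift $\frac{\sigma^2(x)}{x}$ makes $\frac{1}{\sigma^2(x)}+\frac{1}{x}$ locally integrable on $(0,\infty)$, equation (\ref{e:limitRT}) has a weak solution unique in law, so $Q^x$ is well defined. A direct computation gives the scale density $s_Y'(x)=\exp\big(-2\int_c^x\frac{du}{u}\big)=c^2/x^2$, hence $s_Y(x)=1-\frac{1}{x}$ after normalising $s_Y(\infty)=1$. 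Thus $s_Y(0+)=-\infty$ while $s_Y(\infty)=1<\infty$, and the boundary dichotomy recalled in Section \ref{s:prelim} (the case $s(l)=-\infty$ with $s(r)$ finite, cf.\ (\ref{e:psiif})) yields $Q^x(\lim_{t\rar\infty}Y_t=\infty)=1$ together with the inaccessibility of $0$, i.e.\ $Q^x(Y_t>0,\ \forall t>0)=1$. Since $\infty$ is then the only boundary that can be reached, this identifies $\zeta=\inf\{t:Y_t=\infty\}$, $Q^x$-a.s., proving the first two assertions.

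For the equivalence I would realise $Y$ as the Doob $h$-transform of $X$ with the harmonic function $h(x)=x$ (harmonic because $X$ is on natural scale, so $\bbA h\equiv0$). Working on the probability space of Proposition \ref{p:labcQ}, on which $dX_t=\sigma(X_t)d\beta_t$ under $\bbP$, set $\tau_n:=\inf\{t\geq0:X_t\notin(1/n,n)\}$. As $X$ is a nonnegative local martingale, $X_{\cdot\wedge\tau_n}$ is bounded, hence a true martingale, and $\frac{X_{t\wedge\tau_n}}{x}=\mathcal E\big(\int_0^{\cdot}\tfrac{\sigma(X_s)}{X_s}d\beta_s\big)_{t\wedge\tau_n}$ defines a probability $\bbQ_n$ on $\cF_{\tau_n}$. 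Girsanov's theorem shows that under $\bbQ_n$ the process $B_t:=\beta_t-\int_0^t\frac{\sigma(X_s)}{X_s}ds$ is a Brownian motion up to $\tau_n$ and that $X$ solves (\ref{e:limitRT}) on $[0,\tau_n]$. Exactly as in Proposition \ref{p:labcQ} the family $(\bbQ_n)$ is consistent, and uniqueness in law for (\ref{e:limitRT}) lets me identify the limiting measure with $Q^x$, the law of $Y$. The identity I extract from this construction is that, for bounded $F\in\cF_t$,
\[
E^{Q,x}\!\left[F\,\chf_{[t<\tau_n]}\right]=\frac{1}{x}\,E^{x}\!\left[F\,\chf_{[t<\tau_n]}\,X_t\right],
\]
using that the bounded martingale $X_{\cdot\wedge\tau_n}$ satisfies $E^{x}[X_{\tau_n}\mid\cF_t]=X_t$ on $[t<\tau_n]$.

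It then remains to let $n\rar\infty$ with $F\equiv1$. On the left, Parts one and two give $\tau_n\uparrow\zeta$ and $[t<\tau_n]\uparrow[t<\zeta]$ under $Q^x$, since $Y$ avoids $0$ and explodes only at $\infty$; on the right, $X$ cannot reach $\infty$ in finite time (as $s(\infty)=\infty$) and is absorbed at $0$, so $X_t\,\chf_{[t<\tau_n]}\uparrow X_t$ and monotone convergence yields the mass identity
\[
Q^x(\zeta>t)=\frac{E^{x}[X_t]}{x},\qquad t\geq0.
\]
Letting $t\rar\infty$ gives $Q^x(\zeta=\infty)=\lim_{t\rar\infty}E^{x}[X_t]/x$. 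If $X$ is a true martingale then $E^{x}[X_t]=x$ for every $t$, so $Q^x(\zeta=\infty)=1$; conversely, if $X$ is a strict local martingale there is $t_0>0$ with $E^{x}[X_{t_0}]<x$, whence $Q^x(\zeta\leq t_0)>0$ and $Q^x(\zeta=\infty)<1$. This is the claimed equivalence.

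I expect the main obstacle to lie not in the algebra but in the two limit interchanges at the boundaries: verifying that the consistent family $(\bbQ_n)$ really reconstructs the law of $Y$ (and not some other solution), and that $[t<\tau_n]\uparrow[t<\zeta]$ holds $Q^x$-a.s.\ so that the mass $1-E^{x}[X_t]/x$ lost on the right is exactly the $Q^x$-probability that $Y$ has already exploded to $\infty$ by time $t$. The scale analysis of the first two parts is precisely what legitimises both matchings.
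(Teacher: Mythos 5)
Your proof is correct, and for the key ``moreover'' part it takes a genuinely different route from the paper. The scale-function analysis of the first two claims is identical to the paper's: both of you compute $s_Y(x)=1-\frac{1}{x}$ and invoke the boundary classification behind (\ref{e:psiif}) to get $Q^x(\lim_{t\rar\infty}Y_t=\infty)=Q^x(Y_t>0,\ \forall t>0)=1$ and the identification of $\zeta$ with the explosion time. For the martingale equivalence, however, the paper stays analytic: it applies Feller's test for explosions (Theorem 5.5.29 and Problem 5.5.27 in \cite{KS}) to $Y$, reduces the resulting criterion to $\int_1^{\infty}\frac{z}{\sigma^2(z)}dz=\infty$, and then cites the known equivalence of this integral test with the martingale property of $X$ (Kotani \cite{Kotani}, Delbaen--Shirakawa \cite{DelShir}). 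You instead prove the equivalence probabilistically, realising $Y$ as the Doob $h$-transform of $X$ with $h(x)=x$ and deriving the mass identity $Q^x(\zeta>t)=E^x[X_t]/x$ via localisation at $\tau_n$, Girsanov, and uniqueness in law; the dichotomy is then read off from the loss of mass of the nonnegative supermartingale $X$. This is precisely the F\"ollmer-measure viewpoint that the paper acknowledges in Remark \ref{r:Yashtr} (citing \cite{F72} and \cite{KKN}) but deliberately does not use for this proposition, and your localisation/Girsanov computation is essentially the one the paper itself deploys later, in the Appendix proof of Theorem \ref{t:bsunique}, to establish $E^x[g(X_t)\chf_{[t<\tau_n]}]=xQ^x\big[\chf_{[t<\tau_n]}\frac{g(X_t)}{X_t}\big]$. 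What the paper's route buys is brevity: given the cited tests, no measure construction or limit interchange is required. What your route buys is self-containedness (no appeal to Kotani's theorem) and a strictly stronger, quantitative conclusion — the identity $Q^x(\zeta>t)=E^x[X_t]/x$ for every $t$, which quantifies the explosion probability rather than only characterising when it vanishes. The two technical points you flag are real but are resolved exactly as you indicate: agreement of $Q^x$ with $\bbQ_n$ on $\cF_{\tau_n}$ follows from uniqueness in law of the stopped SDE under the Engelbert--Schmidt conditions, and the monotone identification $[t<\tau_n]\uparrow[t<\zeta]$ under $Q^x$ (respectively $[t<\tau_n]\uparrow[t<T_0]$ under $P^x$, with $X_t=0$ on $[T_0\le t]$ by absorption) is legitimised by the inaccessibility of $0$ for $Y$ and of $\infty$ for $X$, which your first part establishes.
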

\begin{proof}
Note that the scale function of $Y$ after our normalisation is given by $s(x) =1-\frac{1}{x}$. Thus, (\ref{e:psiif}) applies and we deduce $Q^x(\lim_{t \rar \infty}Y_t=\infty)=Q^x(Y_t>0, \; \forall t>0)=1$. Since $\zeta$ is the lifetime of the diffusion, this also implies that $\zeta=\inf\{t: Y_t =\infty\},\, Q^x$-a.s.. 

Next, it follows from Theorem 5.5.29 and Problem 5.5.27 in \cite{KS} that $Q^x(\zeta=\infty)=1$ if and only if
\[
\lim_{x \rar \infty}\int_1^{x}\frac{x-z}{x}\frac{z}{\sigma^2(z)}dz =\infty.
\]
However, 
\[
\int_1^{x}\frac{x-z}{x}\frac{z}{\sigma^2(z)}dz=\frac{1}{x}\int_c^x\int_c^y \frac{z}{\sigma^2(z)}dzdy.
\]
Thus, the above limit is valid  if and only if 
\[
\int_1^{\infty}\frac{z}{\sigma^2(z)}dz =\infty,
\]
which is well-known to be equivalent to the martingale property of $X$ (see, e.g., Theorem 1.4 in \cite{DelShir} under a mild assumption on $\sigma$ or Theorem 1 in \cite{Kotani} for a general result).
\end{proof}
\begin{remark} \label{r:Yashtr} Using the methods employed in the proof of Theorem \ref{t:rectr} one can show that the law of (\ref{e:limitRT}) is equal to that of the $h$-transform of $X$, where $h(x)=x$. The relationship between the martingale property of $X$ and the finiteness of the explosion time of its $h$-transform, i.e. Proposition \ref{p:Xvshtr}, has already been observed in the literature (see, e.g., \cite{F72} or, more recently, \cite{KKN}). 
\end{remark}
As observed earlier $1-1/x$ is a scale function of $Y$. Consequently, $1/Y$ is a nonnegative local martingale. It turns out that the martingale property of $1/Y$ is determined by whether $X$ hits $0$ or not.
\begin{proposition}\label{p:Yinvmart}
Suppose that $\sigma$ satisfies (\ref{e:ESR}) and let  $Y$ be a weak solution of  (\ref{e:limitRT}), whose law is denoted by $Q^x$. Then, $\frac{1}{Y}$ is a $Q^x$ martingale if and only if $\bbP(X_t>0,\; \forall t>0)=1$, where $X$ is given by (\ref{e:XLM}). 
\end{proposition}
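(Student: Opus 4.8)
The plan is to show that $1/Y$, which we already know to be a non-negative $Q^x$-local martingale with $1/Y_0=1/x$ (its scale being $1-1/x$), is a true martingale precisely when its expectation stays constant, and then to evaluate that expectation through the $h$-transform description of $Y$. Since a non-negative local martingale is automatically a supermartingale, $1/Y$ is a martingale on $[0,\infty)$ if and only if $E^{Q,x}[1/Y_t]=1/x$ for every $t>0$. Thus the whole problem reduces to computing $E^{Q,x}[1/Y_t]$, where $E^{Q,x}$ denotes expectation under $Q^x$.

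To do this I would exploit Remark \ref{r:Yashtr}, which identifies the law $Q^x$ of $Y$ with that of the $h$-transform of $X$ for the harmonic function $h(x)=x$. The defining identity of this $h$-transform, the analogue of the relation used in Subsection \ref{s:rvsh}, reads
\[
E^{Q,x}\!\left[g(Y_t)\chf_{[t<\zeta]}\right]=\frac1x\,E^x\!\left[g(X_t)X_t\right]
\]
for non-negative Borel $g$, where by Proposition \ref{p:Xvshtr} the lifetime is $\zeta=\inf\{t:Y_t=\infty\}$. Since $1/Y_t=0$ on $\{t\geq\zeta\}=\{Y_t=\infty\}$, applying this with $g(y)=1/y$ (rigorously, by monotone approximation $g_n(y)=(1/y)\wedge n$ and monotone convergence, using $g_n(X_t)X_t=\min(1,nX_t)\uparrow\chf_{[X_t>0]}$) gives
\[
E^{Q,x}[1/Y_t]=\frac1x\,E^x[\chf_{[X_t>0]}]=\frac{P^x(X_t>0)}{x}.
\]
The integrability needed here is free, since $E^{Q,x}[1/Y_t]\leq 1/x<\infty$ by the supermartingale property.

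It then remains to translate the condition $P^x(X_t>0)=1$ for all $t>0$ into the non-absorption of $X$ at $0$. Because $0$ is an absorbing boundary, writing $T_0=\inf\{t:X_t=0\}$ we have $\{X_t=0\}=\{T_0\leq t\}$ up to null sets, so $E^{Q,x}[1/Y_t]=P^x(T_0>t)/x$. Hence $1/Y$ is a $Q^x$-martingale if and only if $P^x(T_0>t)=1$ for all $t$, i.e. $P^x(T_0=\infty)=1$, which is exactly $\bbP(X_t>0,\ \forall t>0)=1$. I expect the only delicate step to be a clean justification of the $h$-transform identity in the strict-local-martingale regime, in particular that the mass defect $1-E^x[X_t]/x$ is carried entirely by the explosion event $\{t\geq\zeta\}$, on which $1/Y_t$ vanishes, so that the boundary behaviour of $Y$ at $\infty$ contributes nothing to $E^{Q,x}[1/Y_t]$; everything else is a routine supermartingale argument.
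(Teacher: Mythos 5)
Your proof is correct, but it proceeds by a genuinely different route than the paper. The paper's own argument is analytic: it writes $\xi=1/Y$, computes via It\^{o}'s formula that $d\xi_t=\sigma(1/\xi_t)\xi_t^2\,dB_t$, invokes Kotani's integral criterion \cite{Kotani} to characterise the martingale property of $\xi$ as $\int_1^\infty \frac{dz}{\sigma^2(1/z)z^3}=\infty$, and after the change of variables $x=1/z$ identifies this with $\int_0^1\frac{x}{\sigma^2(x)}dx=\infty$, which is equivalent to $X$ never hitting $0$ by Feller's test (Theorem 5.5.29 in \cite{KS}). You instead prove the quantitative duality $E^{Q,x}[1/Y_t]=P^x(X_t>0)/x$ by localising the Girsanov identity at $\tau_n=\inf\{t:X_t\notin(1/n,n)\}$ and passing to the limit with the truncations $g_n(y)=(1/y)\wedge n$ (so that $g_n(X_t)X_t=\min(1,nX_t)\uparrow\chf_{[X_t>0]}$, neatly sidestepping the $\infty\cdot 0$ issue at the absorbing boundary), and then conclude via the standard fact that a nonnegative supermartingale is a martingale iff its expectation is constant. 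Both proofs rely on the assertion, made in the paper just before the proposition, that $1/Y$ extended by $0$ after explosion is a nonnegative local martingale, and yours additionally needs uniqueness in law for (\ref{e:limitRT}) under (\ref{e:ESR}) to identify the Girsanov-transformed law with $Q^x$ — which the paper states and which is exactly the argument it later deploys in proving Theorem \ref{t:bsunique}; note there is no circularity, since your proof uses only Remark \ref{r:Yashtr}, Proposition \ref{p:Xvshtr} and that localisation, not Theorem \ref{t:bsunique} itself. What each approach buys: the paper's is shorter given the two cited analytic results and stays entirely at the level of integral tests on $\sigma$; yours is purely probabilistic, avoids both Kotani's theorem and Feller's test, and proves strictly more than the stated equivalence, namely the exact identification of the martingale defect of $1/Y$ under $Q^x$ with the absorption probability of $X$ under $P^x$, $\frac{1}{x}-E^{Q,x}[1/Y_t]=\frac{1}{x}P^x(T_0\le t)$, a Pal--Protter-type duality that is of independent interest.
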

\begin{proof}
	Denote $\frac{1}{Y}$ by $\xi$. Then, $d\xi_t=\sigma(\frac{1}{\xi_t})\xi^2_tdB_t$	for some Brownian motion $B$. It follows from Theorem 1 in \cite{Kotani} that  $\xi$ is a martingale if and only if
	\[
	\int_1^{\infty}\frac{1}{\sigma^2(\frac{1}{z})z^3}dz=\infty.
	\]
	However, after a change of variable the above condition is equivalent to
	\[
	\int_0^1\frac{x}{\sigma^2(x)}dx=\infty,
	\]
	which is equivalent to the strict positivity of $X$ by Theorem 5.5.29 in \cite{KS}.
	\end{proof}
\section{Yet another transform for recurrent diffusions} \label{s:another}
We have noted in Section \ref{s:rectr} a remarkable transform that turned any regular diffusion into a positively recurrent one. This section will present a particular type of transformation for recurrent diffusions that will render them transient. This transformation will be especially useful when we consider the optimal stopping problems in Section \ref{s:OS}.

When $X$ is a transient diffusion with $s(l)=0$, it converges to $l$ with positive probability. If one wants to condition this process to converge to $r$ with probability $1$, it suffices to use the $h$-transform with $h=s$ (see, e.g. Section 6 in \cite{EH}). If $X$ is recurrent, on the other hand, the range of $s$ is the whole real line so one needs to consider taking absolute values to obtain a positive local martingale using $s$.  The next proposition introduces a particular conditioning for recurrent diffusions that conditions $X_{\infty}$ to exist and take values in the set $\{l,r\}$.  Similar to the martingale characterisation of a  positive diffusion in natural scale in terms of the explosion time of its $h$-transform that we have seen in Proposition \ref{p:Xvshtr}, the resulting diffusion will turn out to have a finite explosion time if and only if $s(X)$ is a strict local martingale. 
\begin{proposition} \label{p:ttforr} Suppose $X$ is a recurrent diffusion satisfying Assumption \ref{a:reg}. Let $c>0$ be fixed and $y^*$ be the unique point in $(l,r)$ such that $s(y^*)=0$. Then, the following statements are valid:
\begin{enumerate}
\item $N$ is a local martingale, where
\[
N_t:= \left(1+c|s(X_t)|\right)\exp\left(-cs'(y^*)L^{y^*}_t\right).
\]
\item For any $x \in (l,r)$ there exists a unique weak solution to 
\be \label{e:ttforr}
X_t= x+ \int_0^t \sigma(X_s)dB_s +\int_0^t \left\{b(X_s) - c\frac{s'(X_s)}{1-cs(X_s)}\chf_{[X_s\leq y^*]}+c\frac{s'(X_s)}{1+cs(X_s)}\chf_{[X_s> y^*]}\right\}ds, \quad t <\zeta,
\ee
where $\zeta:=\inf\{t: X_{t-}\in \{l,r\}\}$.
\item The regular diffusion defined by (\ref{e:ttforr}) has scale function
\be \label{e:ttscale}
\tilde{s}(x):=\frac{1+c(s(x)+|s(x)|)}{2(1+c|s(x)|)},
\ee
and speed measure
\[
\tilde{m}(dx)=\frac{4(1+c|s(x)|)^2}{c\sigma^2(x)s'(x)}dx=\frac{2(1+c|s(x)|)^2}{c}m(dx).
\]
 $\tilde{P}^x(X_{\zeta}=r)=\tilde{s}(x)=1-\tilde{P}^x(X_{\zeta}=l)$, where $\tilde{P}^x$ denotes the law of (\ref{e:ttforr}). Moreover, $\tilde{P}^x(\zeta=\infty)=1$ if and only if $s(X)$ is a $P^x$-martingale. 
 \item For any $F\in \cF_t$ the following absolute continuity relationship holds.
 \be \label{e:ttforrAC}
 \tilde{P}^x(F, \zeta>t)=\frac{E^x\left[\chf_F N_t\right]}{1+c|s(x)|}.
 \ee
 Consequently, $N$ is a martingale if and only if $s(X)$ is.
\end{enumerate}
\end{proposition}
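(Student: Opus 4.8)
The plan is to recognise the whole proposition as the ``recurrent transform'' built from $h(x):=1+c|s(x)|$, except that here $s_h$ is \emph{finite} at both ends, so the resulting diffusion is transient rather than recurrent and Theorem \ref{t:rectr} cannot be quoted verbatim; I would instead redo its construction by hand. For part (1), set $Z_t:=s(X_t)$, which is a continuous $P^x$-local martingale with $dZ_t=s'(X_t)\sigma(X_t)\,dB_t$ because $s$ is the scale function, and apply Tanaka's formula to $|Z|$:
\[
h(X_t)=h(X_0)+c\int_0^t\sgn(Z_u)\,dZ_u+cL^0_t(Z),
\]
where $L^0(Z)$ is the local time of $Z$ at $0$. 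The occupation-times formula identifies $L^0_t(Z)=s'(y^*)L^{y^*}_t$, so that $K_t:=cs'(y^*)L^{y^*}_t=cL^0_t(Z)$ and $N_t=h(X_t)e^{-K_t}$. Integrating by parts against the finite-variation factor $e^{-K_t}$ gives
\[
dN_t=ce^{-K_t}\sgn(Z_t)\,dZ_t+e^{-K_t}\bigl(1-h(X_t)\bigr)\,dK_t .
\]
The crucial observation is that $dK_t$ is carried by $\{X_t=y^*\}$, where $h(y^*)=1+c|s(y^*)|=1$; hence the drift term vanishes identically and $N$ is a local martingale.

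For part (2) I would construct $\tilde P^x$ exactly as in Proposition \ref{p:labcQ}: localise along $\tau_n:=\inf\{t:L^{y^*}_t\geq n\text{ or }X_t\notin(l_n,r_n)\}$ for a compact exhaustion $(l_n,r_n)\uparrow(l,r)$, note that $N_{\cdot\wedge\tau_n}$ is bounded and hence a genuine $P^x$-martingale, define the consistent family $d\tilde P^x_n=(N_{\tau_n}/N_0)\,dP^x$ on $\cF_{\tau_n}$, and extend by Theorem 1.3.5 in \cite{SV}. Girsanov's theorem on each $[0,\tau_n]$ then shows that $X$ solves \eqref{e:ttforr} under $\tilde P^x$; since $\sigma$ and the modified drift still satisfy the Engelbert--Schmidt condition \eqref{e:ESR} on every compact subinterval, the Engelbert--Schmidt theorem provides a unique-in-law weak solution up to $\zeta$, which the construction realises.

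For part (3) I would read the scale and speed of \eqref{e:ttforr} off \eqref{e:scalespeed}: the added drift turns $s'$ into $\tilde s'\propto s'/h^2=s'/(1+c|s|)^2$, and integrating while matching the normalisation $\tilde s(l)=0$, $\tilde s(r)=1$, $\tilde s(y^*)=\half$ yields exactly \eqref{e:ttscale}, whence $\tilde m=2/(\tilde s'\sigma^2)\,dx$. As the transformed diffusion is in scale on $[0,1]$ with both boundaries of finite scale, the standard two-point formula gives $\tilde P^x(X_\zeta=r)=\tilde s(x)$. For the explosion dichotomy I would pass to natural scale $W:=s(X)$, which solves $dW_t=\eta(W_t)\,dB_t$ with $\eta(w)=s'(s^{-1}(w))\sigma(s^{-1}(w))$; in these coordinates the transform is precisely the absolute-value analogue of the $h(x)=x$ transform of Section \ref{s:limit}. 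Feller's test (Theorem 5.5.29 and Problem 5.5.27 in \cite{KS}), applied as in the proof of Proposition \ref{p:Xvshtr}, then shows that the transformed diffusion reaches $\{l,r\}$ in finite time with positive probability iff the relevant integral $\int^{\pm\infty}|w|/\eta^2(w)\,dw$ converges, which is exactly the failure of the martingale property of $W=s(X)$. This reduction together with the Feller computation is the main obstacle.

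Finally, for part (4) the absolute-continuity identity \eqref{e:ttforrAC} is the analogue of \eqref{e:ACrectr} and falls straight out of the construction, since on $\{\zeta>t\}$ the measure $\tilde P^x$ has density $N_t/N_0$ relative to $P^x$, with $N_0=1+c|s(x)|$ (recall $X$ is recurrent, so $\zeta=\infty$ $P^x$-a.s.\ and $N_t$ is defined for all $t$). Taking $F=\Om$ gives $\tilde P^x(\zeta>t)=E^x[N_t]/N_0$; since $N$ is a nonnegative local martingale it is a supermartingale, so $N$ is a true martingale iff $E^x[N_t]=N_0$ for all $t$, i.e.\ iff $\tilde P^x(\zeta>t)=1$ for all $t$, i.e.\ iff $\tilde P^x(\zeta=\infty)=1$. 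Combining this equivalence with the dichotomy of part (3) yields the closing assertion that $N$ is a $P^x$-martingale iff $s(X)$ is.
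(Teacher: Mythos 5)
Your parts (1) and (3) are correct and essentially coincide with the paper's argument: (1) is the It\^o--Tanaka computation, made explicit in natural scale with the correct identification $L^0_t(s(X))=s'(y^*)L^{y^*}_t$, and (3) is the same Feller-test/Kotani computation the paper performs, merely written in the coordinates $W=s(X)$. Likewise, the existence/uniqueness claim in (2) via Engelbert--Schmidt, and the closing supermartingale argument in (4) (take $F=\Om$, get $\tilde P^x(\zeta>t)=E^x[N_t]/(1+c|s(x)|)$, and note that a nonnegative supermartingale is a martingale iff its expectation is constant), are exactly the paper's. The genuine gap is in your \emph{construction} of $\tilde P^x$, on which your proof of the identity (\ref{e:ttforrAC}) rests. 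You propose to extend the consistent family $d\tilde P^x_n=(N_{\tau_n}/N_0)\,dP^x$ ``exactly as in Proposition \ref{p:labcQ}'' via Theorem 1.3.5 of \cite{SV}. That theorem requires $\lim_{n}\tilde P^x_n(\tau_n\leq t)=0$ for every $t$. In Proposition \ref{p:labcQ} this holds because the limiting process is recurrent and its local time is finite; here, however, your $\tau_n$ contains the exit times from a compact exhaustion, and under the limiting law $\lim_n \tilde P^x_n(\tau_n\leq t)=\tilde P^x(\zeta\leq t)$, which is \emph{strictly positive} whenever $s(X)$ is a strict local martingale --- precisely the interesting case, by your own part (3). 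So the hypothesis of the extension theorem fails (and must fail: no countably additive extension can live on the space of non-exploding paths), and your construction is internally inconsistent with the explosion dichotomy you prove afterwards.

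The repair is what the paper does, and you already have every ingredient. Take $\tilde P^x$ to be the law supplied by the Engelbert--Schmidt theorem in your part (2); do not attempt to build it by extension. Then, for $l<a<b<r$ and $T_{a,b}:=\inf\{t:X_t\notin(a,b)\}$, the stopped process $N^{T_{a,b}}$ is a bounded positive martingale, and Girsanov together with (localized) uniqueness in law identifies $\tilde P^x$ on $\cF_{T_{a,b}}$, so that for $F\in\cF_t$ one has $\tilde P^x(F,\,t<T_{a,b})=E^x\big[\chf_F\chf_{[t<T_{a,b}]}N_t\big]/(1+c|s(x)|)$. Now let $a\downarrow l$ and $b\uparrow r$: on the left-hand side $T_{a,b}\uparrow\zeta$ under $\tilde P^x$ and monotone convergence applies; on the right-hand side $T_{a,b}\uparrow\zeta=\infty$ $P^x$-a.s.\ by recurrence of $X$ under $P^x$, and dominated convergence applies with dominating function $N_t$, which is integrable since $N$ is a nonnegative supermartingale. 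This yields (\ref{e:ttforrAC}) directly, after which the rest of your part (4) goes through verbatim.
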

\begin{proof}
Note that $1+c|s(x)|$ is absolutely continuous with a jump in its left derivative at $x=y^*$ with size $2cs'(y^*)$. Thus, $N$ is a local martingale due It\^{o}-Tanaka formula as in Proposition \ref{p:step1}. Moreover, the arguments used in the proof of Theorem \ref{t:rectr} also yields the existence of a weak solution to (\ref{e:ttforr}), which is unique in law. 

By direct manipulation one can also verify that $\tilde{s}$ and $\tilde{m}$ are a scale function and a speed measure for the solutions of (\ref{e:ttforr}). That $\tilde{P}^x(X_{\zeta}=r)=\tilde{s}(x)$ follows directly from the statement preceding (\ref{e:psiff}).

According to Theorem 5.5.29  in \cite{KS}  $\tilde{P}^x(\zeta=\infty)=1$ if and only if
\[
\lim_{x\rar r} \int_{y^*}^{x}\tilde{s}(x)-\tilde{s}(z)\tilde{m}(dz)=\lim_{x \rar l}\int_x^{y^*}\tilde{s}(z)-\tilde{s}(x)\tilde{m}(dz)=\infty.
\]
However, the above hold if and only if
\[
\int_{y^*}^{r}\frac{s(z)}{\sigma^2(z)s'(z)}dz=-\int_{l}^{y^*}\frac{s(z)}{\sigma^2(z)s'(z)}dz=\infty,
\]
which is equivalent to the martingale property of $s(X)$ by Theorem 1 of \cite{Kotani}.

In order to prove the remaining assertions let $l<a<b<r$ and $T_{a,b}:=\inf\{t:X_t \notin (a,b)\}$. Then, $N^{T_{a,b}}$ is a bounded positive martingale. Therefore,
\[
\tilde{P}^x(t <T_{a,b}, F)=\frac{E^x[\chf_{[t<T_{a,b}]}N_t]}{1+c|s(x)|}.
\]
Note that $T^{a,b}\rar \zeta$ under $P^x$ and $\tilde{P}^x$. However, $\zeta=\infty, \, P^x$-a.s. and (\ref{e:ttforrAC}) follows from the dominated convergence theorem.

(\ref{e:ttforrAC}) in particular implies
\[
\tilde{P}^x(\zeta>t)=\frac{E^x\left[ N_t\right]}{1+c|s(x)|}.
\]
Thus, $\tilde{P}^x(\zeta=\infty)=1$ iff 
\[
\lim_{t \rar \infty}\frac{E^x\left[ N_t\right]}{1+c|s(x)|}=1.
\]
However, since $N$ is a supermartingale, the above limit holds if and only if $N$ is a martingale. Hence, we conclude by the previous part, which has established the equivalence of the martingale property of $s(X)$ and $\tilde{P}^x(\zeta=\infty)=1$.
\end{proof}
\begin{example} Suppose that $X$ is a Brownian motion so that $y^*=0$. Then, taking $c=1$ in Proposition \ref{p:ttforr} implies that the transformed process is a weak solution of 
\[
X_t=x +B_t +\int_0^t \frac{\sgn(X_s)}{1+|X_s|}ds, \qquad t>0,
\]
where $\sgn(x)=-\chf_{[x<0]}+\chf_{[x\geq 0]}$. Roughly speaking $1+|X|$ behaves like  a $3$-dimensional Bessel process when $X$ is away from $0$. Observe that the above SDE has a non-exploding solution since Brownian motion is a martingale. Moreover, $X_{\infty}$ exists and equals $\infty$ or $-\infty$ with probabilities $\tilde{s}(x)$ and $1-\tilde{s}(x)$, respectively.
\end{example}
\section{Non-uniqueness of the Black-Scholes equation} \label{s:BS}
As promised earlier we will now apply the results of Sections \ref{s:rectr} and \ref{s:limit} to financial models, where the stock price movements are governed by a regular one-dimensional diffusion. To simplify the exposition we shall assume that the interest rate is $0$. Our interest is in the pricing equation for a derivative contract written on this stock. The Fundamental Theorem of Asset Pricing (see \cite{DS}) stipulates that the stock price must follow a local martingale under an equivalent probability measure, i.e. risk-neutral measure, and the price of the derivative contract equals the expectation of its terminal payoff under this measure if it is replicable.

Throughout this section we will assume that the stock price under the unique risk-neutral measure is given by
\be \label{e:bs}
X_t=X_0+\int_0^{t}\sigma(X_s)dB_s, \quad X_0>0,
\ee
on $(\Om, \cF,(\cF_t)_{t \geq 0}, \bbP)$, where $X_0$ is deterministic and $\sigma$ satisfies (\ref{e:ESR}) on $(0,\infty)$ as well as Assumption \ref{a:sholder}. In particular $X$ is the unique strong solution of the above equation. We also impose the condition that $X$ is a strict local martingale, i.e.
\be \label{e:LMc}
\int_1^{\infty}\frac{z}{\sigma^2(z)}dz<\infty.
\ee
\begin{remark} Note that we do not assume $X$ is always strictly positive, i.e. $X$ can hit $0$ in finite time with positive probability. 
\end{remark} 
The strict local martingale assumption places a {\em bubble} on the stock price in the sense that it   is valued  higher in the market than its expected future cash flows. Appearance of bubbles causes many standard results in derivative pricing theory become invalid (see \cite{CH} and \cite{PP}). In particular, the Cauchy problem associated to the prices of  European options do not admit a unique solution.
\begin{definition}
	Let $a>0$ and $b$ be measurable functions on $(0,\infty)$ and $D$ be an interval  in $[0,\infty)$. Consider a continuous function $g:D\mapsto \bbR$. A continuous function $u:[0,\infty)\times D\to \bbR$ is said to be a classical solution on $[0,\infty)\times D$ of 
	\bea
	u_t(t,x)&=&\frac{1}{2}a(x) u_{xx}(t,x)+ b(x)u_x(t,x)\label{e:defpde} \\
	u(0,x)&=&g(x), \label{e:defpdebc}
	\eea
	if $u \in C^{1,2}((0,\infty)\times \interior(D))$,  (\ref{e:defpde}) is satisfied for all $(t,x) \in (0,\infty)\times \interior(D)$ while (\ref{e:defpdebc}) is valid for all $x \in D$.
\end{definition} 
Given the above definition the following is an easy consequence of Theorem 3.2 in Ekstr\"om and Tysk \cite{ET}. For the rest of this section $D^*$ will denote $[0,\infty)$ if $P^x(\inf\{t: X_t=0\}<\infty)>0$ for some $x>0$. On the other hand, if $0$ is not accessible in finite time, $D^*:=(0,\infty)$.
\begin{theorem} \label{t:ET} Suppose that $\sigma$ satisfies (\ref{e:ESR}) on $(0,\infty)$, (\ref{e:LMc}) and Assumption \ref{a:sholder}. Consider a continuous  function $g:[0,\infty) \to [0,\infty)$ of at most linear growth and define on $[0,\infty)\times D^*$ the function $v(t,x):= E^x[g(X_t)]$, where $X$ is the unique solution of (\ref{e:bs}).  Assume further that $g(0)=0$ if $0\in D^*$. Then, $v$ is a classical solution on  $[0,\infty)\times D^*$ of the Cauchy problem
\be \label{e:cauchy}
v_t =\frac{1}{2}\sigma^2v_{xx}, \qquad v(0,\cdot)=g.
\ee
\end{theorem}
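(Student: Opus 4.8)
The plan is to invoke Theorem 3.2 of Ekstr\"om and Tysk \cite{ET} as a black box and simply verify that the hypotheses of \cite{ET} are met by the present setup. The statement already advertises itself as ``an easy consequence'' of that theorem, so the mathematical content lies entirely in checking compatibility of assumptions and in reconciling the boundary conventions. First I would recall the precise hypotheses that \cite{ET} require for their representation $v(t,x)=E^x[g(X_t)]$ to be a classical $C^{1,2}$ solution of \eqref{e:cauchy}: local regularity of the coefficient $\sigma$ (which here is supplied by the Engelbert--Schmidt condition \eqref{e:ESR} together with Assumption \ref{a:sholder}, the latter giving the local H\"older/continuity needed for interior parabolic regularity), the driftless form of the generator (guaranteed since $X$ is on natural scale, i.e. $b\equiv 0$ in \eqref{e:bs}), and the growth and continuity conditions on the payoff $g$, namely continuity and at most linear growth.

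The genuinely delicate point—and the step I expect to be the main obstacle—is the treatment of the boundary at $0$. The definition of $D^*$ bifurcates according to whether $0$ is accessible in finite time. When $0$ is accessible, $D^*=[0,\infty)$ and the boundary point $0$ is included, so one must verify that $v$ extends continuously up to $x=0$ and that the initial condition $v(0,\cdot)=g$ holds there; this is exactly why the hypothesis $g(0)=0$ is imposed. Since $0$ is an absorbing boundary (by the standing convention that the diffusion is killed/absorbed on reaching a boundary), we have $X_t=0$ for all $t\geq \inf\{s:X_s=0\}$ on the event that $0$ is hit, so $E^0[g(X_t)]=g(0)=0$, and the requirement $g(0)=0$ is precisely what makes $v$ vanish at the absorbing boundary consistently; I would check that this matches the boundary condition implicitly used in \cite{ET}. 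When $0$ is inaccessible, $D^*=(0,\infty)$ is open, the boundary issue disappears, and only interior regularity together with the initial condition on the open half-line is needed.

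The second thing to confirm carefully is that the strict local martingale assumption \eqref{e:LMc} does not obstruct the application: \cite{ET} work precisely in the regime where $X$ may be a strict local martingale, and their theorem gives that $v(t,x)=E^x[g(X_t)]$—rather than the naive candidate suggested by the martingale property—is the classical solution. Thus I would emphasise that no true-martingale hypothesis is needed, and that the finiteness of $v$ on $[0,\infty)\times D^*$ follows from the at-most-linear growth of $g$ together with the supermartingale property of $X$ (which bounds $E^x[X_t]\leq x$ and hence controls $E^x[g(X_t)]$). Assembling these verifications—local regularity of $\sigma$ from \eqref{e:ESR} and Assumption \ref{a:sholder}, the natural-scale form eliminating the first-order coefficient, linear growth and continuity of $g$, and the boundary reconciliation at $0$ via $g(0)=0$—lets one quote \cite[Theorem 3.2]{ET} directly to conclude that $v\in C^{1,2}((0,\infty)\times\interior(D^*))$ solves \eqref{e:cauchy} with $v(0,\cdot)=g$, which is exactly the asserted conclusion.
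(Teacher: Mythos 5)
Your proposal is correct and matches the paper's own treatment: the paper offers no separate argument for Theorem \ref{t:ET}, stating only that it is ``an easy consequence of Theorem 3.2 in Ekstr\"om and Tysk \cite{ET},'' which is precisely the black-box citation you carry out, with the hypothesis checks (regularity of $\sigma$, driftless generator, linear growth and continuity of $g$, and the absorbing-boundary reconciliation via $g(0)=0$) filled in explicitly. Your verification is a faithful, slightly more detailed version of what the paper leaves implicit.
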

Non-uniqueness of the Cauchy problem is implicit in the above theorem. Indeed, if we let $g(x)=x$ and $w(x)=x$, both $w$ and $v$ are solutions of (\ref{e:cauchy}). Yet, $E^x[X_t]\neq x$ since $X$ is a strict local martingale.

The equation (\ref{e:cauchy}) is called the {\em Black-Scholes pricing equation} in the literature. If $g$ is the time-$T$ payoff of a European derivative written on the stock, $v(T-t,X_t)$ gives the time-$t$ price of this derivative, where $v$ is the solution of (\ref{e:cauchy}). On the other hand the arbitrage pricing theory states that the price of the derivative at time $t$ equals  $E^{X_t}[g(X_{T-t})]$ for a sufficiently well-behaved payoff since the risk-neutral measure is unique. Although Theorem \ref{t:ET} shows that the function defined by this alternative pricing formula still satisfies the Black-Scholes equation,  non-uniqueness of the Cauchy problem is problematic especially when one has to rely on numerical methods to find the price of the derivative.

The goal of this section is to identify {\em the stochastic solution}, $E^x[g(X_t)]$ in terms of  the unique solution of {\em some}
 Cauchy problem. The discussion following Theorem \ref{t:ET} shows that there is no hope if we work with the differential operator associated to the generator of $X$. However, the solutions of (\ref{e:limitRT}), which can be interpreted as the limit of recurrent transforms of $X$, or in view of Remark \ref{r:Yashtr} as an $h$-transform of $X$, come to our rescue.
\begin{theorem} \label{t:bsunique}
Suppose that $\sigma$ satisfies (\ref{e:ESR}) on $(0,\infty)$, (\ref{e:LMc}) and Assumption \ref{a:sholder}. Consider a continuous  function $g:[0,\infty) \to [0,\infty)$ of at most linear growth at infinity and $g(0)=0$ whenever $0\in D^*$. Let $v(t,x):= E^x[g(X_t)]$, where $X$ is the unique solution of (\ref{e:bs}),  for $(t,x )\in [0,\infty)\times D^*$. Then, the following statements are valid:
\begin{enumerate}
\item If $0\in D^*$, $v(t,0)=0$ for all $t \geq 0$.  
\item For $x>0$, $v(t,x)= x w(t,x)$, where $w$ is the unique  classical nonnegative solution on $[0,\infty)\times (0,\infty)$ of 
\bea
w_t(t,x)&=&\frac{1}{2}\sigma^2(x) w_{xx}(t,x) + \frac{\sigma^2(x)}{x}w_x(t,x) \label{e:wpde}\\
w(0,x)&=&\frac{g(x)}{x} \label{w:ic}
\eea
among the class of functions satisfying the following conditions:
\begin{enumerate}
	\item $w$ is of $O(x^{-1})$ as $x \rar 0$:
	\be \label{e:contat0+}
	\lim_{x \rar 0}\sup_{s \leq t}xw(s,x)<\infty, \qquad \forall t>0.
	\ee
	Moreover,  if $X$ reaches $0$ in finite time
	\be \label{e:contat0}
	\lim_{x \rar 0}\sup_{s \leq t}xw(s,x)=0, \qquad \forall t>0.
	\ee
	\item $w$ approaches to $0$ near infinity:
	\be
\forall t>0,\; \lim_{n \rar \infty}w(t_n,x_n)=0 \mbox{ if } x_n \uparrow \infty \mbox{ and }  t_n \rar t. \label{w:bc}
	\ee
\end{enumerate}
\item If $Y$ is  a weak solution of  (\ref{e:limitRT}) and $Q^x$ is its law, 
\be \label{e:w}
w(t,x)=Q^x\left[\frac{g(Y_t)}{Y_t}\chf_{[\zeta >t]}\right],
\ee
where $\zeta$ corresponds to the lifetime of $Y$.
\end{enumerate}
\end{theorem}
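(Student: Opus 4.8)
The plan is to handle the three claims in the order (3), (1), then (2), since the probabilistic representation is the engine that drives everything else. The key input is the $h$-transform identity. By Remark \ref{r:Yashtr} the law $Q^x$ of a weak solution $Y$ of \eqref{e:limitRT} is the law of the Doob $h$-transform of $X$ with $h(x)=x$; as $X=h(X)$ is a nonnegative $P^x$-local martingale (with $h$ harmonic rather than a potential), the $h$-transform formula, exactly as derived in Section \ref{s:rvsh}, reads
\[
Q^x\!\left[f(Y_t)\,\chf_{[\zeta>t]}\right]=\frac{1}{x}\,E^x\!\left[f(X_t)\,X_t\right]
\]
for nonnegative Borel $f$, the right-hand side charging only $\{X_t>0\}$ because $h(0)=0$ annihilates the trajectories absorbed at $0$. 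Taking $f(y)=g(y)/y$ and invoking $g(0)=0$ cancels the extra factor $X_t$ and yields $Q^x[\frac{g(Y_t)}{Y_t}\chf_{[\zeta>t]}]=\frac{1}{x}E^x[g(X_t)]=\frac{v(t,x)}{x}$, which is \eqref{e:w} once we set $w(t,x):=v(t,x)/x$; this proves (3) and the factorisation $v=xw$. Claim (1) is then immediate: if $0\in D^*$ the point $0$ is an absorbing boundary, so $X\equiv0$ under $P^0$ and $v(t,0)=g(0)=0$.

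I would next show that $w=v/x$ is a classical solution satisfying the side conditions. Theorem \ref{t:ET} gives $v\in C^{1,2}$ solving $v_t=\frac12\sigma^2v_{xx}$; substituting $v=xw$, using $v_x=w+xw_x$ and $v_{xx}=2w_x+xw_{xx}$, and dividing by $x$ converts \eqref{e:cauchy} into \eqref{e:wpde} with initial datum $w(0,x)=g(x)/x$. For \eqref{e:contat0+} I would use the linear bound $g(y)\le C(1+y)$ and the supermartingale inequality $E^x[X_s]\le x$, so that $xw(s,x)=E^x[g(X_s)]\le C(1+x)$ stays bounded as $x\to0$; joint continuity of $v$ with $v(\cdot,0)\equiv0$ then upgrades this to the uniform vanishing \eqref{e:contat0} when $0$ is accessible.

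The condition \eqref{w:bc} is more delicate and is where I would lean on the representation. From $g(y)/y\le C(1+1/y)$,
\[
w(t,x)=Q^x\!\left[\frac{g(Y_t)}{Y_t}\chf_{[\zeta>t]}\right]\le C\,Q^x\!\left[\frac{1}{Y_t}\chf_{[\zeta>t]}\right]+C\,Q^x[\zeta>t].
\]
Since the scale function of $Y$ is $1-1/x$, the process $1/Y$ is a nonnegative $Q^x$-local martingale started at $1/x$ (cf. Proposition \ref{p:Yinvmart}), so the first term is at most $C/x\to0$. For the second I would note that $\zeta$ is the hitting time of $0$ by $\xi:=1/Y$, a local martingale in natural scale for which \eqref{e:LMc} renders $0$ accessible; writing $\tau_0,\tau_K$ for the hitting times of $0,K$ by $\xi$ and $\tau=\tau_0\wedge\tau_K$, the inclusion $\{\tau_0>t\}\subseteq\{\tau_K<\tau_0\}\cup\{\tau>t\}$ gives $Q^x[\zeta>t]\le \frac{1/x}{K}+\frac{1}{t}Q^x[\tau]$, and letting $x\to\infty$ (so $\xi_0=1/x\to0$) kills both terms because the mean exit time $Q^x[\tau]$ of $\xi$ from $(0,K)$ vanishes at the boundary point $0$. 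This proves \eqref{w:bc} and, as a by-product with $g(y)=y$, the corollary $E^x[X_t]/x=Q^x[\zeta>t]\to0$.

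The main obstacle is the uniqueness statement in (2). Given any nonnegative classical solution $\tilde w$ of \eqref{e:wpde}--\eqref{w:ic} obeying \eqref{e:contat0+}, \eqref{e:contat0} and \eqref{w:bc}, I would apply It\^{o}'s formula to $s\mapsto \tilde w(t-s,Y_s)$ under $Q^x$; because $Y$ has generator $\frac12\sigma^2\partial_{xx}+\frac{\sigma^2}{x}\partial_x$, the equation \eqref{e:wpde} forces the finite-variation part to vanish and this process is a local martingale on $[0,t\wedge\zeta)$. Localising by the exit times $\tau_{a,b}$ of $Y$ from $(a,b)\subset(0,\infty)$ and by $t-\delta$ yields
\[
\tilde w(t,x)=Q^x\!\left[\tilde w\big(t-s\wedge\tau_{a,b}\wedge(t-\delta),\,Y_{s\wedge\tau_{a,b}\wedge(t-\delta)}\big)\right],
\]
and the real work is to pass to the limit $a\downarrow0$, $b\uparrow\infty$, $\delta\downarrow0$, $s\uparrow t$ and identify the right-hand side with $Q^x[\frac{g(Y_t)}{Y_t}\chf_{[\zeta>t]}]$. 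On $\{\zeta>t\}$ the integrand converges to $\tilde w(0,Y_t)=g(Y_t)/Y_t$; the contribution of paths leaving through $b$, i.e. those approaching the finite-time explosion of $Y$, must be shown to be negligible, which is precisely the role of \eqref{w:bc}, while \eqref{e:contat0+}/\eqref{e:contat0} tame the singular drift $\sigma^2/x$ near $0$. The crux is therefore a uniform-integrability and boundary-term analysis that legitimises these interchanges despite $Y$ exploding in finite time; the three side conditions are exactly the hypotheses that make the boundary terms disappear and single out $w$.
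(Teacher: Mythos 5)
Your proposal is sound for parts (1), (3), and the direct half of part (2): the factorisation $v=xw$ via the $h$-transform identity (which the paper proves concretely by localising at $\tau_n=\inf\{t:X_t\notin(\tfrac1n,n)\}$, applying Girsanov, and letting $n\rar\infty$ with monotone convergence and $g(0)=0$), the verification that $w$ solves (\ref{e:wpde})--(\ref{w:ic}) via Theorem \ref{t:ET}, and the growth conditions near $0$. Your route to (\ref{w:bc}) is genuinely different from the paper's: the paper runs a strong Markov/monotonicity argument on $w_0(t,x)=Q^x(\zeta>t)$, whereas you estimate the mean exit time of $\xi=1/Y$ from $(0,K)$. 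This does work, but the decisive computation is omitted and should be recorded: $E[\tau]=\int_0^K G_{(0,K)}(\xi_0,y)\,m_\xi(dy)$ with $m_\xi(dy)=\frac{2}{\sigma^2(1/y)y^4}dy$, and the domination needed to send $\xi_0\rar 0$ holds precisely because $\int_0^K y\,m_\xi(dy)=2\int_{1/K}^{\infty}\frac{z}{\sigma^2(z)}dz<\infty$, i.e.\ because of (\ref{e:LMc}).

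The genuine gap is the uniqueness assertion, which is the heart of part (2) and which you explicitly leave as a plan (``the real work is to pass to the limit\dots'', ``the crux is therefore a uniform-integrability and boundary-term analysis''). That analysis is not routine, and the step your sketch cannot deliver is the lower boundary term in the case where $X$ is strictly positive: then (\ref{e:contat0}) is \emph{not} among the hypotheses, and (\ref{e:contat0+}) alone is insufficient. Writing $\tau_{n,m}$ for the exit time of $Y$ from $(\tfrac1m,n)$, the contribution of paths exiting at $\tfrac1m$ before time $T$ is bounded via (\ref{e:contat0+}) only by
\[
\sup_{s\le T}\tilde w\big(s,\tfrac1m\big)\,Q^x\big(T_{1/m}<T_n\big)\;\le\; K m\,\frac{\frac1x-\frac1n}{m-\frac1n}\;\longrightarrow\; K\Big(\frac1x-\frac1n\Big)\neq 0 \quad\mbox{as } m \rar \infty
\]
(using the scale function $1-1/x$ of $Y$), so this boundary term does \emph{not} disappear by the estimate your side conditions provide. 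The paper closes this case by invoking Proposition \ref{p:Yinvmart}: when $X$ never hits $0$, $1/Y$ is a true $Q^x$-martingale, so by optional sampling the offending term equals $K\,Q^x\big[\frac{1}{Y_T}\chf_{[T\geq\tau_{n,m}]}\chf_{[T_n>T_{1/m}]}\big]$, which vanishes as $m\rar\infty$ by dominated convergence because $\chf_{[T_n>T_{1/m}]}\rar\chf_{[T_n=\infty]}$ and $T_n<\infty$, $Q^x$-a.s. (since $Y\rar\infty$ at its lifetime). Without this case split and the martingale property of $1/Y$ --- neither of which appears in your proposal --- the interchanges you defer cannot all be legitimised, and the uniqueness claim remains unproved.
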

 Note that we do not require $\frac{g(x)}{x}$ to be bounded near $0$ in the above theorem. In particular, if $D^*=(0,\infty)$ and $g\equiv 1$, $w$ will be the solution of a Cauchy problem with the unbounded initial condition $\frac{1}{x}$. In this case the unique solution is given by $\frac{1}{x}=Q^x[\frac{1}{Y_t}]$ since $\frac{1}{Y}$ is a martingale when $X$ is strictly positive as observed in Proposition  \ref{p:Yinvmart}.
\begin{remark} In Theorem \ref{t:bsunique} the conditions (\ref{e:contat0+}) and (\ref{e:contat0})  are natural growth conditions near $0$ for the problem at hand given that we want $w$ satisfy $xw(t,x)=v(t,x)=E^x[g(X_t)]$. Indeed, $g(x) \leq K(1+x)$ implies $v(t,x)\leq K(1+x)$ since $X$ is a nonnegative local martingale, which in turn implies (\ref{e:contat0+}). Moreover, when $D^*=[0,\infty)$, $v$ will be uniformly continuous on $[0,t]\times[0,x]$ for all $x>0$ in view of the definition of a classical solution, which will lead to (\ref{e:contat0}).
	
  On the other hand, (\ref{w:bc}) must be imposed to achieve the intended uniqueness. Indeed, suppose that $X$ is a strictly positive strict local martingale and $g(x)=x$. Then both $1$ and $\frac{v(t,x)}{x}$ are classical solutions of (\ref{e:wpde}) with the initial condition (\ref{w:ic}) and satisfy the growth condition  (\ref{e:contat0+}). However, only $\frac{v(t,x)}{x}$ satisfies (\ref{w:bc}) as $\frac{v(t,x)}{x}=Q^x(\zeta>t)$. . 
\end{remark}
We end this section with the following immediate corollary to Theorem \ref{t:bsunique}, which implies that the function $x \mapsto E^x[X_t]$ is of strictly sublinear growth at infinity for  $t>0$.
\begin{corollary}
Suppose that $\sigma$ satisfies (\ref{e:ESR}) on $(0,\infty)$, (\ref{e:LMc}) and Assumption \ref{a:sholder}. Let $g$ be as in Theorem \ref{t:bsunique}. Then for every $t>0$
\[
\lim_{x \rar \infty}\frac{E^x[g(X_t)]}{x}=0,
\]
i.e. the function $x \mapsto E^x[g(X_t)]$ is of strictly sublinear growth at infinity. 
\end{corollary}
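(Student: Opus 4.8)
The plan is to read the statement off the multiplicative decomposition supplied by Theorem~\ref{t:bsunique}. For $x>0$ that theorem gives $E^x[g(X_t)]=x\,w(t,x)$, so that
\[
\frac{E^x[g(X_t)]}{x}=w(t,x),
\]
and the corollary becomes equivalent to $\lim_{x\rar\infty}w(t,x)=0$ for each fixed $t>0$. But $w$ is, by the very assertion of Theorem~\ref{t:bsunique}(2), a member of the uniqueness class, so it satisfies the boundary condition (\ref{w:bc}). Specializing (\ref{w:bc}) to the constant sequence $t_n\equiv t$ together with an arbitrary $x_n\uparrow\infty$ yields $w(t,x_n)\rar 0$; since $(x_n)$ was arbitrary, this is exactly $\lim_{x\rar\infty}w(t,x)=0$. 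This already closes the argument, and I expect the proof in the paper to be essentially this one-line deduction.

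Should a more self-contained argument be desired, I would instead start from the probabilistic representation (\ref{e:w}), namely $w(t,x)=Q^x[\frac{g(Y_t)}{Y_t}\chf_{[\zeta>t]}]$. Writing $g(y)\leq K(1+y)$ for the linear-growth constant $K$ gives $\frac{g(Y_t)}{Y_t}\leq K+\frac{K}{Y_t}$ on $[\zeta>t]$, whence
\[
w(t,x)\leq K\,Q^x(\zeta>t)+K\,Q^x\Big[\tfrac{1}{Y_t}\chf_{[\zeta>t]}\Big].
\]
Since the scale function of $Y$ is $1-\tfrac1x$, the process $\tfrac1Y$ (set equal to $0$ on $[\zeta\leq t]$, where $Y$ explodes to $\infty$) is a nonnegative $Q^x$-supermartingale started at $\tfrac1x$, so the second term is bounded by $\tfrac{K}{x}$ and vanishes as $x\rar\infty$.

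The remaining, and genuinely substantive, step is to show $\lim_{x\rar\infty}Q^x(\zeta>t)=0$: starting the limiting diffusion $Y$ ever higher must force its explosion time $\zeta=\inf\{t:Y_t=\infty\}$ below $t$ with probability tending to one, exploiting that $X$ is a strict local martingale so that $\zeta<\infty$ with positive probability (cf.\ Proposition~\ref{p:Xvshtr}). This is the only nontrivial point, and it is precisely what is encoded in (\ref{w:bc}) and established within the proof of Theorem~\ref{t:bsunique}; hence in the streamlined argument of the first paragraph there is nothing further to prove.
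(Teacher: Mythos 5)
Your proposal is correct and coincides with the paper's own argument: the paper offers no separate proof, presenting the corollary as an immediate consequence of Theorem \ref{t:bsunique}, exactly because $E^x[g(X_t)]=x\,w(t,x)$ where $w$ lies in the uniqueness class and hence satisfies (\ref{w:bc}), which specialized to $t_n\equiv t$ and $x_n\uparrow\infty$ gives $\lim_{x\rar\infty}w(t,x)=0$. Your alternative probabilistic sketch via (\ref{e:w}) is also sound but, as you note yourself, its one substantive ingredient ($\lim_{x\rar\infty}Q^x(\zeta>t)=0$) is precisely what is established inside the proof of Theorem \ref{t:bsunique}, so nothing beyond the one-line deduction is required.
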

\section{Optimal stopping} \label{s:OS}
In this section we will consider the  following optimal stopping problem for a regular diffusion on $(l,r)$ satisfying Assumption \ref{a:reg}:
\be \label{e:OSP}
V(x):=\sup_{\tau \leq \zeta}E^x[e^{-\lambda \tau}g(X_{\tau})],
\ee
where $\lambda>0$ and $\tau$ is any stopping time with the usual convention that $e^{-\lambda \tau}g(X_{\tau}(\om))=\limsup_{t\rar \infty}e^{-\lambda t}g(X_{t}(\om))$ if $\tau(\om)=\zeta(\om)=\infty$. Here $g$ is taken to be a nonnegative function that is continuous on $I$.  From a financial perspective $V$ can be interpreted as the price of a perpetual American option with payoff $g$ on a stock whose dynamics are governed by $X$ and is currently priced at $x$  while $\lambda$ equals  the constant interest rate.  
\begin{remark} \label{r:cetal}
	This problem has been considered by Cisse et al. in \cite{CPT}, where the authors also use change of measure techniques with certain implicit assumptions on their way towards a solution. For instance, the proof of the key Lemma 3.5 is based on a result of Shiryaev, which requires the continuity of the function $f$ (of their Lemma 3.5) in the one-point compactifaction of the state space by adding the cemetery state. This in particular requires the boundedness of $g$ in (\ref{e:OSP}) with a certain behaviour at the boundary points. Moreover, the Sturm-Liuoville equation on p.1251 that defines the family of excessive functions $\phi^B(x)=E^x[e^{-qT_B}]$, where $T_B=\inf\{t>0:X_t \notin (a,b)\}$, stipulates that $\phi^B(a+)=\phi^B(b-)=1$, for all $a$ and $b$ satisfying $l\leq a<b\leq r$. However, this immediately rules out the case when $X$ has infinite lifetime or an entrance boundary. Indeed, if $a=l$ and $b=r$, $T_B=\zeta$. Thus, if the diffusion has infinite lifetime, $\phi^B(x)=0$ for all $x\in (l,r)$, which leads to $\phi^B(l+)=\phi^B(r-)=0$ by continuity. Similarly, if $l$ is an entrance boundary, $a=l$, and $b<r$, then $\phi^B(l+)=1$ implies $E^l[e^{-qT_B}]=1$, i.e. $P^l(T_B=0)=1$. This is a contradiction to the assumption that $l$ is an entrance boundary which entails that  the diffusion immediately enters the open interval $(l,r)$ right after time $0$ and never returns to $l$. Consequently, $P^l(T_B=T_b)=1$, where $T_b:=\inf\{t>0:X_t=b\}$. Clearly, $P^l(T_b>0)=1$.
		
	The method that is described below is applicable to all regular one-dimensional diffusions satisfying Assumption \ref{a:reg}. Aside from the above restrictions the method of Cisse et al. requires the knowledge of all $\phi^B$ for all open sets $B$. As we shall see later, our solution only requires  the knowledge of  $u^{\lambda}(\cdot,y)$ for some $y \in (l,r)$.
\end{remark}
To ease the exposition and simplify the proofs we shall assume from now on that  $X$ is on natural scale. We will solve the above problem   using mainly the $\lambda$-potential kernel, $u^{\lambda}$, and the recurrent transform introduced in Proposition \ref{p:AOtr}.  We start with the following lemma, which is a direct consequence of Proposition \ref{p:step1}.
\begin{lemma} \label{l:AO}
	Let $X$ be a regular diffusion satisfying Assumption \ref{a:reg} on $(l,r)$, $y \in (l,r)$ be fixed, and $g$ be a nonnegative measurable function on $I$.  Then, for any stopping time $\tau$ and $\lambda>0$, we have
	\be
	E^x\left[e^{-\lambda \tau}g(X_{\tau})\chf_{[\tau< \zeta]}\right]=
	u^{\lambda}(x,y)E^{h,x}\left[\frac{g(X_{\tau})}{u^{\lambda}(X_{\tau},y)}\exp\left(-\frac{L^y_{\tau}}{2u^{\lambda}(y,y)}\right)\chf_{[\tau <\infty]}\right],
	\ee
	where $E^{h,x}$ is the expectation with respect to $R^{h,x}$, which is the law of the recurrent transform in Proposition \ref{p:AOtr}.
\end{lemma}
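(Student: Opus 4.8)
The plan is to read the identity off the absolute continuity relationship (\ref{e:ACrectr}) of Theorem \ref{t:rectr}, specialised to the recurrent transform of Proposition \ref{p:AOtr} taken with $\alpha=\lambda$. Since $X$ is assumed to be on natural scale we have $s'(y)=1$, so this transform is $h(x)=u^{\lambda}(x,y)$ together with $M_t=\exp\left(-\lambda t+\frac{L^y_t}{2u^{\lambda}(y,y)}\right)$, and $R^{h,x}$ is the law of the resulting (positively recurrent, hence non-exploding) diffusion. The single computation that drives everything is
\[
h(X_{\tau})M_{\tau}=u^{\lambda}(X_{\tau},y)\exp\left(-\lambda\tau+\frac{L^y_{\tau}}{2u^{\lambda}(y,y)}\right),
\]
which, up to the factor $e^{-\lambda\tau}g(X_{\tau})$, is the exact reciprocal of the weight appearing on the right-hand side of the claim.

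First I would upgrade (\ref{e:ACrectr}), stated for indicators $\chf_F$ with $F\in\cF_{\tau}$, to arbitrary nonnegative $\cF_{\tau}$-measurable functionals $\Phi$. By linearity the identity
\[
E^{h,x}\left[\Phi\,\chf_{[\tau<\infty]}\right]=\frac{1}{h(x)}E^{x}\left[\Phi\,h(X_{\tau})M_{\tau}\right]
\]
holds for simple $\Phi$ and then, by monotone convergence, for every nonnegative measurable $\Phi$. Here I invoke part (4) of Theorem \ref{t:rectr}, namely $\inf\{t>0:h(X_t)M_t=0\}=\zeta$, to interpret $h(X_{\tau})M_{\tau}$ as vanishing on $[\tau\geq\zeta]$; consequently the right-hand side is automatically supported on $[\tau<\zeta]$, while on the left $R^{h,x}(\zeta=\infty)=1$ (recurrence) means $\chf_{[\tau<\infty]}$ is the appropriate truncation.

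Then I would apply this with the specific choice
\[
\Phi:=\frac{g(X_{\tau})}{u^{\lambda}(X_{\tau},y)}\exp\left(-\frac{L^y_{\tau}}{2u^{\lambda}(y,y)}\right),
\]
which is nonnegative and $\cF_{\tau}$-measurable because $\tau$ is a stopping time and $X_{\tau}$, $L^y_{\tau}$ are $\cF_{\tau}$-measurable. On $[\tau<\zeta]$ the product telescopes to $\Phi\,h(X_{\tau})M_{\tau}=e^{-\lambda\tau}g(X_{\tau})$, and with the vanishing convention this is $e^{-\lambda\tau}g(X_{\tau})\chf_{[\tau<\zeta]}$. Substituting into the upgraded identity and multiplying through by $h(x)=u^{\lambda}(x,y)$ yields precisely the asserted formula.

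The argument is essentially bookkeeping, so there is no deep obstacle; the one point requiring care is the interface at the lifetime $\zeta$. One must verify that the weight $h(X_{\tau})M_{\tau}$ is strictly positive exactly on $[\tau<\zeta]$ and vanishes on $[\tau\geq\zeta]$, so that the implicit division by it in the choice of $\Phi$ is legitimate and so that the killing indicator $\chf_{[\tau<\zeta]}$ on the left is generated correctly; this is exactly what part (4) of Theorem \ref{t:rectr} supplies. A secondary, purely cosmetic point is to record that the normalisation $s'(y)=1$ coming from natural scale is what collapses the general exponent $\frac{s'(y)L^y_{\tau}}{2u^{\lambda}(y,y)}$ of Proposition \ref{p:AOtr} into the $\frac{L^y_{\tau}}{2u^{\lambda}(y,y)}$ appearing in the statement.
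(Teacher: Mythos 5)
Your proof is correct and takes essentially the same route as the paper: the paper presents Lemma \ref{l:AO} as a direct consequence of the recurrent transform of Proposition \ref{p:AOtr} (with $\alpha=\lambda$), i.e.\ of the absolute-continuity relation (\ref{e:ACrectr}) from Theorem \ref{t:rectr}, which is exactly the identity you upgrade to nonnegative $\cF_{\tau}$-measurable functionals and then specialise to $\Phi=\frac{g(X_{\tau})}{u^{\lambda}(X_{\tau},y)}\exp\bigl(-\frac{L^y_{\tau}}{2u^{\lambda}(y,y)}\bigr)$. Your attention to the interface at the lifetime via part (4) of Theorem \ref{t:rectr} and to the natural-scale normalisation $s'(y)=1$ supplies precisely the bookkeeping the paper leaves implicit.
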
 

	Thus, the recurrent transform associated to $u^{\lambda}$ removes the discounting in the optimal stopping problem making it more tractable. We shall apply one more transformation to get rid of the local time factor in order to make the problem one-dimensional again. However, this recurrent transform will already give us the necessary condition for the finiteness of the optimal stopping problem in (\ref{e:OSP}) once we have the result from the next lemma. Throughout this section $E^{h,x}$ and $R^{h,x}$  will correspond to the expectation operator and the law  associated to  the solutions of (\ref{e:AOtr}), whose scale function can be chosen as follows for a given $y \in (l,r)$:
	\be \label{e:A0sr}
	s_h(x)=\int_y^x\frac{1}{(u^{\lambda}(z,y))^2}dz.
	\ee
	\begin{lemma} \label{l:LTab} For any $l<a <b <r$ and $x,y \in (a,b)$ we have
		\bea
		E^{h,x}\left[\chf_{[T_a<T_b]}\exp\left(-\frac{L^y_{T_a}}{2u^{\lambda}(y,y)}\right)\right]&=&\frac{R^{h,x}(T_a<T_b))}{1+s_a(b;y)s_h(b)^2u^{\lambda}(y,y)}, \mbox{ for } y \leq x;
		\\
		E^{h,x}\left[\chf_{[T_b<T_a]}\exp\left(- \frac{L^y_{T_b}}{2u^{\lambda}(y,y)}\right)\right]&=&\frac{R^{h,x}(T_b<T_a)}{1+s_b(a;y)s_h(a)^2u^{\lambda}(y,y)}, \mbox{ for } y \geq x
		\eea
		where
		\bean
		&&R^{h,x}(T_a<T_b)=\frac{s_h(b)-s_h(x)}{s_h(b)-s_h(a)}, \mbox{ and } \\
		&&s_a(b;x):=\int_a^x\frac{s_h'(z)}{(s_h(b)-s_h(z))^2}dz, \;s_b(a;x):=1-\int_x^b\frac{s_h'(z)}{(s_h(z)-s_h(a))^2}dz.
		\eean
	\end{lemma}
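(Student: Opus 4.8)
The plan is to treat both identities as a single Dirichlet/elastic boundary value problem for the recurrent transform and its local time at $y$, carrying out the first case ($y\leq x$) in full and obtaining the second by the reflection $a\leftrightarrow b$. Throughout write $\kappa:=\frac{1}{2u^{\lambda}(y,y)}$ and recall that under $R^{h,x}$ the coordinate process $X$ is a regular diffusion on natural scale $s_h$ given by (\ref{e:A0sr}), so that $s_h(y)=0$ and $s_h'(y)=(u^{\lambda}(y,y))^{-2}$, and its generator $\cL^h$ annihilates every function that is affine in $s_h$. The weight $\exp(-\kappa L^y_{\cdot})$ is exactly the survival probability obtained by killing this diffusion through its local time at $y$; hence $E^{h,x}[\chf_{[T_a<T_b]}\exp(-\kappa L^y_{T_a})]$ is the probability that the elastically killed transform is absorbed at $a$ (before $b$ and before being killed).

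Set $v(x):=E^{h,x}[\chf_{[T_a<T_b]}\exp(-\kappa L^y_{T_a})]$ for $x\in(a,b)$. Since no local time accrues away from $y$ and $\cL^h$ kills $s_h$-affine functions, the ansatz is that $v$ is affine in $s_h$ on each of $(a,y)$ and $(y,b)$, say $v=\alpha+\beta s_h$ on $(a,y)$ and $v=\gamma+\delta s_h$ on $(y,b)$, continuous at $y$, with Dirichlet data $v(a+)=1$ and $v(b-)=0$ read off from the two absorbing endpoints. The remaining interface condition at $y$ is the crux: applying the It\^o--Tanaka formula to $N_t:=v(X_t)\exp(-\kappa L^y_t)$, and using that $\exp(-\kappa L^y)$ has finite variation (so contributes no covariation), the $dL^y_t$-terms combine into $\exp(-\kappa L^y_t)\big[\half(v'(y+)-v'(y-))-\kappa\, v(y)\big]dL^y_t$. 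For $N^{T_a\wedge T_b}$ to be a local martingale this bracket must vanish, giving the elastic jump $\half(v'(y+)-v'(y-))=\kappa\, v(y)$; rewriting through $v'(y\pm)=(\text{slope in }s_h)\,s_h'(y)$ and $2\kappa=(u^{\lambda}(y,y))^{-1}$, $s_h'(y)=(u^{\lambda}(y,y))^{-2}$, this becomes the clean condition $\delta-\beta=u^{\lambda}(y,y)\,v(y)$.

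With these four linear conditions ($v(a)=1$, $v(b)=0$, continuity at $y$, and $\delta-\beta=u^{\lambda}(y,y)v(y)$) I would solve for $\alpha,\beta,\gamma,\delta$, using $s_h(y)=0$ so that $v(y)=\alpha=\gamma$. To confirm that this analytically constructed $v$ is the stated expectation, I run the verification backwards: the vanishing bracket makes $N^{T_a\wedge T_b}$ a local martingale; it is bounded, since $v$ is bounded on $[a,b]$ and $0\le\exp(-\kappa L^y)\le 1$; and $T_a\wedge T_b<\infty$ holds $R^{h,x}$-a.s. because a regular diffusion leaves the compact subinterval $[a,b]$ in finite time. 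Optional stopping then yields $v(x)=E^{h,x}[v(X_{T_a\wedge T_b})\exp(-\kappa L^y_{T_a\wedge T_b})]=E^{h,x}[\chf_{[T_a<T_b]}\exp(-\kappa L^y_{T_a})]$, the $b$-endpoint contributing nothing because $v(b)=0$.

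Finally I read off the closed form. For $x\in(y,b)$ one gets $v(x)=\alpha\,\frac{s_h(b)-s_h(x)}{s_h(b)}$, so the $x$-dependence is precisely $R^{h,x}(T_a<T_b)=\frac{s_h(b)-s_h(x)}{s_h(b)-s_h(a)}$, and the leftover constant simplifies, after evaluating the elementary integral $\int_a^y\frac{s_h'(z)}{(s_h(b)-s_h(z))^2}dz=s_a(b;y)$ via the substitution $w=s_h(z)$ (using $s_h(y)=0$), to the announced denominator $1+s_a(b;y)\,s_h(b)^2\,u^{\lambda}(y,y)$. The second identity follows verbatim with $a$ and $b$ interchanged, now with Dirichlet data $v(b)=1$, $v(a)=0$, the same elastic jump at $y$, and the analogous integral over $(y,b)$ and factor $s_h(a)^2$ producing $s_b(a;y)$. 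The main obstacle is the correct derivation of the elastic interface condition at $y$, i.e. pinning down both its sign and the constant $2\kappa=(u^{\lambda}(y,y))^{-1}$ from It\^o--Tanaka, together with the routine but necessary justification that $N^{T_a\wedge T_b}$ is a genuine boundedly-stopped martingale so that optional stopping applies.
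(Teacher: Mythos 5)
Your proof is correct in substance but follows a genuinely different route from the paper's. The paper argues probabilistically: it conditions on the event $\{T_a<T_b\}$ (an $h$-transform of the killed diffusion via the harmonic function $z\mapsto\frac{s_h(b)-s_h(z)}{s_h(b)-s_h(a)}$), notes that under the conditioned law the process a.s.\ passes through $y$ so the strong Markov property reduces everything to starting at $y$, and then invokes the exponential law (\ref{e:lawLT}) of the total local time $L^y_\infty$ of a transient diffusion --- with rate expressed through the conditioned scale $s_a(b;\cdot)$ and the conditioned potential density from (\ref{e:psifi})--(\ref{e:psiif}) --- finishing with the Laplace transform of an exponential random variable. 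You instead solve an elastic-killing boundary value problem: a piecewise $s_h$-affine ansatz with Dirichlet data at $a$ and $b$, the interface condition $\half(v'(y+)-v'(y-))=\kappa v(y)$ extracted from It\^o--Tanaka, and verification by optional stopping of the bounded local martingale $v(X_t)e^{-\kappa L^y_t}$ stopped at $T_a\wedge T_b$. Your route is self-contained (it needs neither (\ref{e:lawLT}) nor the $h$-transform formalism nor the potential-density formulas for the conditioned process), at the cost of an explicit linear-system computation; the paper's route is shorter given that machinery and exhibits the structural origin of the denominator as the Laplace transform of an exponentially distributed total local time. I checked your interface condition: with $s_h'(y)=(u^{\lambda}(y,y))^{-2}$ and $\kappa=(2u^{\lambda}(y,y))^{-1}$ it indeed reads $\delta-\beta=u^{\lambda}(y,y)v(y)$, and solving your four conditions reproduces the first displayed identity exactly.

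One point you should not gloss over: in the second case your method (and, in fact, the paper's own method) produces the denominator $1+\big(\int_y^b\frac{s_h'(z)}{(s_h(z)-s_h(a))^2}dz\big)s_h(a)^2u^{\lambda}(y,y)$. Under the definition given in the statement, $\int_y^b\frac{s_h'(z)}{(s_h(z)-s_h(a))^2}dz=1-s_b(a;y)$, not $s_b(a;y)$, whereas your closing sentence asserts that this integral ``produces $s_b(a;y)$'', which is literally inconsistent with the stated definition. The mathematics you outline is right: a direct computation with $s_h(a)<0<s_h(b)$ gives $v(x)=\frac{s_h(x)-s_h(a)}{s_h(b)-s_h(a)-u^{\lambda}(y,y)s_h(a)s_h(b)}$ for $x\le y$, which equals $\frac{R^{h,x}(T_b<T_a)}{1+(1-s_b(a;y))s_h(a)^2u^{\lambda}(y,y)}$. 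So the discrepancy is a normalisation typo in the lemma itself --- the definition of $s_b(a;\cdot)$ carries a spurious ``$1-$'' relative to the symmetric definition of $s_a(b;\cdot)$ --- but a careful write-up must flag this explicitly rather than silently identify the integral with $s_b(a;y)$.
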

\begin{proof}
 Suppose $y \leq x$. Let us kill the recurrent transform as soon as it hits $a$ or $b$ and then apply an $h$-transform via 
\[
R^{h,x}(T_a<T_b)=\frac{s_h(b)-s_h(x)}{s_h(b)-s_h(a)}.
\]

This $h$-transform conditions the diffusion to converge to $a$. Thus, if we denote the law of this $h$-transform by $R^{h,a,x}$ and its potential kernel by ${u}_a$ (by dropping the dependence on $b$ to ease the notation), then
\begin{multline*}
E^{h,x}\left[\chf_{[T_a<T_b]}\exp\left(-c L^y_{T_a}\right)\right]=R^{h,x}(T_a<T_b)E^{h,a,x}\left[\exp\left(-c L^y_{\infty}\right)\right]\\
=R^{h,x}(T_a<T_b)E^{h,a,y}\left[\exp\left(-c L^y_{\infty}\right)\right]
=R^{h,x}(T_a<T_b)\frac{\frac{s_a'(b;y)}{2{u}_a(y,y)}}{c+\frac{s_a'(b;y)}{2{u}_a(y,y)}}
\end{multline*}
since $R^{h,a,x}(T_y<\infty)=1$ for $y \leq x$, $L^y_{\infty}$ is exponentially distributed under $R^{h,a,y}$ with parameter $\frac{s_a'(b;y)}{2{u}_a(y,y)}$, and $s_a$ is a scale function  of the above $h$-transform. Substituting $c$ with $ (2 u^{\lambda}(y,y))^{-1}$ and noticing $s_h(y)=0$, we arrive at  
\[
E^{h,x} \left[\chf_{[T_a<T_b]}\exp\left(- \frac{L^y_{T_a}}{2u^{\lambda}(y,y)}\right)\right]=\frac{s_h(b)-s_h(x)}{s_h(b)-s_h(a)}\frac{1}{1+s_a(b;y)s_h(b)^2u^{\lambda}(y,y)}.
\]

Similarly, for $y \geq x$,
	\[
	E^{h,x}\left[\chf_{[T_b<T_a]}\exp\left(- \frac{L^y_{T_b}}{2u^{\lambda}(y,y)}\right)\right]=\frac{s_h(x)-s_h(a)}{s_h(b)-s_h(a)}\frac{1}{1+s_b(a;y)s_h(a)^2u^{\lambda}(y,y)}.
	\]
	
	\end{proof}
\begin{proposition}\label{p:Vfinite}
	Let $x \in (l,r)$ be fixed and consider the value function, $V$, defined in (\ref{e:OSP}). If $V(x)$ is finite, then
	\be \label{e:NCopt}
	\liminf_{a \rar l}\frac{g(a)}{u^{\lambda}(a,x)s_h(a)}>-\infty \mbox{ and } \limsup_{b \rar r}\frac{g(b)}{u^{\lambda}(b,x)s_h(b)}<\infty.
	\ee
\end{proposition}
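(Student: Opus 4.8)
The plan is to bound $V(x)$ from below by the payoff obtained from the simplest admissible strategies---stopping at the first hitting time of a single point near a boundary---and then to convert the resulting growth restriction on $g$ into the two conditions in (\ref{e:NCopt}) by means of the representation (\ref{e:ualpha}) of $u^\lambda$ through the fundamental solutions $\psi_\lambda,\phi_\lambda$ of $\bbA f=\lambda f$.

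First I would fix $b\in(x,r)$ and use $\tau=T_b$ as a competitor in the supremum (\ref{e:OSP}); this is admissible because $b\in(l,r)$ forces $T_b\le\zeta$ and $X_{T_b}=b$ on $\{T_b<\infty\}$. Since $g\ge0$, discarding the event $\{T_b=\infty\}$ and invoking (\ref{e:LThittime}) together with (\ref{e:ualpha}) gives
\[
V(x)\ge E^x[e^{-\lambda T_b}g(X_{T_b})]\ge g(b)\,E^x[e^{-\lambda T_b}]=g(b)\frac{u^\lambda(x,b)}{u^\lambda(b,b)}=g(b)\frac{\psi_\lambda(x)}{\psi_\lambda(b)},
\]
so that $g(b)/\psi_\lambda(b)\le V(x)/\psi_\lambda(x)$ for every $b>x$. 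The mirror-image choice $\tau=T_a$ with $a\in(l,x)$ yields $g(a)/\phi_\lambda(a)\le V(x)/\phi_\lambda(x)$. Thus finiteness of $V(x)$ already bounds $g$ by a multiple of $\psi_\lambda$ near $r$ and of $\phi_\lambda$ near $l$.

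The core of the argument is then to show that $u^\lambda(\cdot,x)s_h(\cdot)$ is comparable to $\psi_\lambda$ near $r$ and that $u^\lambda(\cdot,x)|s_h|$ is comparable to $\phi_\lambda$ near $l$. Using (\ref{e:ualpha}) and the definition (\ref{e:A0sr}) of $s_h$, together with the antiderivative identities $\int_y^b\phi_\lambda(z)^{-2}dz=w_\lambda^{-1}(\psi_\lambda(b)/\phi_\lambda(b)-\psi_\lambda(y)/\phi_\lambda(y))$ and $\int_a^y\psi_\lambda(z)^{-2}dz=w_\lambda^{-1}(\phi_\lambda(a)/\psi_\lambda(a)-\phi_\lambda(y)/\psi_\lambda(y))$, which follow from $(\phi_\lambda/\psi_\lambda)'=-w_\lambda/\psi_\lambda^2$ and $(\psi_\lambda/\phi_\lambda)'=w_\lambda/\phi_\lambda^2$, a direct computation gives for $b>\max(x,y)$
\[
\frac{u^\lambda(b,x)s_h(b)}{\psi_\lambda(b)}=\frac{\psi_\lambda(x)}{\psi_\lambda(y)^2}\left(1-\frac{\psi_\lambda(y)}{\phi_\lambda(y)}\frac{\phi_\lambda(b)}{\psi_\lambda(b)}\right),
\]
and an analogous expression involving $\phi_\lambda(a)$ near $l$. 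Since $\phi_\lambda/\psi_\lambda$ is strictly decreasing, the bracket is positive and monotone increasing in $b$, hence bounded away from $0$ and from above as $b\rar r$; the same monotonicity controls the corresponding bracket near $l$. Combining this with the previous paragraph, and keeping track of the sign of $s_h$ (negative on $(l,y)$, positive on $(y,r)$), yields $\limsup_{b\rar r}g(b)/(u^\lambda(b,x)s_h(b))<\infty$ and $\liminf_{a\rar l}g(a)/(u^\lambda(a,x)s_h(a))>-\infty$.

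I expect the main obstacle to be this last comparison step rather than the probabilistic lower bound, which is immediate. The delicate points are the correct pairing of $\psi_\lambda$ and $\phi_\lambda$ in (\ref{e:ualpha}) on either side of the base points $x$ and $y$, the bookkeeping of the sign of $s_h$ (so that a bound from above near $r$ becomes a bound from below near $l$), and verifying that the relevant brackets stay bounded away from $0$---for which the monotonicity of $\phi_\lambda/\psi_\lambda$ coming from the Wronskian is exactly what is needed. Finally, since the base point $y$ enters only through the positive constants $\psi_\lambda(y)^{-2}$ and $\phi_\lambda(y)^{-2}$, the conclusion is manifestly independent of the $y$ used to normalise $s_h$.
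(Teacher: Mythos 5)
Your proof is correct, but it takes a genuinely different route from the paper's. The paper argues by contradiction inside its transform machinery: assuming (\ref{e:NCopt}) fails along a sequence $a_n \rar l$, it rewrites $E^x\left[e^{-\lambda T_{a_n}}g(X_{T_{a_n}})\right]$ under the recurrent transform of Proposition \ref{p:AOtr} via Lemma \ref{l:AO} (with $y=x$), lower-bounds the local-time functional $E^{h,x}\left[\exp\left(-L^x_{T_{a_n}}/(2u^{\lambda}(x,x))\right)\right]$ by Lemma \ref{l:LTab}, and computes $\lim_{b \rar r}s_{a_n}(b;x)s_h^2(b)=-s_h(a_n)$ by dominated convergence to conclude that these stopping payoffs explode, contradicting $V(x)<\infty$. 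You instead bound $V(x)$ from below by the same hitting-time strategies but evaluate them directly with (\ref{e:LThittime}) and (\ref{e:ualpha}), giving $g(b)/\psi_{\lambda}(b)\leq V(x)/\psi_{\lambda}(x)$ and $g(a)/\phi_{\lambda}(a)\leq V(x)/\phi_{\lambda}(x)$, and then convert these into (\ref{e:NCopt}) by a Wronskian computation showing $u^{\lambda}(\cdot,x)s_h(\cdot)$ is comparable to $\psi_{\lambda}$ near $r$ and $u^{\lambda}(\cdot,x)|s_h(\cdot)|$ to $\phi_{\lambda}$ near $l$; your identity $u^{\lambda}(b,x)s_h(b)/\psi_{\lambda}(b)=\frac{\psi_{\lambda}(x)}{\psi_{\lambda}^2(y)}\bigl(1-\frac{\psi_{\lambda}(y)\phi_{\lambda}(b)}{\phi_{\lambda}(y)\psi_{\lambda}(b)}\bigr)$ checks out, as does the positivity and monotonicity of the bracket coming from $(\phi_{\lambda}/\psi_{\lambda})'<0$. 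In substance both proofs estimate the same object --- the value of stopping at $T_a$ or $T_b$ --- but yours is more elementary and self-contained (it needs neither Lemma \ref{l:AO} nor Lemma \ref{l:LTab}, only classical facts about $u^{\lambda}$), and it makes explicit the link, noted in the remark following Lemma \ref{l:nc}, with the finiteness criterion of \cite{LZ} phrased in terms of $\psi_{\lambda}$ and $\phi_{\lambda}$; the paper's route keeps everything expressed through $u^{\lambda}(\cdot,y)$ and $s_h$ alone, consistent with its stated aim of requiring only the $\lambda$-potential density, and reuses lemmas that are needed for Theorem \ref{t:OS} anyway.

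Two small points to tighten. First, $T_b\leq \zeta$ is not automatic (it fails on $\{T_b=\infty,\,\zeta<\infty\}$), so the competitor should be $\tau=T_b\wedge\zeta$; your ``discarding the event'' step then works because $g\geq 0$ and $\{T_b<\infty\}=\{T_b<\zeta\}$, the process being absorbed at the boundaries. Second, your antiderivative identities $(\psi_{\lambda}/\phi_{\lambda})'=w_{\lambda}/\phi_{\lambda}^2$ and $(\phi_{\lambda}/\psi_{\lambda})'=-w_{\lambda}/\psi_{\lambda}^2$ use that $X$ is on natural scale ($s'\equiv 1$); this is indeed assumed throughout Section \ref{s:OS}, but it is worth saying, since otherwise factors of $s'$ enter the Wronskian and the formulas change.
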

\begin{proof}
	Suppose that (\ref{e:NCopt}) is violated. Then, either $\liminf_{a \rar l}\frac{g(a)}{u^{\lambda}(a,x)s_h(a)}=-\infty$ or $\limsup_{b \rar r}\frac{g(b)}{u^{\lambda}(b,x)s_h(b)}=\infty$ or both. Suppose it is the former statement and, thus, there exists a sequnce $(a_n)$ with $a_n \rar l$ and 
		\be \label{e:explosion}
		\lim_{n \rar \infty}\frac{g(a_n)}{u^{\lambda}(a_n,x)s_h(a_n)}=-\infty.
		\ee
	Then, we claim that 
	\[
	\lim_{n \rar \infty} E^x\left[e^{-\lambda T_{n}}g(X_{T_n})\right]=\infty,
	\]
	where $T_n :=T_{a_n}\wedge \zeta$, which is in contradiction with  the hypothesis that $V(x)<\infty$. 
	
	Indeed, by Lemma \ref{l:AO}  and taking $y=x$, we have
	\bean
	E^x\left[e^{-\lambda T_{n}} g(X_{T_n})\chf_{[T_n<\zeta]}\right]&=&u^{\lambda}(x,x)E^{h,x}\left[\frac{g(X_{T_n})}{u^{\lambda}(X_{T_n},x)}\exp\left(-\frac{L^x_{T_n}}{2u^{\lambda}(x,x)}\right)\chf_{[T_n<\infty]}\right]\\
	&=&u^{\lambda}(x,x)\frac{g(a_n)}{u^{\lambda}(a_n,x)}E^{h,x}\left[\exp\left(-\frac{L^x_{T_{n}}}{2u^{\lambda}(x,x)}\right)\right],
	\eean
	where the last line is due to the recurrence of $X$ under $R^{h,x}$. However, Lemma \ref{l:LTab} together with the nonnegativity of $g$ now yield
	\[
	E^x\left[e^{-\lambda T_{n}} g(X_{T_n})\right]\geq u^{\lambda}(x,x)\frac{g(a_n)}{u^{\lambda}(a_n,x)}\lim_{b \rar r}\frac{1}{1+s_{a_n}(b;x)s_h^2(b)u^{\lambda}(x,x)}.
	\]
	On the other hand,
	\[
	\lim_{b \rar r}s_{a_n}(b;x)s_h^2(b)=\lim_{b \rar r}\int_{a_n}^x\frac{s_h'(z)s_h^2(b)}{(s_h(b)-s_h(z))^2}dz=\int_{a_n}^x\lim_{b \rar r}\frac{s_h'(z)s_h^2(b)}{(s_h(b)-s_h(z))^2}dz=-s_h(a_n)
	\]
	by the dominated convergence theorem. Recall that, since $x=y$, $s_h(x)=0$ by (\ref{e:A0sr}). Thus, the claim follows from (\ref{e:explosion}). 
	
	If, instead, $\limsup_{b \rar r}\frac{g(b)}{u^{\lambda}(b,x)s_h(b)}=\infty$, a similar construction  shows that $V(x)=\infty$ in that case, too. 
	\end{proof}
The above result shows that the boundedness of
\be \label{e:nc0}
z \mapsto \frac{g(z)}{u^{\lambda}(z,x)(1+|s_h(z)|)}
\ee
is necessary in order for $V(x)$ to be finite. In fact the condition (\ref{e:nc0}) is independent of $x$ and ensures $V(x)<\infty$ {\em for all} $x$, as one can also guess from the strong Markov property of $X$.
\begin{lemma} \label{l:nc}
	The mapping in (\ref{e:nc0}) is bounded if and only if for some $y \in (l,r)$
	\be \label{e:nc}
	z \mapsto \frac{g(z)}{u^{\lambda}(z,y)(1+|s_h(z)|)}
	\ee
	is bounded, where $s_h$ is defined by (\ref{e:A0sr}).
\end{lemma}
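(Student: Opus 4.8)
The plan is to prove the stronger, fully symmetric statement that the boundedness condition does not depend on the base point at all: for any two points $x,y\in(l,r)$ the two denominators
\[
D_x(z):=u^{\lambda}(z,x)\bigl(1+|s_h^x(z)|\bigr)\quad\text{and}\quad D_y(z):=u^{\lambda}(z,y)\bigl(1+|s_h^y(z)|\bigr)
\]
are comparable, i.e.\ there exist constants $0<k\le K<\infty$ with $k\,D_y(z)\le D_x(z)\le K\,D_y(z)$ for all $z\in(l,r)$; here $s_h^x,s_h^y$ denote the scale functions from (\ref{e:A0sr}) built on the base points $x$ and $y$ respectively. Once this is established, the factorisation $g(z)/D_x(z)=\bigl(g(z)/D_y(z)\bigr)\bigl(D_y(z)/D_x(z)\bigr)$ shows that $g/D_x$ is bounded iff $g/D_y$ is bounded, and the lemma follows on taking $x$ to be the fixed point appearing in (\ref{e:nc0}). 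Since $g$ is continuous (hence locally bounded) on $I$ and each $D_{\bullet}$ is continuous and strictly positive on $(l,r)$, the only way $g/D_{\bullet}$ can fail to be bounded is near $l$ or near $r$; thus it suffices to verify the comparability of $D_x$ and $D_y$ near each boundary, its validity on compact subintervals being immediate from continuity and positivity.

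For the boundary analysis I would use the factorisation (\ref{e:ualpha}), $u^{\lambda}(z,w)=\psi_{\lambda}(z\wedge w)\phi_{\lambda}(z\vee w)/w_{\lambda}$, where $\psi_{\lambda}$ and $\phi_{\lambda}$ are the increasing and decreasing solutions of $\bbA f=\lambda f$. Near $r$, for $z$ larger than $x\vee y$ one has $u^{\lambda}(w,x)=\psi_{\lambda}(x)\phi_{\lambda}(w)/w_{\lambda}$ for every $w\in(x,z)$, so that $s_h^x(z)=\frac{w_{\lambda}^{2}}{\psi_{\lambda}(x)^{2}}\int_{x}^{z}\phi_{\lambda}(w)^{-2}\,dw$ and
\[
D_x(z)=\frac{\psi_{\lambda}(x)}{w_{\lambda}}\phi_{\lambda}(z)+\frac{w_{\lambda}}{\psi_{\lambda}(x)}\phi_{\lambda}(z)\int_{x}^{z}\frac{dw}{\phi_{\lambda}(w)^{2}},
\]
with the analogous identity for $D_y$. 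By Proposition \ref{p:AOtr} the pair $(u^{\lambda}(\cdot,x),M)$ is a recurrent transform, so $s_h^x(r-)=\infty$, equivalently $\int^{r}\phi_{\lambda}(w)^{-2}\,dw=\infty$; hence the second term dominates and
\[
D_x(z)\sim\frac{w_{\lambda}}{\psi_{\lambda}(x)}\phi_{\lambda}(z)\int_{x}^{z}\frac{dw}{\phi_{\lambda}(w)^{2}}\qquad(z\to r).
\]
As $\int_{x}^{z}\phi_{\lambda}^{-2}$ and $\int_{y}^{z}\phi_{\lambda}^{-2}$ differ by the fixed constant $\int_{x}^{y}\phi_{\lambda}^{-2}$ while both diverge, their ratio tends to $1$, and therefore $D_x(z)/D_y(z)\to\psi_{\lambda}(y)/\psi_{\lambda}(x)\in(0,\infty)$. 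A symmetric computation near $l$, using $u^{\lambda}(w,x)=\phi_{\lambda}(x)\psi_{\lambda}(w)/w_{\lambda}$ for $w<x\wedge y$ together with $-s_h^x(l+)=\infty$, gives $D_x(z)/D_y(z)\to\phi_{\lambda}(y)/\phi_{\lambda}(x)\in(0,\infty)$.

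Thus $D_x/D_y$ extends to a continuous, strictly positive function on the closed interval $[l,r]$, and so is bounded above and below by positive constants on all of $(l,r)$; this is precisely the comparability claimed. The only genuinely delicate step is the boundary asymptotics of $s_h$: one must confirm that the constant term $1$ in $1+|s_h|$ is asymptotically negligible against the diverging scale integral, and this is exactly the point at which the recurrence of the transform — the condition $-s_h(l+)=s_h(r-)=\infty$ guaranteed by Proposition \ref{p:AOtr} — is indispensable. Everything else reduces to elementary estimates on the monotone, strictly positive fundamental solutions $\psi_{\lambda},\phi_{\lambda}$, which present no integrability difficulties on compact subintervals.
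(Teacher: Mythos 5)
Your proof is correct, and it takes a genuinely different route from the paper's. The paper argues probabilistically: it reduces the lemma to showing $\sup_{z} u^{\lambda}(z,x)/u^{\lambda}(z,y)<\infty$, and establishes this by combining the symmetry of the $\lambda$-potential density with the hitting-time identity (\ref{e:LThittime}) and the strong Markov property, so that for instance as $z\to l$ the ratio converges to $E^x[e^{-\lambda T_y}]$ (or its reciprocal, depending on the ordering of $x$ and $y$), and similarly at $r$. You instead argue analytically through the factorisation (\ref{e:ualpha}), $u^{\lambda}(z,w)=\psi_{\lambda}(z\wedge w)\phi_{\lambda}(z\vee w)/w_{\lambda}$, compute the scale function of the transform explicitly near each boundary, and show that the ratio of the two full denominators has finite strictly positive limits, $\psi_{\lambda}(y)/\psi_{\lambda}(x)$ at $r$ and $\phi_{\lambda}(y)/\phi_{\lambda}(x)$ at $l$. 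Each approach buys something. Yours is more complete on a point the paper leaves implicit: (\ref{e:nc0}) and (\ref{e:nc}) involve two \emph{different} scale functions (one built from $u^{\lambda}(\cdot,x)$, the other from $u^{\lambda}(\cdot,y)$ via (\ref{e:A0sr})), and you compare the products $u^{\lambda}(\cdot,\cdot)\bigl(1+|s_h|\bigr)$ directly, with the divergence $-s_h(l+)=s_h(r-)=\infty$ guaranteed by Proposition \ref{p:AOtr} doing exactly the work of absorbing both the additive constant $1$ and the base-point discrepancy; in the paper this reduction (comparability of the $u^{\lambda}$ factors implies comparability of the $1+|s_h|$ factors) is compressed into the opening ``it suffices to show'' and left to the reader, though it does follow since a two-sided bound on $u^{\lambda}(\cdot,x)/u^{\lambda}(\cdot,y)$ transfers to the scale integrals. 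Conversely, the paper's argument is shorter and needs nothing about $\psi_{\lambda},\phi_{\lambda}$ beyond (\ref{e:LThittime}); your factorisation, on the other hand, reveals the stronger fact that $u^{\lambda}(z,x)/u^{\lambda}(z,y)$ is \emph{exactly constant} for $z$ outside $[x\wedge y, x\vee y]$, which recovers the paper's key estimate instantly and yields two-sided comparability with explicit limiting constants.
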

\begin{proof}
	It suffices to show that
	\[
	\sup_a\frac{u^{\lambda}(a,x)}{u^{\lambda}(a,y)}<\infty.
	\]

Indeed, by the symmetry  property of the potential kernels and (\ref{e:LThittime})
\[
\lim_{a \rar l}\frac{u^{\lambda}(a,x)}{u^{\lambda}(a,y)}=\lim_{a \rar l}\frac{u^{\lambda}(x,a)}{u^{\lambda}(y,a)}=\lim_{a \rar l}\frac{E^x[e^{-\lambda T_a}]}{E^y[e^{-\lambda T_a}]}.
\]
Moreover, if $x>y$, $E^x[e^{-\lambda T_a}]=E^y[e^{-\lambda T_a}]E^x[e^{-\lambda T_y}]$ by the strong Markov property. Thus, for $a<y<x$, $\lim_{a \rar l}\frac{u^{\lambda}(a,x)}{u^{\lambda}(a,y)}=E^x[e^{-\lambda T_y}]$.
Similarly, for $a<x<y$, $\lim_{a \rar l}\frac{u^{\lambda}(a,x)}{u^{\lambda}(a,y)}=\frac{1}{E^y[e^{-\lambda T_x}]}$.
The strong Markov property can be used also to show $\lim_{a \rar r}\frac{u^{\lambda}(a,x)}{u^{\lambda}(a,y)}<\infty$, concluding the proof.
\end{proof}
\begin{remark}
	A similar condition for the finiteness of the value function can be found in Part (I) of Theorem 6.3 in \cite{LZ}. Namely, the value function is finite if and only if
	\[
	\limsup_{x \rar l} \frac{g(x)}{\phi_{\alpha}(x)}<\infty  \mbox{ and } \limsup_{x \rar r} \frac{g(x)}{\psi_{\alpha}(x)}<\infty,
	\]
	where $\phi_{\alpha}$ and $\psi_{\alpha}$ are the fundamental solutions appearing in (\ref{e:ualpha}). On the other hand, (\ref{e:nc}) is equivalent to
	\[
	\limsup_{x \rar l} \frac{g(x)}{\psi_{\alpha}(x)|s_h(x)|}<\infty  \mbox{ and } \limsup_{x \rar r} \frac{g(x)}{\phi_{\alpha}(x)s_h(x)}<\infty.
	\]
	Combining the two conditions allows us to conclude that  $\frac{\phi^{\alpha}(x)}{\psi^{\alpha}(x)|s_h(x)|}$ (resp. $\frac{\psi^{\alpha}(x)}{\phi^{\alpha}(x)s_h(x)}$) remain bounded as $x \rar l$ (resp. $x \rar r$) when the above limits are nonzero.
\end{remark}
The above discussion justifies the following
\begin{assumption} \label{a:OS}
	For some (thus, for all) $y\in (l,r)$  the mapping in (\ref{e:nc}) is bounded.
\end{assumption}
The denominator in (\ref{e:nc}) should remind us of the transformation discussed in Section \ref{s:another}. Indeed, let us fix a $y \in (l,r)$ and remind ourselves that $(R^{h,x})_{x \in (l,r)}$ corresponds to the recurrent transform in Proposition \ref{p:AOtr} for $\alpha =\lambda$. Note that we can choose its scale function to be $s_h$ that is defined in (\ref{e:A0sr})  and satisfies $s_h(y)=0$.  The following follows immediately from Proposition \ref{p:ttforr} and Lemma \ref{l:AO}.
\begin{proposition} \label{p:AOtt} Suppose $X$ is a regular diffusion on natural scale satisfying Assumption \ref{a:reg} and let $c=\frac{u^{\lambda}(y,y)}{2}$. Then
	\begin{enumerate}
		\item For any $x \in (l,r)$ there exists a unique weak solution to 
		\be \label{e:AOtt}
		X_t= x+ \int_0^t \sigma(X_s)dB_s +\int_0^t \left\{\sigma^2(X_s)\frac{u^{\lambda}_x(X_s,y)}{u^{\lambda}(X_s,y)} - c\frac{s'_h(X_s)}{1-cs(X_s)}\chf_{[X_s\leq y]}+c\frac{s'_h(X_s)}{1+cs(X_s)}\chf_{[X_s> y]}\right\}, \quad t <\zeta,
		\ee
		where $\zeta:=\inf\{t: X_{t-}\in \{l,r\}\}$.
		\item The regular diffusion defined by (\ref{e:AOtt}) has scale function
		\be \label{e:AOttscale}
		\tilde{s}(x):=\frac{1+c(s_h(x)+|s_h(x)|)}{2(1+c|s_h(x)|)},
		\ee
		and speed measure
		\[
		\tilde{m}(dx)=\frac{4(1+c|s_h(x)|)^2}{c\sigma^2(x)s'_r(x)}dx.
		\]
		$\tilde{P}^x(X_{\zeta}=r)=\tilde{s}(x)=1-\tilde{P}^x(X_{\zeta}=l)$, where $\tilde{P}^x$ denotes the law of (\ref{e:ttforr}). 
		\item For any $F\in \cF_t$ the following absolute continuity relationship holds.
		\be \label{e:AOttAC}
		\tilde{P}^x(F, \zeta>t)=\frac{E^{h,x}\left[\chf_F \left(1+\frac{u^{\lambda}(y,y)}{2}|s_h(X_t)|\right)\exp\left(-\frac{L^y_t}{2 u^{\lambda}(y,y)}\right)\right]}{1+c|s_h(x)|}.
		\ee
		In particular, for any nonnegative continuous function $g$ on $I$ and stopping time $\tau$,
		\be \label{e:osid}
		E^x\left[e^{-\lambda \tau}g(X_{\tau})\chf_{[\tau< \zeta]}\right]=u^{\lambda}(x,y)\big(1+c|s_h(x)|\big)\tilde{E}^x\left[\frac{g(X_{\tau})}{u^{\lambda}(X_{\tau},y)\left(1+c|s_h(X_\tau)|\right)}\chf_{[\tau< \zeta]}\right].
		\ee
	\end{enumerate}
\end{proposition}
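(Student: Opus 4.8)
The plan is to recognise the diffusion defined by (\ref{e:AOtt}) as the composition of two transforms already analysed: first the recurrent transform of Proposition \ref{p:AOtr} applied to $X$ (which, being on natural scale, has $b\equiv 0$), and then the transform of Proposition \ref{p:ttforr} applied to the resulting recurrent diffusion. I write $R^{h,\cdot}$ for the law of the recurrent transform of Proposition \ref{p:AOtr} with $\alpha=\lambda$; this is a positively recurrent regular diffusion with diffusion coefficient $\sigma$ whose scale function may be taken to be $s_h$ of (\ref{e:A0sr}), so that $s_h(y)=0$ and, being a recurrent transform, $-s_h(l+)=s_h(r-)=\infty$. The unique zero of this scale is $y^*=y$.

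First I would verify that $R^{h,\cdot}$ satisfies Assumption \ref{a:reg}, so that Proposition \ref{p:ttforr} is applicable: its diffusion coefficient is $\sigma$ and its drift is $\sigma^2(x)u^{\lambda}_x(x,y)/u^{\lambda}(x,y)$, whose ratio to $\sigma^2$ is locally bounded away from $y$ and has only a bounded jump at $y$, so the Engelbert--Schmidt condition (\ref{e:ESR}) holds, while regularity is inherited through local equivalence with $X$. Taking $c=u^{\lambda}(y,y)/2$, Proposition \ref{p:ttforr} applied to $R^{h,\cdot}$ then yields Parts (1) and (2) at once: the drift correction $\mp c\,s_h'(x)/(1\mp c s_h(x))$ added to the drift of (\ref{e:AOtr}) reproduces exactly (\ref{e:AOtt}), the scale function is $\tilde s$ of (\ref{e:AOttscale}), the speed measure is the asserted one, and the boundary probabilities $\tilde P^x(X_\zeta=r)=\tilde s(x)$ follow since $s_h(x)\rar+\infty$ (resp. $-\infty$) forces $\tilde s(x)\rar 1$ (resp. $0$) as $x\rar r$ (resp. $x\rar l$).

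For Part (3) I would first specialise the absolute continuity (\ref{e:ttforrAC}) of Proposition \ref{p:ttforr} to $R^{h,\cdot}$. Its local martingale is $N_t=(1+c|s_h(X_t)|)\exp(-c\,s_h'(y)L^y_t)$, and since $s_h'(y)=1/(u^{\lambda}(y,y))^2$ one has $c\,s_h'(y)=1/(2u^{\lambda}(y,y))$; substituting gives precisely (\ref{e:AOttAC}), the local time $L^y$ being unambiguous because the successive passages are locally equivalent and preserve the diffusion coefficient $\sigma$. To reach (\ref{e:osid}) for an arbitrary stopping time $\tau$ I would combine this with Lemma \ref{l:AO}: with $\Psi=g(X_\tau)/[u^{\lambda}(X_\tau,y)(1+c|s_h(X_\tau)|)]$ the product $N_\tau\Psi$ collapses to $g(X_\tau)u^{\lambda}(X_\tau,y)^{-1}\exp(-L^y_\tau/2u^{\lambda}(y,y))$, so multiplying the stopping-time form of (\ref{e:AOttAC}) through by $u^{\lambda}(x,y)(1+c|s_h(x)|)$ turns its right-hand side into the right-hand side of Lemma \ref{l:AO}, which equals $E^x[e^{-\lambda\tau}g(X_\tau)\chf_{[\tau<\zeta]}]$.

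The main obstacle is the passage from the fixed-time absolute continuity, which Proposition \ref{p:ttforr} supplies only for $F\in\cF_t$, to the stopping-time version invoked above; this is delicate because $N$ is in general only a local martingale (a true martingale precisely when $s_h(X)$, equivalently $s(X)$, is one). I would resolve it exactly as in the proof of Proposition \ref{p:ttforr}: localise along the exit times $T_{a,b}=\inf\{t:X_t\notin(a,b)\}$, on which $N^{\tau\wedge T_{a,b}}$ is a bounded martingale so that the identity is exact, and then let $a\rar l$ and $b\rar r$. Since $T_{a,b}\uparrow\zeta$ and $g\geq 0$, monotone (dominated) convergence delivers the identity on $\{\tau<\zeta\}$, with the indicator $\chf_{[\tau<\infty]}$ on the recurrent side matching $\chf_{[\tau<\zeta]}$ on the $\tilde P^x$ side because $R^{h,x}(\zeta=\infty)=1$.
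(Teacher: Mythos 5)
Your proposal is correct and coincides with the paper's own route: the paper derives Proposition \ref{p:AOtt} precisely by applying Proposition \ref{p:ttforr} to the recurrent transform of Proposition \ref{p:AOtr} (with $c=u^{\lambda}(y,y)/2$, so that $c\,s_h'(y)=1/(2u^{\lambda}(y,y))$) and combining with Lemma \ref{l:AO}, which is exactly your composition. Your additional details — checking Assumption \ref{a:reg} for the intermediate diffusion and extending the fixed-time relation (\ref{e:AOttAC}) to stopping times by localising along $T_{a,b}$ and passing to the limit — correctly fill in what the paper leaves implicit in calling the result immediate.
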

The identity (\ref{e:osid}) together with Assumption \ref{a:OS} allows us to solve (\ref{e:OSP}), which is the content of the next theorem whose proof is delegated to the Appendix.
\begin{theorem} \label{t:OS}
	Let $X$ be a regular diffusion on natural scale satisfying Assumption \ref{a:reg}. Consider  a nonnegative continuous function $g$  on $I$ satisfying Assumption \ref{a:OS}. Let $\tilde{s}$ be as in (\ref{e:AOttscale}) and $G$ be the smallest concave majorant on $(\tilde{s}(l),\tilde{s}(r))$ of the function
	\[
	\hat{g}(x):= \frac{g(\tilde{s}^{-1}(x))}{u^{\lambda}(\tilde{s}^{-1}(x),y)\left(1+\frac{u^{\lambda}(y,y)}{2}|s_h(\tilde{s}^{-1}(x))|\right)},
	\]
	and define
	\[
	\Gamma:=\{x \in (\tilde{s}(l),\tilde{s}(r)):\hat{g}(x)\geq G(x)\}.
\]
Then, 
\[
V(x)=u^{\lambda}(x,y)\Big(1+\frac{u^{\lambda}(y,y)}{2}|s_h(x)|\Big)G(\tilde{s}(x))<\infty.
\]
Moreover, the optimal stopping time for (\ref{e:OSP}) is
\[
\tau^*:=\inf\{t\geq 0:\tilde{s}(X_t) \in \Gamma\}.
\]
\end{theorem}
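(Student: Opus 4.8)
The plan is to use the two successive path transformations already at our disposal to strip off both the discount factor and the local-time term, turning (\ref{e:OSP}) into a discount-free optimal stopping problem for a transient diffusion in natural scale, where Dynkin's smallest-concave-majorant theorem applies directly. Fix $y\in(l,r)$, set $c=u^{\lambda}(y,y)/2$, and abbreviate $\bar g(z):=g(z)/\big(u^{\lambda}(z,y)(1+c|s_h(z)|)\big)=\hat g(\tilde s(z))$. For every stopping time $\tau\le\zeta$, the identity (\ref{e:osid}) of Proposition \ref{p:AOtt}, obtained from Lemma \ref{l:AO} and the transform of Proposition \ref{p:AOtr}, gives
\[
E^x\big[e^{-\lambda\tau}g(X_\tau)\chf_{[\tau<\zeta]}\big]=u^{\lambda}(x,y)\big(1+c|s_h(x)|\big)\,\tilde E^x\big[\bar g(X_\tau)\chf_{[\tau<\zeta]}\big].
\]
Since the prefactor $u^{\lambda}(x,y)(1+c|s_h(x)|)$ is finite and depends only on the initial point $x$, it factors out of the supremum, so that
\[
V(x)=u^{\lambda}(x,y)\big(1+c|s_h(x)|\big)\sup_{\tau\le\zeta}\tilde E^x\big[\hat g(\tilde s(X_\tau))\chf_{[\tau<\zeta]}\big].
\]
Assumption \ref{a:OS} asserts precisely that $\bar g$, hence $\hat g$, is bounded on its domain, and this is what delivers both finiteness and tractability.

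Next I would pass to the natural-scale coordinate $\xi_t:=\tilde s(X_t)$. Because the underlying recurrent transform satisfies $-s_h(l+)=s_h(r-)=\infty$, formula (\ref{e:AOttscale}) shows that $\tilde s$ maps $(l,r)$ onto $(0,1)$ with $\tilde s(l+)=0$ and $\tilde s(r-)=1$; by Proposition \ref{p:AOtt} the transformed diffusion has scale function $\tilde s$, so $\xi$ is a bounded $\tilde P^x$-martingale converging to $\xi_\zeta=\tilde s(X_\zeta)\in\{0,1\}$. The inner problem thus becomes the undiscounted stopping problem $\sup_\tau\tilde E^x[\hat g(\xi_\tau)]$ for a diffusion in natural scale, whose value is $G(\xi_0)=G(\tilde s(x))$ with $G$ the smallest concave majorant of $\hat g$ on $(\tilde s(l),\tilde s(r))=(0,1)$, and whose smallest optimal rule is first entry into the contact set $\Gamma=\{\hat g\ge G\}$, i.e. $\tau^*=\inf\{t\ge 0:\tilde s(X_t)\in\Gamma\}$ (Dynkin \cite{Dyn}; see also \cite{PS, DK}). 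Boundedness of $\hat g$ forces $G$ to be bounded, whence $G(\tilde s(x))<\infty$ and so $V(x)<\infty$; reassembling the factors yields the stated formula for $V$.

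The main obstacle will be the careful treatment of the lifetime $\zeta$ — both the $\limsup$ convention at $\tau=\zeta=\infty$ in the original problem and the indicator $\chf_{[\tau<\zeta]}$ carried by (\ref{e:osid}). I would make the Dynkin step rigorous through the supermartingale argument underlying it: since $\hat g\ge 0$ is bounded, its smallest concave majorant $G$ is nonnegative, bounded and concave on $(0,1)$, so $G(\xi)$ is a bounded $\tilde P^x$-supermartingale, and optional stopping gives $\tilde E^x[\hat g(\xi_\tau)\chf_{[\tau<\zeta]}]\le\tilde E^x[G(\xi_\tau)]\le G(\tilde s(x))$ for every $\tau$, whence the inner value is at most $G(\tilde s(x))$. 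Conversely, on $\Gamma$ one has $G=\hat g$, while on each component of the continuation set $\{\hat g<G\}$ the majorant $G$ is affine, so $G(\xi_{\cdot\wedge\tau^*})$ is a martingale and $\tau^*$ attains the bound. Finally, I would control the excluded boundary event $\{\tau=\zeta\}$ by the localisation $\tau\wedge T_{a_n,b_n}$ with $a_n\downarrow l$, $b_n\uparrow r$ and $T_{a_n,b_n}:=\inf\{t:X_t\notin(a_n,b_n)\}$, on which the absolute-continuity relation is exact, and then pass to the limit by dominated convergence, using boundedness of $\hat g$ and transience of $X$ under $\tilde P^x$; this confirms that no payoff mass is lost at $\zeta$ and that the original value coincides with the transformed one, completing the proof.
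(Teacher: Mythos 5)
Your overall strategy is the same as the paper's: use (\ref{e:osid}) to strip the discount and the local time, pass to the coordinate $\tilde{s}(X)$, and invoke the Dynkin/Shiryaev smallest-concave-majorant characterisation. The reduction formula, the finiteness of $V$ from boundedness of $\hat g$, and the upper bound $\tilde E^x[\hat g(\xi_\tau)\chf_{[\tau<\zeta]}]\le G(\tilde s(x))$ via Jensen and optional stopping are all fine. However, there are two genuine gaps, and they are exactly where the paper spends most of its effort (its Steps 1 and 3--6).

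First, the identity $V(x)=u^{\lambda}(x,y)(1+c|s_h(x)|)\sup_{\tau}\tilde E^x[\bar g(X_\tau)\chf_{[\tau<\zeta]}]$ does not follow by ``factoring out the prefactor'': relation (\ref{e:osid}) equates only the $\chf_{[\tau<\zeta]}$ parts, whereas the original payoff $E^x[e^{-\lambda\tau}g(X_\tau)]$ also collects mass on $\{\tau=\zeta\}$ --- when $\zeta<\infty$ the process sits at an accessible absorbing boundary where $g$ need not vanish, and when $\tau=\zeta=\infty$ the $\limsup$ convention applies. A priori $V(x)$ could exceed the right-hand side. Closing this gap is precisely the content of the paper's Step 1 (showing $\hat g=G$ at accessible boundaries, via $u^\lambda(z,y)\to 0$ and L'Hospital), Step 3 (constructing $v=u^\lambda(\cdot,y)(1+c|s_h|)G\circ\tilde s$, showing $v\ge g$ with equality at accessible boundaries, and that $e^{-\lambda t}v(X_t)$ is a $P^x$-supermartingale), and Step 5 (localisation plus \emph{Fatou}, not dominated convergence --- under $P^x$ there is no integrable dominating function since $g$ may be unbounded, and on $\{\tau=\zeta=\infty\}$ one also needs the a.s. convergence of the nonnegative supermartingale $e^{-\lambda t}v(X_t)$ to dominate the $\limsup$ in the convention). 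Your proposal never establishes the $P^x$-supermartingale property of $e^{-\lambda t}v(X_t)$; all your supermartingale arguments live under $\tilde P^x$.

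Second, your argument for optimality of $\tau^*$ --- ``$G$ is affine on each component of $\{\hat g<G\}$, hence $G(\xi_{\cdot\wedge\tau^*})$ is a martingale and $\tau^*$ attains the bound'' --- breaks down. The bounded martingale $G(\xi_{\cdot\wedge\tau^*})$ gives $G(\tilde s(x))=\tilde E^x\big[\hat g(\xi_{\tau^*})\chf_{[\tau^*<\zeta]}\big]+\tilde E^x\big[\lim_{t\to\zeta}G(\xi_t)\,\chf_{[\tau^*\ge\zeta]}\big]$, and the second term need not vanish: under $\tilde P^x$ the transformed diffusion is transient and can converge to a boundary before ever entering $\Gamma$ (e.g.\ when the contact set touches only the boundary, so that $\Gamma$ is small or even empty), and there the boundary limit of $G$ is strictly positive while the indicator payoff is zero. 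So $\tau^*$ does \emph{not} in general attain the supremum of the transformed problem with the indicator; it is optimal only for the \emph{original} problem, where stopping at an accessible boundary at time $\zeta<\infty$ still collects $e^{-\lambda\zeta}g(X_\zeta)$. Proving this requires relating the escaped boundary mass back to the original payoff, which the paper does through the $\eps$-optimal entry times $\tau^*_\eps$ of Shiryaev's theorem: it shows $P^x(\tau^*_\eps<\zeta)=1$ under \emph{both} measures (Step 3, using $\hat g=G$ at accessible boundaries), derives $E^x[e^{-\lambda\tau^*_\eps}v(X_{\tau^*_\eps})]=v(x)$, passes to $\tau^*$ by localisation and monotone/dominated convergence (Step 4), and finally uses the uniform integrability of $\big(e^{-\lambda\tau^*_\eps}g(X_{\tau^*_\eps})\big)_{\eps>0}$ to conclude optimality (Step 6). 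None of this machinery is replaceable by the one-line martingale argument you give.
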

An immediate corollary to the above theorem is the following converse to the statement in Proposition \ref{p:Vfinite}. 
\begin{corollary} \label{c:Vfinite}
	Let $x \in (l,r)$ be fixed and consider the value function, $V$, defined in (\ref{e:OSP}). $V(x)$ is finite if and only if
	the mapping in (\ref{e:nc}) is bounded.
\end{corollary}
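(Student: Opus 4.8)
The plan is to obtain the corollary as a packaging of Proposition~\ref{p:Vfinite}, Lemma~\ref{l:nc} and Theorem~\ref{t:OS}, since each implication has effectively been prepared by these results. For the sufficiency ($\Leftarrow$) I would simply assume that the map in (\ref{e:nc}) is bounded, observe that this is exactly Assumption~\ref{a:OS}, and invoke Theorem~\ref{t:OS}. That theorem then delivers the closed form $V(x)=u^{\lambda}(x,y)\big(1+\frac{u^{\lambda}(y,y)}{2}|s_h(x)|\big)G(\tilde{s}(x))$, whose finiteness is asserted as part of its conclusion; hence $V(x)<\infty$ and nothing more is required.

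For the necessity ($\Rightarrow$) I would assume $V(x)<\infty$ and apply Proposition~\ref{p:Vfinite} to obtain the one-sided bounds (\ref{e:NCopt}). The key step is to convert these into boundedness of the map (\ref{e:nc0}). Here I would use that the recurrent transform of Proposition~\ref{p:AOtr} is recurrent, so that, with the normalisation $s_h(y)=0$ from (\ref{e:A0sr}), one has $s_h(l+)=-\infty$ and $s_h(r-)=+\infty$ while $s_h$ is increasing. Near $l$ the value $s_h(a)$ is negative, and since $g\geq 0$ and $u^{\lambda}>0$ the ratio $g(a)/(u^{\lambda}(a,x)s_h(a))$ is nonpositive; multiplying the bound $\liminf_{a\to l}g(a)/(u^{\lambda}(a,x)s_h(a))>-\infty$ through by the negative quantity $s_h(a)$ reverses the inequality and yields $g(a)\leq M\,u^{\lambda}(a,x)|s_h(a)|$ for some finite $M$ and all $a$ near $l$, whence $g(a)/(u^{\lambda}(a,x)(1+|s_h(a)|))\leq M$. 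Symmetrically, near $r$ one has $s_h(b)>0$, and the limsup bound in (\ref{e:NCopt}) gives a bound on $g(b)/(u^{\lambda}(b,x)(1+s_h(b)))$ without any sign reversal. On any compact subinterval of $(l,r)$ the map (\ref{e:nc0}) is continuous, because $g$ is continuous, $u^{\lambda}(\cdot,x)$ is continuous and strictly positive, and $s_h$ is continuous, so it is bounded there. Patching the two boundary estimates to this interior bound shows (\ref{e:nc0}) is bounded, and Lemma~\ref{l:nc} upgrades this to boundedness of (\ref{e:nc}).

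The result is therefore essentially immediate once the auxiliary results are in place, and I expect no substantial obstacle. The only point demanding care is the sign bookkeeping in the necessity step, specifically that multiplication by $s_h$ reverses the inequality near $l$ (where $s_h<0$) but not near $r$ (where $s_h>0$), together with the routine patching of the two boundary estimates to the interior continuity bound. No new probabilistic input beyond the cited results is needed.
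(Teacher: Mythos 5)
Your proof is correct and follows essentially the same route as the paper: sufficiency by invoking Theorem \ref{t:OS} under Assumption \ref{a:OS}, and necessity by combining Proposition \ref{p:Vfinite} with Lemma \ref{l:nc}. The only difference is that you spell out the sign bookkeeping converting the liminf/limsup conditions (\ref{e:NCopt}) into boundedness of the map (\ref{e:nc0}), a step the paper asserts without detail in the discussion following Proposition \ref{p:Vfinite}; your version of that step is sound.
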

\begin{proof}
	The necessity has already been proved in Proposition \ref{p:Vfinite} in view of Lemma \ref{l:nc}.  Sufficiency follows from Theorem \ref{t:OS}. 
\end{proof}
\begin{remark}
	Note that the sole purpose of the assumption that $X$ is on natural scale in the above theorem is  to simplify the exposition. If $X$ is not on natural scale, then one can define $Y=s(X)$, which will be on natural scale, and consider instead the problem $\sup_{\tau}E^x[e^{-\lambda \tau}g(s^{-1}(Y_{\tau}))]$.
\end{remark}
\section{Conclusion} \label{s:conc}
We have introduced a new class of path transformations for one-dimensional regular diffusions aimed at modifying their behaviour towards recurrence. As a first application these transformations are used to compute the distribution of the first exit time from an interval for any diffusion. These transforms turned out to be instrumental in understanding strict local martingales better as well.  In Theorem \ref{t:bsunique} we give a novel characterisation of the Black-Scholes valuation formula in terms of the {\em unique} solution of an alternative Cauchy problem when the stock price is a local martingale and thus resolve the longstanding issue with the numerical computation of the option price when the option payoff is unbounded with linear growth. Finally, using the path transformations developed in this paper, we propose a unified framework for solving explicitly the optimal stopping problem for one-dimensional diffusions with discounting in Section \ref{s:OS}. Following Remark \ref{r:simulation} application of recurrent transformations to study the discrete Euler schemes for killed diffusion is left for future research.
\bibliographystyle{siam}
\bibliography{ref}
\appendix

\section{Proof of Theorem \ref{t:rectr}}
\begin{enumerate}[leftmargin=*]
		\item To show the first assertion it suffices to show that $h'$ equals a left-continuous function Lebesgue a.e. since the left derivative is  defined uniquely only outside  a Lebesgue null set. However, since $h'$ is assumed to be of finite variation, there exist non-decreasing functions $g^{+}$ and $g^{-}$ such that $h'=g^+ -g^-$. It follows from Exercise 12 in Chap. 7 of \cite{Rudin} that $g^+$ and $g^-$ are left-continuous a.e.. Thus, $h'$ is equal to a left-continuous function a.e..  
		
		Since $h'$ is of finite variation and can be taken to be left continuous, Exercise 13 in Chap. 7 of \cite{Rudin} shows that $h'$ can be viewed as a signed Borel measure on $(l,r)$. Then, it follows  from the Lebesgue decomposition theorem (Theorem C in Section 32 of \cite{Halmos}) and the Radon-Nikodym theorem (Theorem B in \cite{Halmos}) that the measure $dh'(x)$ admits the stated decomposition. That  $h''$ can be taken Borel measure follows from the fact that every Lebesgue measurable function is equal to a Borel measurable function a.e..
		
		\item Observe that, in view of occupation times formula, the integral in (\ref{e:integrability})  equals on $[t<\zeta]$
		\bean
		&&\int_l^r \left|\frac{\sigma^2(x)}{2}h''(x)+ b(x)h'(x)\right|\frac{L^x_{t }}{\sigma^2(x)}dx+\int_l^r \frac{L^x_{t }}{2}	|n(dx)|\\
		&=&\int_l^r \left|\half h''(x)+ \frac{b(x)}{\sigma^2(x)}h'(x)\right|L^x_{t }dx+\int_l^r \frac{L^x_{t }}{2}	|n(dx)|
		\eean
		Due to the continuity of $X$, on $[t<\zeta]$ and on almost every path $L^x_t$ would be equal to $0$ for all $x$ outside a compact interval in $(l,r)$, which is determined by the maximum and the minimum of $X$ on $[0,t]$. Thus, due to the continuity of $x \mapsto L^x_t$, it suffices to check 
		\be \label{e:MFv}
		\int_K \left|\half h''(x)+ \frac{b(x)}{\sigma^2(x)}h'(x)\right|dx+\int_K |n(dx)|<\infty
		\ee
		for an arbitrary compact $K$ contained in $(l,r)$. First note that
		\[
		\int_K \left|h''(x)\right|dx + \int_K |n(dx)|<\infty
		\]
		since $h'$ is of finite variation. 
		
		Moreover,
		\[
		\int_K \left|\frac{b(x)}{\sigma^2(x)}h'(x)\right|dx=C + \int_K \left(\int_c^y \left|\frac{2b(x)}{\sigma^2(x)}\right|dx\right)|dh'(y)|,
		\]
		for some $C<\infty$ and $c \in K$ due to the finiteness of $h'$ and $\int_c^y \left|\frac{2b(x)}{\sigma^2(x)}\right|dx$ at the boundary of $K$. However, the integral in the above representation is finite since $dh'$ is of finite variation and  $\int_c^y \left|\frac{2b(x)}{\sigma^2(x)}\right|dx$ is bounded in $K$. This completes the proof that (\ref{e:MFv}) holds for an arbitrary compact set $K$, which in turn yields the claim.
		\item It follows from the previous part that $\Lambda(h)$ is of finite variation. Since $(h(X_s)_{s \leq t}$ is away from $0$, path by path for $t<\zeta$, it immediately follows that $M$ is of finite variation, too.  
		
		Since $h$ can be considered as a difference of convex functions, it follows from It\^{o}-Tanaka formula that on $[t<\zeta]$
		\bean
		h(X_t)&=&h(x)+ \int_0^t h'(X_s)dX_s +\half\int_{(l,r)}L^x_t \left\{h''(x)dx+n(dx)\right\}\\
		&=&\int_0^t h'(X_s)dX_s +\half\int_0^t \sigma^2(X_s)h''(X_s)ds+ \half\int_l^r L^x_t n(dx).
		\eean
		Thus, a simple application of integration by parts formula yields
		\[
		h(X_t)M_t=h(x)+ \int_0^t h'(X_s)M_s\sigma(X_s)dB_s, \qquad t <\zeta,
		\]
		proving the local martingale property for $h(X)M$. In particular, $h(X)M$ is a continuous non-negative supermartingale with an integrable limit as $t \rar \zeta$. 
		
		Finally, due to the hypotheses on $s_h$ it follows from Theorem 5.5.15 in \cite{KS} that there exists a unique weak solution to (\ref{e:sderectr}). Moreover, the solution is recurrent by Part a) of Proposition 5.5.22 in \cite{KS}.
		\item Since $-s_h(l+)=s_h(r-)=\infty$, it follows that $h(l)=0$ (resp. $h(r)=0$) if $s(l)=0$ (resp. $s(r)=0$). That is, $h$ vanishes at the accessible boundaries and, thus, $h(X_{\zeta})=0$ on $[\zeta<\infty]$. Consequently,  $h(X_t)M_t=h(X_t)M_t\chf_{[t<\zeta]}$ since $M_t>0$ on $[t <\zeta]$ except on a $P^x$-null set by (\ref{e:integrability}). Moreover, $h$ must be strictly positive on $(l,r)$ in order for $s_h$ to be finite on $(l,r)$. These in turn yield the desired identity that $\inf\{t>0:h(X_t)M_t=0\}=\zeta, \,P^x$-a.s. \item In view of the previous part $\frac{h(X_t)}{h(x)}M_t$  is a supermartingale multiplicative functional satisfying Hypothesis 62.9 in \cite{GTMP}, that is, a supermartingale vanishing on $[\zeta,\infty)$. Then (\ref{e:ACrectr}) follows directly from Theorem 62.19 in \cite{GTMP} since $R^{h,x}(\zeta=\infty)=1$. Note that the space $\Om$ is projective in the terminology of Section 62 of \cite{GTMP} since it is the path space.  
		
		To show the martingale property observe that in view of (\ref{e:ACrectr}) and $R^{h,x}(\zeta=\infty)=1$,
		\[
		1=R^{h,x}(t<\zeta)=R^{h,x}(t<\infty)=\frac{1}{h(x)}E^x[h(X_t)M_t],
		\]
		yielding the martingale property of $h(X)M$ under $P^x$.
		\item  By the virtue of the monotone convergence theorem
		\[
		E^{h,x}\left[\frac{\chf_F}{h(X_T)M_T}\right]=\lim_{n \rar \infty}E^{h,x}\left[\chf_F\Big(\frac{1}{h(X_T)M_T}\wedge n\Big)\right].
		\]
		Thus, employing (\ref{e:ACrectr})  we arrive at
		\bean
		E^{h,x}\left[\frac{\chf_F}{h(X_T)M_T}\right]&=&\lim_{n \rar \infty} E^x\Big[\chf_F \big(\chf_{[h(X_T)M_T> \frac{1}{n}]}+ nh(X_T)M_T\chf_{[h(X_T)M_T\leq \frac{1}{n}]}\big)\Big]\\
		&=& P^x(F, h(X_T)M_T>0)+  \lim_{n \rar \infty} E^x\big[\chf_F nh(X_T)M_T\chf_{[T<\zeta]}\chf_{[h(X_T)M_T\leq \frac{1}{n}]}\big],
		\eean
		where the second line follows from the dominated convergence theorem and that $h(X_T)M_T=h(X_T)M_T\chf_{[T<\zeta]}, \,P^x$-a.s.. Moreover,
		\[
		E^x\big[\chf_F nh(X_T)M_T\chf_{[T<\zeta]}\chf_{[h(X_T)M_T\leq \frac{1}{n}]}\big]\leq P^x\Big(T<\zeta,h(X_T)M_T \leq \frac{1}{n}\Big),
		\]
		which converges to $0$ as $n \rar \infty$ since $h(X_T)M_T>0$ on $[T<\zeta]$ except on a $P^x$-null set by the previous part. Thus,
		\[
		E^{h,x}\left[\frac{\chf_F}{h(X_T)M_T}\right]=P^x(F, h(X_T)M_T>0)=P^x(F, T<\zeta).
		\]
		This completes the proof.
		
	\end{enumerate}

\section{Proof of Theorem \ref{t:rtrpot}}
	\begin{enumerate}[leftmargin=*]
		\item It follows from a simple differentiation of the potential functions in  (\ref{e:psiff})-(\ref{e:psiif}) that the left-derivative of $u(\cdot, y)$, i.e. $u_x(\cdot,y)$, at $x \in (l,r)$ is bounded by $s'(x)$ uniformly in $y$. Thus, since $\mu$ is a probability measure on $(l,r)$ and $s'$ is continuous under Assumption \ref{a:reg}, the dominated convergence theorem implies the left derivative of $h$ is given by
		\be \label{e:h'potential}
		h'(x)=\int_{(l,r)} u_x(x,y)\mu(dy), \qquad x\in (l,r).
		\ee
		Next, consider a finite subinterval $[a,b]$ of $(l,r)$ and recall that $u_x(\cdot,y)$ is non-decreasing on $(l,y)$ and non-increasing on $(y,r)$. Straightforward computation reveals that  the total variation of  $u_x(\cdot,y)$ on $[a,b]$, denoted by 	$\|u_x(\cdot,y)\|_{TV(a,b)}$, admits
		\[
		\|u_x(\cdot,y)\|_{TV(a,b)}\leq \sup_{x,z \in [a,b]}|s'(x)-s'(z)|\leq K(a,b)<\infty,
		\]
		for some constant $K(a,b)$ by the continuity of $s'$ and the compactness of $[a,b]$. Consequently,
		\[
		\|h'\|_{TV(a,b)}\leq \int_{(l,r)}\|u_x(\cdot,y)\|_{TV(a,b)}\mu(dy)\leq K(a,b),
		\]
		since $\mu((l,r))=1$. Thus, $h'$ is of finite variation. 
		
		Next, let $v(x,y):=u(s^{-1}(x), s^{-1}(y))$ and observe that $v(x,\cdot)$ is concave for each $x \in (l,r)$. Thus,
		\[
		h(s^{-1}(x))=\int_{(l,r)} v(x, s(y))\mu(dy)\leq v\left(x,\int_{(l,r)} s(y)\mu(dy)\right),
		\] 
		by Jensen's inequality. Next observe  that $s(l)<\int_{(l,r)} s(y)\mu(dy)<s(r)$.  Indeed, if $s(l)=-\infty$, $s(l)< \int_{(l,r)}s(y)\mu(dy)$ directly follows from the hypothesis that $\int_{(l,r)} |s(y)|\mu(dy)<\infty$.  If $s(l)=0$, since $s(x)\geq 0$ for all $x\geq l$, we have $\int_{(l,r)}s(y)\mu(dy)\geq 0$. In fact, $\int_{(l,r)}s(y)\mu(dy)>0$ since, otherwise, $s=0$, $\mu$-a.s.. However,  $\{x:s(x)=0\}=\{l\}$ as $s$ is strictly increasing under Assumption \ref{a:reg}. Thus, $\int_{(l,r)} s(y)\mu(dy)>0$ due to the hypothesis that $\mu$ does not charge $\{l\}$. Similarly, we can show $\int_{(l,r)}s(y)\mu(dy)<s(r)$.  Moreover, by the continuity of $s$, there exists  some $y^* \in I$  such that $\int_{(l,r)} s(y)\mu(dy)=s(y^*)$. 
		
		Therefore, $h(x) \leq v(s(x), s(y^*))=u(x,y^*)$. This in turn implies
		\be \label{e:shlbl}
		\int_l^{y^*}\frac{s'(y)}{h^2(y)}dy\geq |s_u(l+)|,
		\ee
		 where
		\[
		s_u(x)=\int_{y^*}^x \frac{s'(z)}{(u(z,y^*))^2}dz, \qquad x\in (l,r).
		\]
		Suppose, first, that $s(l)=1-s(r)=0$. Then, for $x <y^*$,
		\[
		s_u(x)=\frac{1}{(1-s(y^*))^2}\int_{y^*}^x\frac{s'(z)}{s^2(z)}dz=\frac{1}{(1-s(y^*))^2}\left(\frac{1}{s(y^*)}-\frac{1}{s(x)}\right),
		\]
		which in particular shows that $\lim_{x \rar l}s_u(x)=-\infty$. Similarly, for $x >y^*$,
		\[
		s_u(x)=\frac{1}{s^2(y^*))}\int_{y^*}^x\frac{s'(z)}{(1-s(z))^2}dz=\frac{1}{s^2(y^*)}\left(\frac{1}{1-s(x)}-\frac{1}{1-s(y^*)}\right),
		\]
		and, thus, $s_u(r-)=\infty$. The other cases are handled  the same way to show $-s_u(l+)=s_u(r-)=\infty$. This in turn yields in view of (\ref{e:shlbl}) that $s_h(l+)=-\int_l^{y^*} \frac{s'(y)}{h^2(y)}dy=-\infty$. Similarly, $s_h(r-)\geq s_u(r-)=\infty$.
		
		Thus, $h$ satisfies the conditions of Theorem \ref{t:rectr}. In particular,  $h'$ can be taken left-continuous with the Lebesgue decomposition $dh'(x)=h''(x)dx+ n(dx)$, where $n$ is a locally finite signed measure that is singular with respect to the Lebesgue measure. Moreover, $(h,M)$ is a recurrent transform where
		\[
		M_t=\exp\left(-\int_0^t \frac{\tbA h(X_s)}{h(X_s)}ds -\int_0^t \frac{1}{h(X_s)}d\Lambda_s(h)\right),
		\]
		where $\Lambda_t(h)=\int_{(l,r)} \frac{L^x_t}{2}n(dx)$. 
		
		On the other hand,  the occupation times formula applied to $\int_0^t \frac{\tbA h(X_s)}{h(X_s)}ds$ yields 
		\[
		M_t=\exp\bigg(-\int_{(l,r)} \frac{L^x_t}{h(x)}\Big(\half dh'(x)+ \frac{b(x)h'(x)}{\sigma^2(x)}dx\Big)\bigg).
		\] 
		Thus, we will be done if $\half dh'(x)+ \frac{b(x)h'(x)}{\sigma^2(x)}dx=-\half s'(x)\mu(dx)$  on $(l,r)$.
		Note that this will follow if  for any continuous $f$ with a compact support in $(l,r)$, we establish
		\be \label{e:dhipot}
		\half  \int_{(l,r)}f(x)dh'(x)+ \int_l^r f(x)\frac{b(x)h'(x)}{\sigma^2(x)}dx=-\half\int_{(l,r)}f(x)s'(x)\mu(dx).
		\ee
		First note that $u_x(\cdot,y)$ is differentiable everywhere except at $y$ under Assumption \ref{a:reg}. Using this observation and the fact that the jump in the (left-continuous) left-derivative $u_x(x,y)$ at $x=y$ equals $u_x(y+,y)-u_x(y,y)=-s'(y)$, we deduce
		\[
		du_x(x,y)=u_{xx}(x,y)dx - s'(y)\eps_y(dx),
		\]
		for some function $u_{xx}(\cdot,y)$ that is a.e. uniquely determined  by the second derivative of $u(\cdot,y)$, which exists  at each $x \neq y$. Moreover, 
		\be \label{e:tbAu}
		\half \sigma^2(x)u_{xx}(x,y)+ b(x)u_x(x,y)=0, \qquad \forall x \neq y.
		\ee
		Next, the second integral on the left hand side of (\ref{e:dhipot}) equals
		\bean
		&&\int_l^r f(x)\frac{b(x)}{\sigma^2(x)}\left(\int_{(l,r)}u_x(x,y)\mu(dy)\right)dx= \int_{(l,r)}\mu(dy)\int_l^r f(x)\frac{b(x)}{\sigma^2(x)}u_x(x,y)dx\\
		&=& -\half\int_{(l,r)}\mu(dy)\left(\int_l^yf(x)u_{xx}(x,y)dx+\int_y^r f(x)u_{xx}(x,y)\right)\\
		&=&-\half\int_{(l,r)}\mu(dy)\left(f(y)(u_x(y,y)-u_x(y+,y))-\int_l^rf'(x)u_{x}(x,y)dx\right)\\
		&=&-\half \left(\int_{(l,r)}s'(y)f(y)\mu(dy) -\int_l^r f'(x)\int_{(l,r)}u_x(x,y)\mu(dy)dx\right),
		\eean
		where the first equality follows from Fubini's theorem since $|u_x(x,y)|\leq s'(x)$ as observed before and $f$ has compact support. The second line above is a consequence of (\ref{e:tbAu}) and the third line is a straightforward integration by parts. The last line is a consequence of $u_x(y+,y)-u_x(y,y)=-s'(y)$ and another application of Fubini's theorem due to the aforementioned bound on $u_x$. Since $\int_l^r f'(x)\int_{(l,r)}u_x(x,y)\mu(dy)dx=\int_l^r f'(x)h'(x)dx$, (\ref{e:dhipot}) follows from a simple integration by parts. 
		\item This is a restatement of the final part of Theorem \ref{t:rectr} in this special case.
	\end{enumerate}

\section{Proof of Theorem \ref{t:prctr}}
	\begin{enumerate}[leftmargin=*]
		\item As in the proof of Theorem \ref{t:rtrpot}, one can differentiate from the left under the integral sign since $\int_{(l,r)} u^{\alpha}(y,y)\mu(dy)<\infty$ and
		\[
		u^{\alpha}_x(x,y)\leq \left(\frac{\psi_{\alpha}'(x)}{\psi_{\alpha}(x)}+\frac{\phi_{\alpha}'(x)}{\phi_{\alpha}(x)}\right)u^{\alpha}(y,y),
		\]
		where $\psi_{\alpha}$ and $\phi_{\alpha}$ are the fundamental solutions as in (\ref{e:ualpha}). The fact that $h'$ is of finite variation can be shown similarly using the representation of (\ref{e:ualpha}) and the continuity properties of the fundamental solutions.
		
		If $X$ is transient, the potential kernel $u$ exists and we have
		\[
		h(x)\leq  \int_{(l,r)} u(x,y)\mu(dy).
		\]
		Thus, $-s_h(l+)=s_h(r-)=\infty$ by Theorem \ref{t:rtrpot}.
		
		Also note that  $u^{\alpha}_x(x,y)$ is differentiable from left at all $x \neq y$ with the left-derivative $u_{xx}^{\alpha}(x,y)$ satisfying $\half \sigma^2 u^{\alpha}_{xx}(x,y)+ b(x)u^{\alpha}_x(x,y)=\alpha u^{\alpha}(x,y)$ for $x \neq y$ (see Paragraphs 10 and 11 in Section II.1 of \cite{BorSal}).
		Moreover, $u^{\alpha}_x(y+,y)-u^{\alpha}_x(y,y)=\frac{\phi'_{\alpha}(y)\psi_{\alpha}(y)-\psi'_{\alpha}(y)\phi_{\alpha}(y)}{w_{\alpha}}=-s'(y)$. 
		Thus, $du^{\alpha}_x(x,y)=u^{\alpha}_{xx}(x,y)dx-s'(y)\eps_y(dx)$, and the same arguments  of the proof of Theorem \ref{t:rtrpot}  can be used to show that $(h,M)$ is a recurrent transform. 
		
		Now, suppose $X$ is recurrent. By applying a scale transformation we may assume without loss of generality that $X$ is on natural scale. This in turn implies $-l=r=\infty$. Using the fact that $u^{\alpha}(x,y)\leq u^{\alpha}(y,y)$ 
		\[
		h(x)\leq \int_{(-\infty,\infty)}u^{\alpha}(y,y)\mu(dy)<\infty,
		\]
		we deduce $\int_c^{\infty}\frac{1}{h^2(x)}dx\geq \int_c^{\infty}\frac{1}{ (\int_{-\infty}^{\infty}u^{\alpha}(y,y)\mu(dy))^2}dx=\infty$. That is, $s_h(r-)=\infty$. Similarly, $s_h(l+)=-\infty$ and that $(h,M)$ is a recurrent transform follows again from the same lines of the proof of Theorem  \ref{t:rtrpot} in view of the aforementioned properties of $u^{\alpha}$.
		
		\item Note that the speed measure of the recurrent transform is given by $h^2(x)m(dx)$. Thus, we need to show that the speed measure is finite 	since the stationary distribution of a one-dimensional diffusion is given by its speed measure when it is finite (see p.37 of \cite{BorSal}).
		
		Using Jensen's inequality and Fubini's theorem we get
		\[
		\int_l^r h^2(x)m(dx)\leq \int_l^r \int_{(l,r)} (u^{\alpha}(x,y))^2 \mu(dy)m(dx)=\int_{(l,r)} \int_l^r (u^{\alpha}(x,y))^2 m(dx)\mu(dy).
		\]
		Moreover,
		\bean
		\int_l^r(u^{\alpha}(x,y))^2m(dx)&=&\int_l^r \int_0^{\infty}\int_0^{\infty}e^{-\alpha(t+s)}p(t,x,y)p(s,x,y)dsdtm(dx)\\&=& \int_0^{\infty}\int_0^{\infty}e^{-\alpha(t+s)}\int_l^r p(t,y,x)p(s,x,y)m(dx)dsdt\\
		&=&\int_0^{\infty}\int_0^{\infty}e^{-\alpha(t+s)}p(t+s,y,y)dsdt\\
		&=&\int_0^{\infty}ue^{-\alpha u} p(u,y,y)du\leq \frac{1}{\eps}\int_0^{\infty}e^{-(\alpha-\eps) u} p(u,y,y)du\\
		&=&\frac{u^{\alpha-\eps}(y,y)}{\eps}.
		\eean
		In above, the first equality follows from (\ref{e:ualpha}), the second is due to the symmetry of the transition density and the Fubini's theorem, while the third is a consequence of Chapman-Kolmogorov identity.
		
		Therefore, $\int_l^r h^2(x)m(dx) \leq \int_{(l,r)}\frac{u^{\alpha-\eps}(y,y)}{\eps}\mu(dy)<\infty$.
	\end{enumerate}

\section{Proof of Theorem \ref{t:bsunique}}
	If $0 \in D^*$, and $X_0=0$, $X_t=0$ for all $t>0$ $P^0$-a.s. since $0$ is an absorbing boundary. Thus, $v(t,0)=E^0[g(X_t)]=E^0[g(0)]=0$ since $g(0)=0$ when $0\in D^*$.
	
	As mentioned in Remark \ref{r:Yashtr}, the process $Y$ can be considered as an $h$-transform of $X$ with $h(x)=x$. Indeed, if $\tau_n:=\inf\{t:X_t \notin (\frac{1}{n},n)\}$, $X^{\tau_n}$ is a bounded martingale and a straightforward application of Girsanov's theorem yields that $X$ is a weak solution of (\ref{e:limitRT}) up to $\tau_n$. Therefore,
	\[
	E^x\left[g(X_t)\chf_{[t<\tau_n]}\right]=E^x\left[\chf_{[t<\tau_n]}\frac{g(X_t)}{X_t}X_t\right]=xQ^x\left[\chf_{[t<\tau_n]}\frac{g(X_t)}{X_t}\right],
	\]
	where $Q^x$ is the unique law of solutions of (\ref{e:limitRT}). Observe that $\tau_n$ converges to the lifetime, $\zeta$, of $X$ under $P^x$ and $Q^x$. Moreover, since $X$ is a positive supermartingale, $P^x(\zeta=\inf\{t:X_t=0\})=1$ while $Q^x(\zeta=\inf\{t:X_{t}=\infty\})=1$ by Proposition \ref{p:Xvshtr} since the scale function of (\ref{e:limitRT}) is $1-1/x$. 
	Thus, the monotone convergence theorem   together with the assumption that   $g(0)=0$ when $0$ is an accessible boundary under $P^x$ yields 
	\be \label{e:vrep}
	xQ^x\left[\chf_{[t<\zeta]}\frac{g(X_t)}{X_t}\right]=E^x\left[g(X_t)\chf_{[t<\zeta]}\right]=E^x\left[g(X_t)\right]=v(t,x).
	\ee
	Thus, $v(t,x)=xw(t,x)$, where $w$ is as defined in (\ref{e:w}), since the law of $Y$ is the same as that of  $X$ under $Q^x$. 
	
	Recall from Theorem \ref{t:ET}  that $v$ satisfies (\ref{e:cauchy}). This automatically implies $w$ satisfy (\ref{e:wpde}) and (\ref{w:ic}). To prove the other properties for $w$ fix an $m>0$ and note that
	\bea
	w(t,x)&=&Q^x\left[\chf_{[t<\zeta]}\chf_{[Y_t>m]}\frac{g(Y_t)}{Y_t}\right]+ Q^x\left[\chf_{[t<\zeta]}\chf_{[Y_t<m]}\frac{g(Y_t)}{Y_t}\right]\nn \\
	&\leq& K_1(m) Q^x(\zeta>t) + K_2(m)Q^x\left[\frac{1}{Y_t}\right]\nn \\
	&\leq& K_1(m) Q^x(\zeta>t) + \frac{K_2(m)}{x}\label{e:wubd},
	\eea
	where the second line follows since $\frac{g(y)}{y}$ is bounded on $(m,\infty)$ by the linear growth assumption and $g$ is continuous on $[0,m]$, and the third line is due to $\frac{1}{Y}$ being supermartingale since $1-1/x$ is a scale function for $Y$.  Note that the above immediately yields (\ref{e:contat0+}).
	
	Let us next  show that $w$ satisfies (\ref{e:contat0}) when $0$ is an accessible boundary under $P^x$. Indeed, $xw(s,x)=E^x[g(X_s)]$ implies that $(s,x)\mapsto xw(s,x)$ is jointly continuous due to the Feller property of $X$ and the continuity of paths\footnote{Feller property implies in particular that $(P^x)_{x \geq 0}$ is a continuous family of laws.}.  Moreover, $g(0)=0$ if $0$ is an accessible (and, therefore, absorbing) boundary by our hypothesis. Thus, $\lim_{x\rar 0} E^x[g(X_s)]=g(0)=0$ and the convergence is uniform on compact intervals, yielding  (\ref{e:contat0}).
	
	To show that $w$ also satisfies (\ref{w:bc}) it suffices to show in view of (\ref{e:wubd})  that
	\[
	\lim_{n \rar \infty} Q^{x_n}(\zeta>t_n)=0,
	\]
	which will hold if $Q^x(\zeta>t)$ tends to $0$ as $x \rar \infty$ for an arbitrary but fixed $t>0$. Indeed, since $Q^x(\zeta>t)$ is decreasing in $t$, we have, for any monotone sequence $(t_n)_{n \geq 1}$ with limit $s$, 
	\[
	\lim_{n \rar \infty} Q^{x_n}(\zeta>t_n) \leq \lim_{n \rar \infty} Q^{x_n}(\zeta>s \wedge t_1)=0.
	\]
	Motivated by the above define $w_0(t,x):=Q^x(\zeta>t)$ and pick $y>x$. Then
	\bean
	Q^x(\zeta>t)&=&Q^x(\zeta>t, T_y<t)+Q^x(\zeta>t, T_y>t)\\
	&=&E^x\left[\chf_{[T_y<t]}w_0(t-T_y,y)\right]+Q^x(T_y>t),
	\eean 
	by the strong Markov property of $Y$. Taking limits as $y \rar \infty$ we obtain
	\[
	Q^x(\zeta>t)=E^x\left[\chf_{[\zeta<t]}\lim_{y \rar \infty}w_0(t-T_y,y)\right]+Q^x(\zeta>t)
	\]
	since $T_y \rar \zeta$, $Q^x$-a.s. as $y \rar \infty$. Observe that the interchange of expectation and limit is justified by bounded convergence. However, the above readily implies that $\lim_{y \rar \infty}w_0(t-T_y,y)=0$ on $[\zeta <t]$. Note that $T_y<\zeta$ and $w_0(s,y)$ is decreasing in $s$ for fixed $y$. Thus, $w_0(t,y)\leq w_0(t-T_y,y)$ and we deduce that  $\lim_{y \rar \infty}w_0(t,y)=0$ since $Q^x(\zeta <t)>0$ for all $x>0$.
	
	Now, conversely assume that $w$ is a classical solution of (\ref{e:wpde})-(\ref{w:ic}) satisfying (\ref{e:contat0+})-(\ref{w:bc}). We shall  show that $w$ satisfies (\ref{e:w}). First note that (\ref{w:bc}) implies $w$ is bounded near infinity:
	\be \label{w:bdatinfty}
	\sup_{y>x, s\in [r,t]}w(r,y)<\infty, \;  \forall x>0 \mbox{ and } 0\leq r <t <\infty.
	\ee
	
	Next define $\tau_{n,m}:=\inf\{t:Y_t \notin (\frac{1}{m},n)\}=T_n \wedge T_{\frac{1}{m}}$, where $Y$ is a weak solution of (\ref{e:limitRT}), and observe that $\lim_{n \rar \infty}\lim_{m \rar \infty}\tau_{n,m}=\zeta$, $Q^x$-a.s. for every $x>0$. Fix a $T>0$ and let $f(t,x):=w(T-t,x)$. Employing It\^{o}'s formula and using the continuity of $\sigma$ and $w_x$ we get
	\[
	w(T,x)= Q^x\left[f(T\wedge \tau_{n,m}, Y_{T \wedge \tau_{n,m}})\right]=Q^x\left[\frac{g(Y_T)}{Y_T}\chf_{[T<\tau_{n,m}]}\right]+Q^x\left[w(T-\tau_{n,m}, Y_{\tau_{n,m}})\chf_{[T >\tau_{n,m}]}\right]
	\]
	since $Q^x(T=\tau_{n,m})=0$.
	
	Note that the first term of the summation converges to 
	\[
	Q^x\left[\frac{g(Y_T)}{Y_T}\chf_{[T<\zeta]}\right]
	\]
	by monotone convergence. 
	
	Let us first suppose that $0$ is an accessible boundary for $X$. Then
	\bean
	Q^x\left[w(T-\tau_{n,m}, Y_{\tau_{n,m}})\chf_{[T\geq \tau_{n,m}]}\right]&=&Q^x\left[w(T-T_{\frac{1}{m}}, \frac{1}{m})\chf_{[T\geq \tau_{n,m}]}\chf_{[T_n>T_{\frac{1}{m}}]}\right]\\
	&&+Q^x\left[w(T-T_n, n)\chf_{[T\geq \tau_{n,m}]}\chf_{[T_n<T_{1/m}]}\right]\\
	&\leq& K(m) m Q^x\left(T_n>T_{\frac{1}{m}}\right) +Q^x\left[w(T-T_n, n)\chf_{[T\geq \tau_{n,m}]}\chf_{[T_n<T_{\frac{1}{m}}]}\right],
	\eean
	where $K(m)$ is a constant satisfying $\lim_{m \rar \infty}K(m)=0$ due to (\ref{e:contat0}).  On the other hand, 
	\[
	\lim_{m \rar \infty} m Q^x\left(T_n>T_{\frac{1}{m}}\right)=m \frac{\frac{1}{x}-\frac{1}{n}}{m-\frac{1}{n}}=1.
	\]
	Thus,
	\[
	\lim_{m \rar \infty}Q^x\left[w(T-\tau_{n,m}, Y_{\tau_{n,m}})\chf_{[T\geq \tau_{n,m}]}\right]\leq Q^x\left[w(T-T_n, n)\chf_{[T\geq \tau_{n,\infty}]}\chf_{[T_n<T_{0}]}\right]
	\]
	by the dominated  convergence theorem. Recall that $T_0=\infty$ as $Y$ does not hit $0$. Thus,
	\[
	\lim_{n \rar \infty}\lim_{m \rar \infty}Q^x\left[w(T-\tau_{n,m}, Y_{\tau_{n,m}})\chf_{[T\geq \tau_{n,m}]}\right]\leq Q^x\left[\lim_{n \rar \infty}w(T-T_n, n)\chf_{[T\geq \zeta]}\right]
	\]
	by the dominated convergence theorem in view of (\ref{w:bdatinfty}) and the fact that  $T_n \rar \zeta$, $Q^x$-a.s..  Moreover, $T-\tau_n$ is decreasing to $T-\zeta$ on the set $T\geq \zeta$. Thus, it follows from  (\ref{w:bc}) that the above limit is $0$  since $Q^x(\zeta=T)=0$.  Hence, 
	\[
	w(t,x)=Q^x\left[\frac{g(Y_T)}{Y_T}\chf_{[T<\zeta]}\right]
	\]
	This completes the proof of uniqueness when $0$ is an accessible boundary for $X$ under $P^x$.
	
	To complete the proof let us now assume that $0$ is inaccessible, i.e. $X$ is strictly positive. This means that $\frac{1}{Y}$ is a true martingale under $Q^x$ in view of Proposition \ref{p:Yinvmart}.  The proof will be complete once we show again that 
	\[
	\lim_{n \rar \infty}\lim_{m \rar \infty}Q^x\left[w(T-\tau_{n,m}, Y_{\tau_{n,m}})\chf_{[T\geq \tau_{n,m}]}\right]=0.
	\]
	Indeed,
	\bea
	Q^x\left[w(T-\tau_{n,m}, Y_{\tau_{n,m}})\chf_{[T\geq \tau_{n,m}]}\right]&=&Q^x\left[w(T-T_{\frac{1}{m}}, \frac{1}{m})\chf_{[T\geq \tau_{n,m}]}\chf_{[T_n>T_{\frac{1}{m}}]}\right]\nn\\
	&&+Q^x\left[w(T-T_n, n)\chf_{[T\geq\tau_{n,m}]}\chf_{[T_n<T_{1/m}]}\right]\nn\\
	&\leq& K Q^x\left[\frac{1}{Y_{T_{\frac{1}{m}}}}\chf_{[T\geq \tau_{n,m}]}\chf_{[T_n>T_{\frac{1}{m}}]}\right]\nn\\
	&&+Q^x\left[w(T-T_n, n)\chf_{[T\geq\tau_{n,m}]}\chf_{[T_n<T_{1/m}]}\right]\nn\\
	&=&K Q^x\left[\frac{1}{Y_T}\chf_{[T\geq \tau_{n,m}]}\chf_{[T_n>T_{\frac{1}{m}}]}\right] \label{e:est1}\\
	&&+Q^x\left[w(T-T_n, n)\chf_{[T\geq\tau_{n,m}]}\chf_{[T_n<T_{1/m}]}\right] \label{e:est2},
	\eea
	where the first inequality is due to (\ref{e:contat0+}) and the second equality is a consequence of the martingale property of $\frac{1}{Y}$.
	
	Convergence of (\ref{e:est2}) to $0$ as $m$ and $n$ diverge to $\infty$ can be shown as before. Thus, it remains to show that (\ref{e:est1}) converges to $0$ as well. 
	
	Indeed, by the dominated  convergence theorem
	\[
	\lim_{m \rar \infty} Q^x\left[\frac{1}{Y_T}\chf_{[T\geq \tau_{n,m}]}\chf_{[T_n>T_{\frac{1}{m}}]}\right] = Q^x\left[\frac{1}{Y_T}\lim_{m \rar \infty}\chf_{[T\geq \tau_{n,m}]}\chf_{[T_n>T_{\frac{1}{m}}]}\right] =0
	\]
	for sufficiently large $n$ since $\lim_{m \rar \infty}\chf_{[T_n>T_{\frac{1}{m}}]}=\chf_{[T_n=\infty]}$ and $T^n<\infty$, $Q^x$-a.s. for any $n>x$ due to the fact that $Y$ is strictly positive and explodes in finite time, $Q^x$-a.s. by Proposition \ref{p:Xvshtr}.
	\section{Proof of Theorem \ref{t:OS}}
	\begin{itemize}
		\item[Step 1:] Let's first see that if $a$ is an accessible boundary under $P^x$ for some $x \in (l,r)$, then $\lim_{z \rar a} \frac{g(z)}{u^{\lambda}(z,y)(1+|s_h(z)|)}<\infty$. Indeed, since $g$ is continuous on $I$ and $a\in I$, $g(a)<\infty$. On the other hand, $\lim_{z\rar a}u^{\lambda}(z,y)=0$ (see Table 1 in \cite{MKpar}). Thus, a straightforward application of L'Hospital's rule yields $\lim_{z\rar a}u^{\lambda}(z,y)(1+|s_h(z)|)=\lim_{z\rar a}\frac{1}{u^{\lambda}_x(z,y)}>0$ since $a$ is a regular absorbing boundary (see, again, Table 1 in \cite{MKpar}). This in particular implies $\hat{g}(\tilde{s}(a))=G(\tilde{s}(a))$.
		\item[Step 2:] Note that $G$ is well-defined and bounded due to Assumption \ref{a:OS}. Let $Y=\tilde{s}(X)$ and observe that $Y$ is a local martingale under $\tilde{P}^x$ and $G$ is the least excessive majorant of $\hat{g}$ on $D:=(\tilde{s}(l),\tilde{s}(r))$ for $Y$. Note that by the continuity of $G$ we can extend it continuously to $\tilde{s}(\tilde{I})$, where $\tilde{I}$ is the state space of $X$ under $\tilde{P}_x$ for any $x \in (l,r)$. Moreover $G$ will be the smallest concave majorant  on $\tilde{s}(\tilde{I})$  of $\hat{g}(x)\chf_{x\in D}(x)$, which is lower semicontinuous on $\tilde{s}(\tilde{I})$.  Therefore, for $x \in (l,r)$
		\[
		\sup_{\tau}\tilde{E}^x[\hat{g}(\tilde{s}(X_{\tau}))\chf_{[\tau<\zeta]}]=G(\tilde{s}(x))
		\]
		by Theorem 1 on p. 124 of \cite{Shiryaev} or Proposition 3.3 in \cite{DK}. Moreover, Lemma 8 and Theorem 2 in Chapter 3 of \cite{Shiryaev} establish for any $\eps >0$ that  $\tilde{P}^x(\tau_{\eps}^*<\zeta)=1$ and $\tilde{E}^x[G(\tilde{s}(X_{\tau_{\eps}^*}))]=G(\tilde{s}(x))$, where 
		\[
		\tau_{\eps}^*:=\inf\{t\geq 0: \hat{g}(\tilde{s}(X_t))+\eps \geq G(\tilde{s}(X_t)) \}.
		\]
		\item[Step 3:] Let $v(x):=u^{\lambda}(x,y)\Big(1+\frac{u^{\lambda}(y,y)}{2}|s_h(x)|\Big)G(\tilde{s}(x))$ for $x \in I$. Observe that $v(a)$ is well-defined by the continuity whenever $a$ is an accessible boundary in view of Step 1 and that $v(x)\geq g(x)$ for all $x \in I$ by construction. Moreover, $
		\big(e^{-\lambda t}v(X_t)\big)$ is a $P^x$-supermartingale.  Also observe that $v(a)=g(a)$ whenever $a$ is an accessible boundary due to the construction of $G$ and Step 1. Thus, if $P^x(\zeta<\infty)=1$,  $\hat{g}(\tilde{s}(X_{\zeta}))=G(\tilde{s}(X_{\zeta})$ implying  $P^x(\tau_{\eps}^*<\zeta)=1$ by the continuity of $\hat{g}$ and $G$. On the other hand,  if $P^x(\zeta<\infty)=0$,
		\[
		P^x(\tau_{\eps}^*>t)=\frac{v(x)}{G(\tilde{s}(x))}\tilde{E}^x\Big[\frac{1}{{u^{\lambda}(X_{\tau_{\eps}^*},y)\left(1+c|s_h(X_{\tau_{\eps}^*})|\right)}}\chf_{[\tau_{\eps}^*>t]}\Big],
		\] 
		which converges to $0$ as $t \rar \infty$ by dominated convergence since $\tilde{P}^x(\tau_{\eps}^*<\zeta)=1$.  Thus, in view of Step 2, we obtain
		\be \label{e:epsopt}
		E^x[e^{-\lambda \tau_{\eps}^*}v(X_{\tau_{\eps}^*})]=\frac{v(x)}{G(\tilde{s}(x))}\tilde{E}^x[G(\tilde{s}(X_{\tau_{\eps}^*})]=v(x).
		\ee
		\item[Step 4:] The above in fact implies $E^x[e^{-\lambda \tau^*}v(X_{\tau^*})]=v(x)$. Indeed, letting $\rho_n:=\inf\{t\geq0: e^{-\lambda t}v(X_t)\geq n\}$, we have in view of (\ref{e:epsopt}) that
		\[
		v(x)=E^x[e^{-\lambda \tau_{\eps}^*}v(X_{\tau_{\eps}^*})]\leq E^x[e^{-\lambda \rho_n\wedge\tau_{\eps}^*}v(X_{\rho_n \wedge \tau_{\eps}^*})]
		\]
		since $\big(e^{-\lambda t}v(X_t)\big)$  is  a $P^x$-supermartingale.
		Thus, by  virtue of the dominated convergence theorem as $\eps\rar 0$, we deduce $v(x)\leq E^x[e^{-\lambda \rho_n\wedge\tau^*}v(X_{\rho_n \wedge \tau^*})]$. However, $E^x[e^{-\lambda \rho_n\wedge\tau^*}v(X_{\rho_n \wedge \tau^*})]$ converges to $E^x[e^{-\lambda \tau^*}v(X_{ \tau^*})]$ by the monotone convergence theorem as $n \rar \infty$. This shows $v(x)\leq E^x[e^{-\lambda \tau^*}v(X_{ \tau^*})]$, which in turn yields the claim as $v(x)\geq E^x[e^{-\lambda \tau^*}v(X_{ \tau^*})]$ by the supermartingale property of $\big(e^{-\lambda t}v(X_t)\big)$ and  Fatou's lemma.
		\item[Step 5:] We will now see that $V=v$. To this end let $\tau$ be an arbitrary stopping time and $\tau_n:=\inf\{t\geq 0: X_t \notin (a_n,b_n)\}$, where $l<a_n<b_n<r$ are such that $a_n \rar l$ and $b_n \rar r$ as $n \rar \infty$. Then, by Fatou's lemma  we obtain $E^x[e^{-\lambda \tau}g(X_{\tau})]\leq E^x[e^{-\lambda \tau\wedge \tau_n}g(X_{\tau\wedge \tau_n})]=\frac{v(x)}{G(\tilde{s}(x))}\tilde{E}^x\left[\tilde{g}(\tilde{s}(X_{\tau\wedge \tau_n}))\right]\leq v(x)$  in view of Step 2.
		
		On the other hand, 
		\be \label{e:epsoptV}
		E^x[e^{-\lambda\tau^*_{\eps}}g(X_{\tau^*_{\eps}})]=\frac{v(x)\tilde{E}^x[\tilde{g}(\tilde{s}(X_{\tau^*_{\eps}}))]}{G(\tilde{s}(x))}\geq \frac{v(x)\tilde{E}^x[G(\tilde{s}(X_{\tau^*_{\eps}}))-\eps]}{G(\tilde{s}(x))}=\frac{v(x)(G(\tilde{s}(x)-\eps)}{G(\tilde{s}(x))}.
		\ee
		By letting $\eps \rar 0$, we arrive at $V(x)\geq v(x)$, which in turn implies $V=v$. The fact that $V$ is finite is a consequence of Assumption \ref{a:OS}.
		\item[Step 6:] It remains to show that $\tau^*$ is optimal. Indeed, since $e^{-\lambda \tau_{\eps}^*}v(X_{\tau_{\eps}^*})$ converges to $e^{-\lambda \tau^*}v(X_{\tau^*})$ in $L^1(P^x)$ as observed in Step 4, and $v\geq g$, $(e^{-\lambda \tau_{\eps}^*}g(X_{\tau_{\eps}^*}))_{\eps >0}$ is a uniformly integrable family. Therefore, with the help of  (\ref{e:epsoptV}), we arrive at
		\[
		E^x\left[e^{-\lambda \tau^*}g(X_{\tau^*})\right]=\lim_{\eps \rar 0}E^x\left[e^{-\lambda \tau_{\eps}^*}g(X_{\tau_{\eps}^*})\right]\geq \lim_{\eps \rar 0} \frac{v(x)(G(\tilde{s}(x)-\eps)}{G(\tilde{s}(x))}=v(x).
		\]
	\end{itemize}
\end{document}